\documentclass[10pt,twoside]{amsart}
\setcounter{page}{1}

\usepackage{amsmath,amsfonts,amssymb,amsxtra,bm,setspace,xspace,graphicx,lmodern,psfrag,epsfig,color,latexsym,stmaryrd,scalerel,stackengine,enumerate,subcaption,caption}
\usepackage[colorlinks=true]{hyperref}\hypersetup{urlcolor=blue, citecolor=red}

\numberwithin{equation}{section}
\newtheorem{thm}{Theorem}[section]

\newtheorem{lem}[thm]{Lemma}

\newtheorem{rem}[thm]{Remark}

\newtheorem{exm}[thm]{Example}

\stackMath
\newcommand\reallywidehat[1]{%
	\savestack{\tmpbox}{\stretchto{%
			\scaleto{%
				\scalerel*[\widthof{\ensuremath{#1}}]{\kern-.6pt\bigwedge\kern-.6pt}%
				{\rule[-\textheight/2]{1ex}{\textheight}}
			}{\textheight}%
		}{0.5ex}}%
	\stackon[1pt]{#1}{\tmpbox}%
}
\newcommand{\vertiii}[1]{{\left\vert\kern-0.25ex\left\vert\kern-0.25ex\left\vert #1 
		\right\vert\kern-0.25ex\right\vert\kern-0.25ex\right\vert}}

\newcommand{\R}{{\mathbb R}}

\begin{document}
	
	\title[DG method for Vlasov-Stokes']{Discontinuous Galerkin methods for\\ the Vlasov-Stokes' system }
	
	\bibliographystyle{alpha}
	
	\author[Harsha Hutridurga]{Harsha Hutridurga}
	\address{H.H.: Department of Mathematics, Indian Institute of Technology Bombay, Powai, Mumbai 400076 India.}
	\email{hutri@math.iitb.ac.in}
	
	\author[Krishan Kumar]{Krishan Kumar}
	\address{K.K.: Department of Mathematics, Indian Institute of Technology Bombay, Powai, Mumbai 400076 India.}
	\email{krishankumar@math.iitb.ac.in}
	
	\author[Amiya K. Pani]{Amiya K. Pani}
	\address{A.K.P.: Department of Mathematics, Birla Institute of Technology and Science, Pilani, KK Birla Goa Campus, NH 17 B, Zuarinagar, Goa 403726 India.}
	\email{amiyap@goa.bits-pilani.ac.in}
	
	\date{\today}
	
	\maketitle
	
	\thispagestyle{empty}

	\begin{abstract}
		This paper develops and analyses semi-discrete numerical method for two dimensional Vlasov-Stokes' system with periodic boundary condition. The method is  based on coupling of semi-discrete discontinuous Galerkin method for the Vlasov equation with discontinuous Galerkin scheme for the stationary incompressible Stokes' equation. The proposed method is both mass and momentum conservative. Since it is difficult to establish non-negativity of the discrete local density, the generalized discrete Stokes' operator become non-coercive and indefinite and under smallness condition on the discretization parameter,  optimal error estimates  are established with help of a modified the Stokes' projection to deal with Stokes' part and with the help of a special projection to tackle the Vlasov part. Finally, numerical experiments based on the dG method combined with a splitting algorithm are performed.
	\end{abstract}
	
	
	\section{Introduction}
	In this paper, we study a mathematical model describing sprays and aerosols, bubbly flows or suspension and sedimentation phenomena in two dimensions. It is a coupled system of $2$D incompressible steady generalized Stokes' equation and a Vlasov type equation:
	\begin{equation}\label{eq:continuous-model}
		\left\{
		\begin{aligned}
			\partial_t f + v\cdot \nabla_x f + \nabla_v \cdot \Big( \left( \bm{u} - v \right) f \Big) & = 0  \qquad \qquad \mbox{ in }(0,T)\times\Omega_x\times\R^2,
			\\
			f(0,x,v) & = f_{0}(x,v)\quad  \mbox{ in }\Omega_x\times\R^2.
		\end{aligned}
		\right.
	\end{equation}
	\begin{equation}\label{contstokes}
		\left\{
		\begin{aligned}
			- \Delta_x \bm{u} + \rho \bm{u} +\nabla_x p & = \rho V  \quad \mbox{ in }\Omega_x,
			\\
			\nabla_x \cdot \bm{u} & = 0  \qquad  \mbox{ in }\Omega_x,
		\end{aligned}
		\right.
	\end{equation}	
	Here, $\Omega_x$ denotes the two dimensional torus $\mathbb{T}^2$. The fluid velocity of the background medium is denoted by $\bm{u}(t,x)$, and the pressure is denoted by $p(t,x)$ which are functions of space and time variables. The distribution function $f(t,x,v)$ describing the particle distribution depends on the time variable $t > 0$, the spatial variable $x \in \Omega_x$ and the velocity variable $v \in \R^2$. The local density of the particles is given by $\rho := \int_{\R^2} \;f\,{\rm d}v$.	
The coupling between the two systems is due to a drag force which is proportional to the relative velocity $(\bm{u} - v)$. This system is also important in biological applications, such as transport in the respiratory tract \cite{gemci2002numerical,baranger2005modeling}.
	
	The Vlasov-Stokes' system has been rigorously derived from the dynamics of a system of particles in a fluid by Jabin and Perthame \cite{jabin2000notes}. The existence of solution with its large time behaviour of a similar system has been studied by Hamdache \cite{Hamdache_1998}. In this work, authors take unsteady incompressible Stokes' equation as the fluid equation for the background medium. The main idea of the existence proof in \cite{Hamdache_1998} is to first regularize the main problem followed by an application of the Schauder fixed point theorem to arrive at a solution of the regularized problem. Finally, the regularized solution in the limiting case yields the existence of a weak solution to the original problem. One of the crucial steps in the proof is the use of non-negativity of $\rho$, which makes the generalized Stokes' system into a damped Stokes' problem leading to an application of the inf-sup condition. Based on the Banach fixed point argument and using $\rho \geq 0$ in a crucial way, H\"ofer \cite{hofer2018} has proved the existence of a unique weak solution to the problem \eqref{eq:continuous-model}-\eqref{contstokes}. In \cite{our}, authors proved the existence of a unique strong solution in $3$D using the Banach fixed point argument. In \cite{our}, the authors consider the unsteady Stokes' equation for the background medium. For some regularity result, see \cite{gasser2000regularity}. 	 
	
In this paper, we develop discontinuous Galerkin methods for both the equations (dG-dG). Our work is inspired by the works \cite{ayuso2009discontinuous} and \cite{de2012discontinuous} which dealt with the Vlasov-Poisson system. Recently, in \cite{hutridurga2023discontinuous}, a discontinuous Galerkin method for the Vlasov-viscous Burgers' system is developed. In those three papers, the authors have analysed conservation and stability properties of the discrete system and have also derived optimal error estimates with the help of a special projection operator. The Vlasov equation is a form of transport problem which conserves total mass. Out of the several numerical methods, discontinuous Galerkin method has the property to preserve the total mass of the discrete system and hence, we propose dG for the Vlasov equation. One can also choose local discontinuous Galerkin or any conforming numerical method for the Stokes' system. 
One of the main difficulty in proving results like existence and uniqueness as well as optimal error estimates for the semi-discrete system
is the currently unavailable results demonstrating the non-negativity of the discrete local density $\rho_h$ and hence, the corresponding generalized discrete Stokes' problem becomes non-coercive and indefinite. This creates serious difficulties in the analysis and therefore, a more refined analysis is needed to derive results under smallness condition on the discretizing parameter $h$. In the literature, there have been some numerical works for similar type of models in \cite{goudon2013asymptotic,goudon2014asymptotic}.

Our major contributions in this paper are as follows:
	\begin{itemize}
		\item Conservation properties and stability of the discrete system are analysed.
		\item Since the generalized discrete Stokes' operator fails to satisfy the inf-sup condition, a more in depth analysis helps to achieve the result under a smallness assumption on the discretizing parameter $h$.  
		\item Based on a modified generalized Stokes' projection and a special projection, optimal error estimates are derived for the semi-discrete system.
		\item Existence of a unique pair of solutions for the discrete system is established.
		\item Finally, using a splitting algorithm, we perform some numerical experiments.
	\end{itemize}
	The paper is structured as follows. In section \ref{cts}, we describe some properties of the continuum model. In section \ref{dg}, we introduce a semi-discrete numerical method followed by proving certain properties of the discrete solution. The error analysis for the semi-discrete method is detailed in section \ref{err}. Finally, in section \ref{comp}, some computational results are given. 
	\section{Qualitative and Quantitative aspects of the model problem}\label{cts}
	
Through out this paper, we use standard notation for Sobolev spaces. Denote by $W^{m,p}$ the $L^p$-Sobolev space of order $m \geq 0$.
 Set $L^2_0(\Omega_x) = \left\{g \in L^2(\Omega_x): \int_{\Omega_x}g\,{\rm d}x = 0\right\},$\\$ \bm{W^{m,p}} = \left(W^{m,p}(\Omega_x)\right)^2, \|\bm{u}\|_{\bm{L^\infty}} = \|u\|_{[L^\infty(\Omega_x)]^2},\|\bm{u}\|_{\bm{W^{m,p}}} = \|u\|_{[W^{m,p}(\Omega_x)]^2}  \, \, \forall\,\, m \geq 0,\,\, 1 \leq p \leq \infty$. We, further, denote a special class of divergence-free (in the sense of distribution) vector fields by
\[
\bm{J_1} = \left\{\bm{z \in W^{1,2}}: \nabla_x\cdot \bm{z} = 0, \bm{z} \,\, \mbox{is periodic} \right\}.
\] 
%
Set the local macroscopic velocity $V$ as
\[
 V(t,x) = \frac{1}{\rho}\int_{\R^2} f(t,x,v)v\,{\rm d}v.
\]	
We define some physical quantities:  mass and momentum, respectively as
\[
\int_{\R^2}\int_{\Omega_x} f(t,x,v)\,{\rm d}x\,{\rm d}v \quad \mbox{and}\quad
\int_{\R^2}\int_{\Omega_x} vf(t,x,v)\,{\rm d}x\,{\rm d}v.
\]
The energy and the velocity moments $m_kf$, for $k \geq 0$, respectively, as
\[
\int_{\R^2}\int_{\Omega_x}  |v|^2\; f(t,x,v)\,{\rm d}x\,{\rm d}v \quad \mbox{and}\quad
m_kf(t,x) := \int_{\R^2}|v|^kf(t,x,v)\,{\rm d}v.
\]
Throughout this manuscript, any function defined on $\Omega_x$ is assumed to be periodic in the $x$-variable.

The following properties hold for the solution $(f,\bm{u},p)$ of the Vlasov-Stokes' equations \eqref{eq:continuous-model}-\eqref{contstokes}, whose proof can be found in \cite{Hamdache_1998}.
\begin{itemize}
	\item  \textbf{(Positivity preserving)} For any given non-negative initial datum $f_0$, the solution $f$ is also non-negative, i.e., if $f_0(x,v) \geq 0$ then $f(t,x,v) \geq 0$. Further, the local density $\rho(t,x) \geq 0$.
	\item \textbf{(Mass conservation)} The distribution function $f$ conserves mass in the sense that
	\begin{equation}\label{contmass}
		\int_{\Omega_x}\int_{\R^2} f(t,x,v)\,{\rm d}v\,{\rm d}x = \int_{\Omega_x}\int_{\R^2} f_0(x,v)\,{\rm d}v\,{\rm d}x \quad \forall \,\, t \in [0,T].
	\end{equation}
	\item \textbf{(Total momentum conservation)} The distribution function $f$ conserves momentum in the sense that
	\begin{equation}\label{contmomentum}
		\int_{\Omega_x}\int_{\R^2} v\,f(t,x,v)\,{\rm d}v\,{\rm d}x = \int_{\Omega_x}\int_{\R^2} v\,f_0(x,v)\,{\rm d}v\,{\rm d}x \quad \forall \,\, t \in [0,T].
	\end{equation}
	\item \textbf{(Energy dissipation)} The energy of the system dissipates, i.e.,
	\begin{equation}\label{contenergy}
		\frac{{\rm d}}{{\rm d}t}\int_{\R^2}\int_{\Omega_x}|v|^2f(t,x,v)\,{\rm d}x\,{\rm d}v \leq 0, \quad \mbox{for any }\quad f_0(x,v) \geq 0.
	\end{equation}
\end{itemize} 

Multiplying equation \eqref{eq:continuous-model} by $|v|^2$, and equation \eqref{contstokes} by $2\bm{u}$ followed by an integration by parts shows
\begin{equation*}\label{ene}
	\begin{aligned}
		\int_{\R^2}\int_{\Omega_x} |v|^2f(t,x,v)\,{\rm d}x\,{\rm d}v &+ 2\int_0^t\int_{\Omega_x}|\nabla_{x}\bm{u}|^2\,{\rm d}x\,{\rm d}t 
		\\
		&+ 2\int_0^t\int_{\R^2}\int_{\Omega_x}|\bm{u} - v|^2f\,{\rm d}x\,{\rm d}v\,{\rm d}t = \int_{\R^2}\int_{\Omega_x} |v|^2\,f_0\,{\rm d}x\,{\rm d}v.
	\end{aligned}  
\end{equation*}
This is referred to as the energy identity.
As $f \geq 0$, the third term on the left hand side becomes non-negative and hence
\begin{equation*}
	\begin{aligned}
		\int_{\R^2}\int_{\Omega_x} |v|^2f(t,x,v)\,{\rm d}x\,{\rm d}v + 2\int_0^t\int_{\Omega_x}|\nabla_{x}\bm{u}|^2\,{\rm d}x\,{\rm d}t \leq \int_{\R^2}\int_{\Omega_x} |v|^2\,f_0\,{\rm d}x\,{\rm d}v.
	\end{aligned}  
\end{equation*}
Moreover, if $\int_{\Omega_x}\int_{\R^2}\,|v|^2f_0(x,v)\,{\rm d}v\,{\rm d}x < \infty$, then the above inequality yields
\begin{equation}
	\bm{u} \in L^2(0,T;\bm{J_1}).
\end{equation}
A use of the Sobolev inequality yields
\begin{equation}
	\bm{u} \in L^2(0,T;\bm{L^p}), \quad \forall \,\, 2 \leq p < \infty. 
\end{equation}


The following result shows integrability estimates on the local density and on the momentum. As these appear as source terms in the Stokes' equation, these estimates are crucial in deducing the regularity of solution to the Stokes' problem. The proof of the following result can be found in \cite[Lemma 2.2, p.56]{Hamdache_1998}.
\begin{lem}\label{density}
	Let $p \geq 1$. Let $\bm{u} \in L^2(0,T;\bm{L^{p+2}}), f_0 \in L^\infty(\Omega_x \times \R^2)\cap L^1(\Omega_x \times \R^2)$ and let
	\[
	\int_{\R^2}\int_{\Omega_x}\,|v|^pf_0\,{\rm d}x{\rm d}v <\infty.
	\] 
	Then, the local density $\rho$ and the momentum $\rho V$ satisfy the following:
	\begin{equation}\label{rhos}
		\rho \in L^\infty\left(0,T;L^\frac{p+2}{2}(\Omega_x)\right) \quad \mbox{and} \quad \rho V \in L^\infty\left(0,T; L^\frac{p+2}{3}(\Omega_x)\right).
	\end{equation}
\end{lem}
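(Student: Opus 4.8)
The plan is to deduce \eqref{rhos} from two ingredients: (i) a pointwise-in-$(t,x)$ interpolation between velocity moments of a non-negative $L^\infty$ function, and (ii) a propagation-in-time bound for the $p$-th moment $Y(t):=\int_{\Omega_x}\int_{\R^2}|v|^pf\,{\rm d}v\,{\rm d}x$ extracted directly from the Vlasov equation \eqref{eq:continuous-model}. Throughout, $f\ge 0$.

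First I would establish the interpolation step. Fixing $(t,x)$ and splitting the $v$-integral at radius $R>0$, for $0\le j\le p$ one gets
\[
m_jf(t,x) \;\le\; C\,R^{j+2}\,\|f(t)\|_{L^\infty} + R^{j-p}\,m_pf(t,x)
\]
(the exponent $j+2$ reflecting that the velocity space is two-dimensional), and optimising over $R$, i.e. choosing $R^{p+2}\sim m_pf(t,x)/\|f(t)\|_{L^\infty}$, yields
\[
m_jf(t,x) \;\le\; C\,\|f(t)\|_{L^\infty}^{\frac{p-j}{p+2}}\,\big(m_pf(t,x)\big)^{\frac{j+2}{p+2}}, \qquad 0\le j\le p.
\]
Taking $j=0$, raising to the power $\tfrac{p+2}{2}$ and integrating in $x$ bounds $\|\rho(t)\|_{L^{(p+2)/2}(\Omega_x)}^{(p+2)/2}$ by $C\|f(t)\|_{L^\infty}^{p/2}\,Y(t)$; since $|\rho V|\le m_1f$, the choice $j=1$ and the power $\tfrac{p+2}{3}$ bounds $\|\rho V(t)\|_{L^{(p+2)/3}(\Omega_x)}^{(p+2)/3}$ by $C\|f(t)\|_{L^\infty}^{(p-1)/3}\,Y(t)$. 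The appearance of $\|f(t)\|_{L^\infty}$ is harmless: since $\nabla_v\cdot(\bm{u}-v)=-2$, the method of characteristics gives $\|f(t)\|_{L^\infty}\le e^{2T}\|f_0\|_{L^\infty}$ on $[0,T]$. Thus everything reduces to bounding $Y$ uniformly on $[0,T]$.

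Next I would propagate $Y$. Multiplying \eqref{eq:continuous-model} by $|v|^p$ and integrating over $\Omega_x\times\R^2$, the transport term vanishes by $x$-periodicity, and an integration by parts in $v$ (using $\nabla_v|v|^p=p|v|^{p-2}v$) leaves
\[
Y'(t) \;=\; -\,pY(t) + p\int_{\Omega_x}\int_{\R^2}|v|^{p-2}(v\cdot\bm{u})\,f\,{\rm d}v\,{\rm d}x \;\le\; -\,pY(t) + p\int_{\Omega_x}|\bm{u}|\,m_{p-1}f\,{\rm d}x .
\]
I would then control the last term by Hölder's inequality with the conjugate pair $\big(p+2,\tfrac{p+2}{p+1}\big)$ followed by the $j=p-1$ case of the interpolation inequality, which gives $\|m_{p-1}f\|_{L^{(p+2)/(p+1)}(\Omega_x)}\le C\|f(t)\|_{L^\infty}^{1/(p+2)}\,Y(t)^{(p+1)/(p+2)}$, so that $Y'+pY\le C\|f(t)\|_{L^\infty}^{1/(p+2)}\|\bm{u}(t)\|_{\bm{L^{p+2}}}\,Y^{(p+1)/(p+2)}$. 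Substituting $Z:=Y^{1/(p+2)}$ linearises this Bernoulli-type inequality to
\[
Z'(t) + \frac{p}{p+2}\,Z(t) \;\le\; C\,\|f(t)\|_{L^\infty}^{\frac{1}{p+2}}\,\|\bm{u}(t)\|_{\bm{L^{p+2}}} ,
\]
and Grönwall's lemma, together with $\int_0^T\|\bm{u}(s)\|_{\bm{L^{p+2}}}\,{\rm d}s\le \sqrt{T}\,\|\bm{u}\|_{L^2(0,T;\bm{L^{p+2}})}<\infty$ and $Z(0)=\big(\int_{\Omega_x}\int_{\R^2}|v|^pf_0\big)^{1/(p+2)}<\infty$, gives the required uniform bound on $Y$, hence on $\rho$ and $\rho V$ in the stated spaces.

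The hard part is the moment-propagation step: the ``bad'' term $\int_{\Omega_x}|\bm{u}|\,m_{p-1}f\,{\rm d}x$ has to be absorbed by the dissipative term $-pY$ plus a forcing that is integrable in time, and this closes \emph{exactly} because $p+2$ and $\tfrac{p+2}{p+1}$ are Hölder conjugate while the moment interpolation produces the exponent $\tfrac{p+1}{p+2}<1$ on $Y$, so that $Z=Y^{1/(p+2)}$ renders the differential inequality linear; the hypothesis $\bm{u}\in L^2(0,T;\bm{L^{p+2}})$ is precisely what makes the forcing lie in $L^1(0,T)$. One also has to justify the formal integrations by parts in $v$, which is routine via a truncation/regularisation of $f$ in the velocity variable.
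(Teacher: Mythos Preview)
The paper does not supply its own proof of this lemma; it refers the reader to \cite[Lemma~2.2, p.~56]{Hamdache_1998}. Your argument is correct and is precisely the classical moment-interpolation / moment-propagation scheme found in that reference: the pointwise bound $m_jf\lesssim \|f\|_{L^\infty}^{(p-j)/(p+2)}(m_pf)^{(j+2)/(p+2)}$ (for $0\le j\le p$, in two velocity dimensions) reduces \eqref{rhos} to a uniform-in-time control of $Y(t)=\int |v|^p f$, and the Bernoulli-type differential inequality for $Y$, closed via H\"older with the conjugate pair $\big(p+2,\tfrac{p+2}{p+1}\big)$ together with the characteristics bound $\|f(t)\|_{L^\infty}\le e^{2t}\|f_0\|_{L^\infty}$, yields that control.
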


\begin{rem}
	Taking $p = 4$ in Lemma \ref{density} yields 
	\begin{equation}\label{rhos1}
		\rho \in L^\infty\left(0,T;L^3(\Omega_x)\right) \quad \mbox{and} \quad \rho V \in L^\infty\left(0,T; L^2(\Omega_x)\right).
	\end{equation}
	These integrability properties of $\rho$ and $\rho V$ in the Stokes' equation \eqref{contstokes} guarantee that $\bm{u} \in L^1(0,T;\bm{L^\infty})$ which plays a crucial role in arriving at an $L^\infty$ estimate on $\rho$.
\end{rem}
The following Lemma gives the $L^\infty$ estimate in time and space variable for the local density. The proof of this can be found in \cite[Proposition 4.6, p.44]{han2019uniqueness}. 
\begin{lem}\label{lem:density}
	Let $\bm{u} \in L^1(0,T;\bm{L^\infty})$ and let $\sup_{C^r_{t,v}}f_0 \in L^\infty_{loc}\left(\R_+;L^1(\R^2)\right)$, where $C^r_{t,v} := \Omega_x \times B(e^tv,r), \, \forall\, r > 0$. $B(e^tv,r)$ denote a ball of radius $r$ with center at $e^{t}v$. Then, the following estimate holds:
	\begin{equation*}
		\|\rho(t,x)\|_{L^{\infty}((0,T)\times\Omega_x)} \leq e^{2T}\sup_{t \in [0,T]}\|\sup_{C^r_{t,v}}f_0\|_{L^1(\R^2)}.
	\end{equation*}     	
\end{lem}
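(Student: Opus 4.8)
The plan is to prove the estimate by the method of characteristics applied to the Vlasov equation \eqref{eq:continuous-model}. First I would rewrite the transport equation in non-conservative form: since $\bm{u}=\bm{u}(t,x)$ does not depend on $v$ and $\nabla_v\cdot(-v)=-2$ in two dimensions, one has $\nabla_v\cdot\big((\bm{u}-v)f\big)=(\bm{u}-v)\cdot\nabla_v f-2f$, so that $f$ solves
\[
\partial_t f + v\cdot\nabla_x f + (\bm{u}-v)\cdot\nabla_v f = 2f .
\]
The associated characteristic curves $s\mapsto(\mathcal X(s),\mathcal V(s))$ are governed by $\dot{\mathcal X}=\mathcal V$, $\dot{\mathcal V}=\bm{u}(s,\mathcal X)-\mathcal V$, and along them $\tfrac{{\rm d}}{{\rm d}s}f(s,\mathcal X(s),\mathcal V(s))=2f(s,\mathcal X(s),\mathcal V(s))$. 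Fixing $(t,x,v)$ and integrating the characteristic system backward from the terminal data $(\mathcal X(t),\mathcal V(t))=(x,v)$ down to $s=0$ then yields the representation formula
\[
f(t,x,v) = e^{2t}\, f_0\big(\mathcal X(0;t,x,v),\,\mathcal V(0;t,x,v)\big).
\]

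Next I would pin down the location of the backward velocity characteristic. Solving the linear ODE for $\mathcal V$ with integrating factor $e^{s}$ gives $\mathcal V(0;t,x,v)=e^{t}v-\int_0^t e^{\tau}\bm{u}(\tau,\mathcal X(\tau))\,{\rm d}\tau$, hence
\[
\big|\mathcal V(0;t,x,v)-e^{t}v\big| \le \int_0^t e^{\tau}\|\bm{u}(\tau)\|_{\bm{L^\infty}}\,{\rm d}\tau \le e^{T}\|\bm{u}\|_{L^1(0,T;\bm{L^\infty})} =: r,
\]
which is finite by the hypothesis on $\bm{u}$. Since $\mathcal X(0)\in\Omega_x$ (the torus) and $\mathcal V(0)\in\overline{B(e^{t}v,r)}$, the representation formula gives the pointwise bound $f(t,x,v)\le e^{2t}\sup_{C^r_{t,v}}f_0$. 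Integrating in $v$ over $\R^2$ and using $e^{2t}\le e^{2T}$ for $t\in[0,T]$,
\[
\rho(t,x)=\int_{\R^2}f(t,x,v)\,{\rm d}v \le e^{2T}\int_{\R^2}\Big(\sup_{C^r_{t,v}}f_0\Big)\,{\rm d}v = e^{2T}\big\|\sup_{C^r_{t,v}}f_0\big\|_{L^1(\R^2)},
\]
and taking the supremum over $(t,x)\in(0,T)\times\Omega_x$ delivers the asserted inequality, provided $r$ is chosen at least $e^{T}\|\bm{u}\|_{L^1(0,T;\bm{L^\infty})}$ (consistent with the hypothesis being imposed for every $r>0$).

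The main obstacle is one of rigour rather than computation: with only $\bm{u}\in L^1(0,T;\bm{L^\infty})$, the drift field $(v,\bm{u}(t,x)-v)$ is not Lipschitz in $x$, so the characteristic flow and the representation formula are not licensed by classical Cauchy--Lipschitz theory. I would handle this by mollifying $\bm{u}$ in $x$, carrying out the argument above for the regularized problem to obtain the estimate with a constant independent of the regularization, and then passing to the limit using stability of regular Lagrangian flows / renormalized (DiPerna--Lions) solutions, which applies here because $\bm{u}$ is divergence-free in $x$; alternatively one may invoke the extra regularity of $\bm{u}$ coming from the Stokes' equation together with the integrability bounds of Lemma~\ref{density}. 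A secondary technical point to check is the decay of $f$ in $v$ that makes $\rho$ and the integration by parts in $v$ meaningful; this is inherited from the decay/support of $f_0$ transported along the characteristics.
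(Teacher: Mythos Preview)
The paper does not supply its own proof of this lemma; it simply cites \cite[Proposition~4.6]{han2019uniqueness}. Your argument via the method of characteristics is exactly the standard one and is correct: the non-conservative rewriting, the representation $f(t,x,v)=e^{2t}f_0(\mathcal X(0),\mathcal V(0))$, the explicit formula $\mathcal V(0)=e^{t}v-\int_0^t e^{\tau}\bm{u}(\tau,\mathcal X(\tau))\,{\rm d}\tau$, and the resulting localization $\mathcal V(0)\in B(e^tv,r)$ with $r=e^{T}\|\bm{u}\|_{L^1(0,T;\bm{L^\infty})}$ are all right, and the integration in $v$ gives precisely the stated bound with the factor $e^{2T}$ coming from the two-dimensional divergence $\nabla_v\cdot(-v)=-2$.

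Your caveat about the regularity of the flow is well taken but is more of an aside in the present setting: in the paper the estimate is applied to the solution of the coupled Vlasov--Stokes' system, for which the Stokes' equation together with Lemma~\ref{density} (and the elliptic regularity invoked elsewhere in the paper) yields far more than $\bm{u}\in L^1(0,T;\bm{L^\infty})$, so the classical characteristic argument suffices. If one insists on the minimal hypothesis as stated, the mollification/DiPerna--Lions route you outline is the correct justification.
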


	\section{Discontinuous Galerkin approximations}\label{dg}
	In this section, we design and analyze a discontinuous Galerkin scheme to approximate solutions to the continuum model \eqref{eq:continuous-model}-\eqref{contstokes}. Note that a compactly supported (in the $v$ variable) initial datum results in a compactly supported (in the $v$ variable) solution $f(t,x,v)$ to the Vlasov equation \eqref{eq:continuous-model} provided $t \in [0,T]$ where $T$ depends on the size of $f_0$ and the magnitude of the background fluid velocity $\bm{u}$. So, if we restrict ourselves to compactly supported (in the $v$ variable) data, we can assume without loss of generality that there exist constants $L > 0$ and $T > 0$ such that $\mbox{supp}f(t,x,\cdot) \subset \Omega_v =: [-L,L]^2$ for all $t \in [0,T]$ and $x \in \Omega_x$.     
	
	\noindent
	Let $\mathcal{T}^x_{h}$ and $\mathcal{T}^v_{h}$ be two shape regular and quasi-uniform families of Cartesian partitions of $\Omega_x$ and $\Omega_v$, respectively. Let $\{\mathcal{T}_{h}\}$ be defined as the Cartesian product of these two partitions, i.e.,
	\begin{align*}
		\mathcal{T}_{h} = \left\{ R = T^x \times T^v : T^x \in \mathcal{T}^x_{h}, \,  T^v \in \mathcal{T}^v_{h} \right\}.
	\end{align*}
	The mesh sizes $h$, $h_x$ and $h_v$ relative to these partitions are defined as follows:
	\[
	h_x := \max_{T^x \in \mathcal{T}^x_{h}} {\rm diam} (T^x);
	\quad
	h_v := \max_{T^v \in \mathcal{T}^v_{h}} {\rm diam} (T^v);
	\quad
	h := \max \left( h_x, h_v \right).
	\]
	We denote by $\Gamma_x$ and $\Gamma_v$ the set of all edges of the partitions $\mathcal{T}^x_{h}$ and $\mathcal{T}^v_{h}$, respectively and we set $\Gamma = \Gamma_x \times \Gamma_v$. The sets of interior and boundary edges of the partition $\mathcal{T}^x_{h}$ (respectively, $\mathcal{T}^v_{h}$) are denoted by $\Gamma^0_{x}$ (respectively, $\Gamma^0_{v}$) and $\Gamma^\partial_{x}$ (respectively, $\Gamma^\partial _{v}$), so that $\Gamma_x = \Gamma^0_{x} \cup \Gamma^\partial _{x}$ (respectively, $\Gamma_v = \Gamma^0_{v} \cup \Gamma^\partial _{v}$).
	
Our objective is to develop a discontinuous Galerkin scheme to approximate solutions to the continuum model. We will consider the following broken polynomial spaces:
	\begin{equation*}
		\begin{aligned}
			X_h &:= \left\{ \psi \in L^2(\Omega_x) : \psi\vert_{T^x} \in \mathbb{P}^k(T^x),\, \,\forall\, T^x \in \mathcal{T}_h^x \right\},
			\\
			V_h &:= \{\psi \in L^2(\Omega_v) : \psi\vert_{T^v} \in \mathbb{P}^k(T^v),\, \,\forall\, T^v \in \mathcal{T}_h^v\},
			\\
			\mathcal{Z}_h &:= \left\{ \phi \in L^2(\Omega_x\times\Omega_v) : \phi\vert_{R} \in \mathbb{P}^k(T^x) \times \mathbb{P}^k(T^{v}),\,  \forall R = T^x \times T^v \in \mathcal{T}_{h} \right\},
			\\
			U_h &:= \left\{ \bm{\psi} \in \bm{L^2} : \bm{\psi}\vert_{T^x} \in \left(\mathbb{P}^k(T^x)\right)^2,\, \,\forall\, T^x \in \mathcal{T}_h^x \right\},
			\\
			P_h &:= \left\{ \psi \in L^2_0(\Omega_x) : \psi\vert_{T^x} \in \mathbb{P}^k(T^x),\, \,\forall\, T^x \in \mathcal{T}_h^x \right\},
		\end{aligned}
	\end{equation*}	
	where $\mathbb{P}^k(T)$ is the space of scalar polynomials of degree at most $k$ in each variable. More precisely, we will approximate the background velocity $\bm{u}$ by $\bm{u_h} \in U_h$, the fluid pressure $p$ by $p_h \in P_h$, the distribution function $f$ by $f_h \in \mathcal{Z}_h$. 
	
 Let $\bm{n}^-$ and $\bm{n}^+$ be the outward and inward unit normal vectors on the element $T$. The interior trace $\psi^-$ of $\psi$ and the outer trace $\psi^+$ of $\psi$ on $\Gamma_x$ is defined by 
 \[
 \psi^{\pm}(x,\cdot) = \lim_{\epsilon \to 0}\psi(x\pm \epsilon \bm{n}^-,\cdot) \hspace{5mm}\forall \hspace{1mm} x \in \Gamma_x.
 \] 
 Following \cite{arnold2002unified,vemaganti2007discontinuous}, we set the average and jump of a scalar function $\psi$ and a vector-valued function $\bm{\Psi}$ at the edges as follows:
	\[
	\{\psi \} = \frac{1}{2}(\psi^- + \psi^+), \hspace{5mm} [\![\psi]\!] = \psi^- \bm{n}^- + \psi^+ \bm{n}^+ \hspace{5mm} \text{on} \hspace{1mm}  \Gamma^0_r, \hspace{3mm} r = x \hspace{1mm}\text{or}\hspace{1mm} v\, ,
	\]
	\[
	\{\bm{\Psi} \} = \frac{1}{2}(\bm{\Psi}^- + \bm{\Psi}^+), \hspace{5mm} [\![\bm{\Psi}]\!] = \bm{\Psi}^-\cdot \bm{n}^- + \bm{\Psi}^+\cdot \bm{n}^+ \hspace{5mm} \text{on} \hspace{1mm}  \Gamma^0_r, \hspace{3mm} r = x \hspace{1mm}\text{or}\hspace{1mm} v.
	\]
	For $0 \leq \delta \leq 1$ the weighted average of a vector valued function $\bm{\Psi}$ is defined by
	\[
	\{\bm{\Psi}\}_\delta := \delta \bm{\Psi}^+ + (1 - \delta)\bm{\Psi}^-.
	\]
	Note that for a fixed edge $\bm{n}^- = - \bm{n}^+$. For boundary edges, the jump and average are taken to be $[\![\psi]\!] = \psi\bm{n}$ and $\{\psi\} = \psi$.

	\noindent
	We will be considering the following discrete spaces:
	\[
	W^{m,p}(\mathcal{T}_h) = \left\{ \psi \in L^{p}(\Omega) : \psi_{\mid_R} \in W^{m,p}(R) \quad \forall R \in \mathcal{T}_h\right\} \qquad m\geq 0,\quad 1\leq p \leq \infty.
	\]
	We further denote $W^{m,2}(\mathcal{T}_h)$ by $H^m(\mathcal{T}_h)$ for $m \geq 1$.
	We define the following discrete norms: 
	\[
	 \|w\|_{m,\mathcal{T}_h} = \left(\sum_{R \in \mathcal{T}_h}\|w\|^2_{m,R}\right)^\frac{1}{2} \quad \forall\,w \in H^m(\mathcal{T}_h),\,\, m \geq 0
	\]
	\[
\|w\|_{L^p(\mathcal{T}_h)} = \left(\sum_{R \in \mathcal{T}_h}\|w\|^p_{L^p(R)}\right)^\frac{1}{p},\,\,\forall\,w \in L^p(\mathcal{T}_h), 
	\]
	for all $1 \leq p < \infty$ and $\|w\|_{L^\infty(\mathcal{T}_h)} = \mbox{esssup}_{w \in \mathcal{T}_h}\vert w \vert$. For $F \in \Gamma_x, w_h \in X_h$
	\[
	\|w_h\|^2_{0,F} = \int_{F}[\![w_h]\!]_F\cdot[\![w_h]\!]_F\,{\rm d}s(x).
	\]
	 For all $\left(\bm{w_h},q_h\right) \in U_h \times P_h$,
	\[
	\vertiii{\bm{w_h}}^2 = \|\nabla \bm{w_h}\|^2_{0,\mathcal{T}_h^x} + \sum_{F \in \Gamma_x}h^{-1}_x\|[\![\bm{w_h}]\!]\|^2_{L^2(F)},
	\]
	\[
	\vertiii{\left(\bm{w_h},q_h\right)}^2 = \vertiii{\bm{w_h}}^2 + \|q_h\|_{0,\mathcal{T}_h^x}^2 + \sum_{F \in \Gamma_x}h_x\|[\![q_h]\!]\|^2_{\bm{L^2}}.
	\]
	
	 Next, we recall certain function inequalities in the aforementioned discrete function spaces. These inequalities will be used in our subsequent analysis. 
		\begin{itemize}
			\item \textbf{Inverse inequality:} (see \cite[Lemma 1.44, p. 26]{2}) For any $w_h  \in X_h$, there exists a constant $C > 0$ such that 
			\begin{equation}\label{eq:inverse}
				\|\nabla_xw_h\|_{0,T^x} \leq Ch_x^{-1}\|w_h\|_{0,T^x} \quad \forall \,\, T^x \in \mathcal{T}_h^x.
			\end{equation}
			\item \textbf{Trace inequality:} (see \cite[Lemma 1.46, p. 27]{2}) If $w_h \in X_h$, then for all $F \in \Gamma_x$ and $T^x \in \mathcal{T}_h^x$ we have
			\begin{equation}\label{traceeqn}
				\|w_h\|_{0,F } \leq Ch_x^{-\frac{1}{2}}\|w_h\|_{0,T^x}.
			\end{equation}
			\item \textbf{Norm comparison:} (see \cite[Lemma 1.50, p. 29]{2}) Let $1\leq p,q \leq \infty$ and $w_h \in X_h$. Then, for all $T^x \in \mathcal{T}_h^x$
			\begin{equation}\label{normcompeqn}
				\|w_h\|_{L^p(T^x)} \leq Ch_x^{\frac{2}{p}-\frac{2}{q}}\|w_h\|_{L^q(T^x)}.
			\end{equation}
			\item A \textbf{Poincar\'e-Friedrichs} type inequality is proved in \cite[Lemma 2.1, p. 744]{arnold1982interior} which says that
			\[
			\|v_h\|_{0,\Omega_x} \leq C\vertiii{v_h}, \quad \forall \quad v_h \in H^1(\Omega_x).
			\]
			\item \textbf{Projection operators :} Let $k \geq 0$. Let $\mathcal{P}_x : L^2(\Omega_x) \rightarrow X_h$, $\bm{\mathcal{P}_x} : \bm{L^2} \rightarrow U_h, \mathcal{P}_v : L^2(\Omega_v) \rightarrow V_h$ and $\mathcal{P}_h : L^2(\Omega) \rightarrow \mathcal{Z}_h$ be the standard $L^2$-projections onto the spaces $X_h, U_h, V_h$ and $\mathcal{Z}_h$, respectively. Note that $\mathcal{P}_h = \mathcal{P}_x \otimes \mathcal{P}_v,$ (see \cite{ciarlet2002finite,agmon2010lectures}).\\
			By definition, $\mathcal{P}_h$ is stable in $L^2$ and it can be further shown to be stable in all $L^p$- norms (see \cite{crouzeix1987stability} for details). Let $1 \leq p \leq \infty$. Then for any $w \in L^p(\Omega)$
			\begin{equation}\label{L2stability}
				\|\mathcal{P}_h(w)\|_{L^p(\mathcal{T}_h)} \leq C\|w\|_{L^p(\Omega)}.
			\end{equation}
		\end{itemize}
	
Next, we give a result which relates $L^\infty$ bounds with $L^2$ bounds while approximating functions in aforementioned broken polynomial spaces. 
	\begin{lem}\label{L:uinf}
	Let $g \in W^{1,\infty}(\Omega_x) \cap H^{k+1}(\Omega_x)$. Then,
	\begin{equation*}\label{inftybound}
		\|g - g_h\|_{L^\infty(\mathcal{T}_h^x)} \lesssim h_x\|g\|_{W^{1,\infty}(\Omega_x)} + h^k_x\|g\|_{H^{k+1}(\Omega_x)} + h_x^{-1}\|g - g_h\|_{0,\mathcal{T}_h^x} \quad \forall \quad g_h \in X_h.
	\end{equation*}
\end{lem}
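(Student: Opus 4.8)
The plan is to compare $g$ and $g_h$ through the $L^2$-projection $\mathcal{P}_x g \in X_h$, which serves as a bridge between the approximation properties of the smooth function $g$ and the discrete (polynomial) nature of $g_h$. Fixing an arbitrary $g_h \in X_h$ and an arbitrary element $T^x \in \mathcal{T}_h^x$, the triangle inequality gives
\[
\|g - g_h\|_{L^\infty(T^x)} \leq \|g - \mathcal{P}_x g\|_{L^\infty(T^x)} + \|\mathcal{P}_x g - g_h\|_{L^\infty(T^x)},
\]
and I would estimate the first summand purely by approximation theory and the second by the inverse-type norm comparison \eqref{normcompeqn} between $L^\infty$ and $L^2$ on polynomials, after which maximising over $T^x \in \mathcal{T}_h^x$ produces the asserted global bound.

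For the first summand, let $\bar g$ denote the mean value of $g$ on $T^x$. Since $\mathcal{P}_x$ reproduces constants elementwise, $g - \mathcal{P}_x g = (g - \bar g) - \mathcal{P}_x(g - \bar g)$ on $T^x$; applying \eqref{normcompeqn} with $p = \infty$, $q = 2$ to the polynomial $\mathcal{P}_x(g - \bar g)\vert_{T^x}$, then the $L^2$-stability of $\mathcal{P}_x$, the elementary bound $|T^x|^{1/2} \lesssim h_x$ in two dimensions, and the Poincaré-type estimate $\|g - \bar g\|_{L^\infty(T^x)} \lesssim h_x\|\nabla_x g\|_{L^\infty(T^x)}$ valid on the convex cell $T^x$ for $g \in W^{1,\infty}$, I obtain
\[
\|g - \mathcal{P}_x g\|_{L^\infty(T^x)} \lesssim \|g - \bar g\|_{L^\infty(T^x)} + h_x^{-1}\|\mathcal{P}_x(g - \bar g)\|_{0,T^x} \lesssim \|g - \bar g\|_{L^\infty(T^x)} \lesssim h_x\|g\|_{W^{1,\infty}(T^x)}.
\]
For the second summand, $\mathcal{P}_x g - g_h \in X_h$ restricts to a polynomial on $T^x$, so \eqref{normcompeqn} (again with $p = \infty$, $q = 2$), the triangle inequality, and the standard optimal $L^2$-projection error estimate $\|g - \mathcal{P}_x g\|_{0,T^x} \lesssim h_x^{k+1}\|g\|_{H^{k+1}(T^x)}$ (see, e.g., \cite{ciarlet2002finite}) yield
\[
\|\mathcal{P}_x g - g_h\|_{L^\infty(T^x)} \lesssim h_x^{-1}\|\mathcal{P}_x g - g_h\|_{0,T^x} \leq h_x^{-1}\|g - \mathcal{P}_x g\|_{0,T^x} + h_x^{-1}\|g - g_h\|_{0,T^x} \lesssim h_x^k\|g\|_{H^{k+1}(T^x)} + h_x^{-1}\|g - g_h\|_{0,T^x}.
\]
Combining the two bounds, taking the maximum over $T^x \in \mathcal{T}_h^x$, and using $\max_{T^x}\|g\|_{W^{1,\infty}(T^x)} \leq \|g\|_{W^{1,\infty}(\Omega_x)}$, $\max_{T^x} h_x^k\|g\|_{H^{k+1}(T^x)} \leq h_x^k\|g\|_{H^{k+1}(\Omega_x)}$, and $\max_{T^x} h_x^{-1}\|g - g_h\|_{0,T^x} \leq h_x^{-1}\|g - g_h\|_{0,\mathcal{T}_h^x}$ finishes the argument.

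There is no serious obstacle; this is a routine local-to-global estimate. The only point requiring care is that all constants hidden in $\lesssim$ — those from \eqref{normcompeqn}, from the $L^2$-stability of $\mathcal{P}_x$, and from the projection error estimate — are independent of $h$, which is precisely where the shape-regularity and quasi-uniformity of the family $\{\mathcal{T}_h^x\}$ enter; on a non-uniform mesh one would instead carry the local mesh sizes $h_{T^x}$ through the computation and invoke quasi-uniformity only at the final maximisation step.
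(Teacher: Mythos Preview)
Your proof is correct and follows essentially the same route as the paper: split through $\mathcal{P}_x g$, use the $L^\infty$ projection estimate for the first piece (the paper simply cites this, whereas you supply a short mean-value argument), and for the second piece apply the norm comparison \eqref{normcompeqn} followed by a triangle inequality and the $L^2$ projection estimate. The only difference is the level of detail in justifying $\|g-\mathcal{P}_x g\|_{L^\infty}\lesssim h_x\|g\|_{W^{1,\infty}}$.
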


\begin{proof}
	Observe that
	\begin{align*}
		\|g - g_h\|_{L^\infty(\mathcal{T}_h^x)} &\leq \|g - \mathcal{P}_xg\|_{L^\infty(\mathcal{T}_h^x)} + \|\mathcal{P}_xg - g_h\|_{L^\infty(\mathcal{T}_h^x)}
		\\
		&\lesssim h_x\|g\|_{W^{1,\infty}(\Omega_x)} + h^{-1}_x\|\mathcal{P}_xg - g_h\|_{0,\mathcal{T}_h^x}
		\\
		&\lesssim h_x\|g\|_{W^{1,\infty}(\Omega_x)} + h^{-1}_x\|g - \mathcal{P}_xg\|_{0,\mathcal{T}_h^x} + h^{-1}_x\|g - g_h\|_{0,\mathcal{T}_h^x}
		\\
		&\lesssim h_x\|g\|_{W^{1,\infty}(\Omega_x)} + h^k_x\|g\|_{H^{k+1}(\Omega_x)} + h^{-1}_x\|g - g_h\|_{0,\mathcal{T}_h^x}.
	\end{align*}
Here, in the second step, we have employed the projection estimate \cite{wahlbin2006superconvergence} for the first term and the norm comparison estimate \eqref{normcompeqn} is employed for the second term. In the last step, the projection estimate is used for the second term.
\end{proof}


\begin{rem}
	A similar result to Lemma \ref{L:uinf} also hold for functions on the phase space $\Omega$. More precisely, let $\tilde{g} \in W^{1,\infty}(\Omega)\cap H^{k+1}(\Omega)$. Then, 
	\begin{equation*}
		\|\tilde{g} - \tilde{g}_h\|_{L^\infty(\mathcal{T}_h)} \lesssim h\|\tilde{g}\|_{W^{1,\infty}(\Omega)} + h^k\|\tilde{g}\|_{H^{k+1}(\Omega)} + h^{-1}\|\tilde{g} - \tilde{g}_h\|_{0,\mathcal{T}_h} \quad \forall \quad \tilde{g}_h \in \mathcal{Z}_h.
	\end{equation*}
\end{rem}

	\subsection{Semi-discrete dG formulation}
	
Our approximations scheme is to find $(f_h,\bm{u_h},p_h)(t) \in \mathcal{Z}_h\times U_h \times P_h$, for $t \in [0,T]$ such that\\
	\begin{equation}{\label{bh}}
		\left(\partial_t f_h,\psi_h\right) + \sum_{R \in \mathcal{T}_h}\mathcal{B}_{h,R}(\bm{u_h} ; f_{h},\psi_{h}) = 0  \hspace{2mm} \forall \,\,\psi_{h} \in \mathcal{Z}_h,
	\end{equation}
\begin{equation}\label{ch1}
	a_h(\bm{u_h},\bm{\phi_h}) + b_h\left(\bm{\phi_h},p_h\right) + \left(\rho_h\bm{u_h},\bm{\phi_h}\right) = \left(\rho_hV_h, \bm{\phi_h}\right) \quad \forall\,\, \bm{\phi_h} \in U_h,
\end{equation}
\begin{equation}\label{ch2}
	-b_h(\bm{u_h},q_h) + s_h(p_h,q_h) = 0 \quad \forall\,\,q_h \in P_h,
\end{equation}
	with the initial datum $f_h(0) = \mathcal{P}_h(f_0) \in \mathcal{Z}_h$. Here, similar to the continuum setting, we have set the discrete local density $\rho_h$ and the discrete local macroscopic velocity $V_h$ associated with $f_h$ as follows: 
	\begin{equation}\label{rh}
		\rho_h = \sum_{T^v \in \mathcal{T}^v_{h}}\int_{T^v}f_h\,{\rm d}v, \quad \mbox{and}\quad	V_h = \frac{1}{\rho_h}\sum_{T^v \in \mathcal{T}^v_{h}}\int_{T^v}vf_h\,{\rm d}v.
	\end{equation}
In the above formulation, \eqref{bh} corresponds to the Vlasov equation and \eqref{ch1}-\eqref{ch2} correspond to the incompressible Stokes' equations.\\
In \eqref{bh}, we have used the following notation:
	\begin{equation}\label{bhdef}
		\begin{aligned}
			\mathcal{B}_{h,R}(\bm{u_h};f_h,\psi_h) & :=  - \int_{R}f_hv\cdot\nabla_x\psi_h \, {\rm d}v\, {\rm d}x + \int_{T^v}\int_{\partial T^x}\reallywidehat{v\cdot\bm{n}f_h}\psi_h\, {\rm d}s(x)\, {\rm d}v
			\\
			& \quad - \int_{R}f_h(\bm{u_h} - v)\cdot\nabla_v\psi_h \, {\rm d}v\, {\rm d}x +\int_{T^x}\int_{\partial T^v} \reallywidehat{(\bm{u_h} - v)\cdot\bm{n}f_h}\psi_h \, {\rm d}s(v)\, {\rm d}x
		\end{aligned}
	\end{equation}
wherein the numerical fluxes are taken to be
\begin{equation}\label{flux}
		\left\{
		\begin{aligned}
			\widehat{v\cdot \bm{n}f_h} &= \{vf_h\}_\alpha\cdot\bm{n} := \left(\{vf_h\} + \frac{|v\cdot \bm{n}|}{2}[\![f_h]\!]\right)\cdot\bm{n}  \hspace{5mm} \text{on}\hspace{1mm} \Gamma^0_{x} \times T^v,
			\\
			\reallywidehat{(\bm{u_h} - v)\cdot \bm{n}f_h} &= \{(\bm{u_h} - v)f_h\}_\beta\cdot\bm{n} :=  \left(\{(\bm{u_h} - v)f_h\} + \frac{|(\bm{u_h} - v)\cdot \bm{n}|}{2}[\![f_h]\!]\right)\cdot \bm{n}  \hspace{3mm} \text{on}\hspace{1mm} T^x \times \Gamma^0_{v},
		\end{aligned}
		\right.
	\end{equation}
with $\bm{n} = \bm{n}^-, \alpha = \frac{1}{2}(1\pm\mbox{sign}(v\cdot\bm{n}^\pm))$ and $\beta = \frac{1}{2}(1\pm\mbox{sign}((\bm{u_h}-v)\cdot\bm{n}^\pm))$. For the more details about weighted average refer \cite{brezzi2004discontinuous}. On the boundary edges $e \in \Gamma_r^\partial, r = x,v$, we impose periodicity for $\reallywidehat{v\cdot\bm{n}f_h}$ and the compactness of support for $\reallywidehat{\left(\bm{u_h} - v\right)\cdot\bm{n}f_h}$. \\
In \eqref{ch1}, the bilinear form $a_h(\bm{u_h},\bm{\phi_h})$ stands for 
	\begin{equation*}{\label{aih}}
		\begin{aligned}
			a_h(\bm{u_h},\bm{\phi_h})  &= \sum_{i=1}^2a_{h,i}(u_{h,i},\phi_{h,i})
		\end{aligned}
\end{equation*}
where, $u_{h,1}, u_{h,2}$ and $\phi_{h,1}, \phi_{h,2}$ denote the Cartesian components of $\bm{u_h}$ and $\bm{\phi_h}$, respectively with
\begin{equation*}
	\begin{aligned}
		a_{h,i}(u_{h,i},\phi_{h,i}) &= \sum_{T^x\in \mathcal{T}^x_{h}}\int_{T^x}\nabla{u_{h,i}}\cdot\nabla\phi_{h,i}\,{\rm d}x + \sum_{F\in \Gamma_x}\int_F\frac{\vartheta}{h_x}[\![u_{h,i}]\!]\cdot[\![\phi_{h,i}]\!]\,{\rm d}s(x)
			\\
			& -\sum_{F\in \Gamma_x}\int_F\left(\{\nabla u_{h,i}\}\cdot [\![\phi_{h,i}]\!] + [\![u_{h,i}]\!]\cdot\{\nabla\phi_{h,i}\}\right)\,{\rm d}s(x).
		\end{aligned}
	\end{equation*} 	
Here, $\vartheta > 0$ is a penalty parameter. In \eqref{ch1}-\eqref{ch2}, $b_h(\cdot,\cdot)$ stands for
	\begin{equation}{\label{bhh}}
		\begin{aligned}
			b_h(\bm{u_h},q_h) = -\sum_{T^x\in \mathcal{T}^x_{h}}\int_{T^x} q_h\nabla\cdot  \bm{u_h}\,{\rm d}x + \sum_{F\in \Gamma_x}\int_F[\![\bm{u_h}]\!]\{q_h\}\,{\rm d}s(x)
		\end{aligned}
	\end{equation}	
and	the stabilization term in \eqref{ch2} is given by
	\begin{equation}\label{sh}
		\begin{aligned}
			s_h(p_h,q_h) = \sum_{F\in\ \Gamma_x}h_x\int_F[\![p_h]\!]\cdot[\![q_h]\!]\,{\rm d}s(x).
		\end{aligned}
	\end{equation}	
	
Note that \eqref{ch1}-\eqref{ch2} is equivalent to 
\begin{equation}\label{ch}	
	\mathcal{A}_{h}((\bm{u_h},p_h),(\bm{\phi_h},q_h))\, + \,\left(\rho_h\bm{u_h}, \bm{\phi_h}\right) = \left(\rho_h V_h, \bm{\phi_h}\right) \hspace{5mm} \forall\hspace{1mm}(\bm{\phi_h},q_h) \in U_h\times P_h,
\end{equation}
where
\begin{equation}\label{A_h}
	\mathcal{A}_{h}((\bm{u_h},p_h),(\bm{\phi_h},q_h)) = a_h(\bm{u_h,\phi_h}) + b_h(\bm{\phi_h},p_h) - b_h(\bm{u_h},q_h) + s_h(p_h,q_h).
\end{equation}	
We call \eqref{ch} as the discrete generalized Stokes' system due to the presence of the term involving $\rho_h$.

	It turns out that the bilinear form $a_h(\bm{u_h},\bm{\phi_h})$ is coercive in $\vertiii{\cdot}$-norm (for proof, see \cite[Lemma 4.12, p. 129]{2}), i.e. there exists a constant $\beta > 0$, independent of $h$, such that
	\begin{equation}\label{acoercive}
		\beta\vertiii{\bm{u_h}}^2 \leq a_h(\bm{u_h},\bm{u_h}).
	\end{equation}	
	Furthermore, the bilinear form $\mathcal{A}_h((\bm{u_h},p_h),(\bm{\phi_h},q_h))$ satisfies the discrete inf-sup stability condition and is bounded in the $\vertiii{(\cdot,\cdot)}$-norm, i.e., there exist constants $\alpha > 0$ and $C > 0$, independent of $h$, such that
	\begin{equation}\label{dic1}
		\alpha\vertiii{(\bm{u_h},p_h)} \leq \sup_{(\bm{\phi_h},q_h) \in U_h\times P_h\setminus\{(0,0)\}}\frac{\mathcal{A}_h((\bm{u_h},p_h),(\bm{\phi_h},q_h))}{\vertiii{(\bm{\phi_h},q_h)}}
	\end{equation}
	(see \cite[Lemma 6.13, p. 253]{2}) and 
	\begin{equation}\label{ahbd}
		|\mathcal{A}_h((\bm{u_h},p_h)(\bm{\phi_h},q_h))| \leq C\vertiii{(\bm{u_h},p_h)}\vertiii{(\bm{\phi_h},q_h)}.
	\end{equation}

\begin{rem}
	At present we are unable to show that the discrete local density is non-negative, i.e. $\rho_h \geq 0$. Hence we are unable to demonstrate that the bilinear form on the left hand side of the discrete generalized Stokes' system \eqref{ch}, that is,
	$$\mathcal{A}_{h}((\bm{u_h},p_h),(\bm{\phi_h},q_h))\, + \,\left(\rho_h\bm{u_h}, \bm{\phi_h}\right)$$
	satisfies the inf-sup condition. This makes the existence proof more involved. Hence, in sub section \ref{sec:exst}, we revisit the existence and  uniqueness result for the discrete system, where we show that \eqref{bh} and \eqref{ch} has a unique solution provided $h$ is small enough.
\end{rem}

	\subsection{Qualitative properties of the discrete solution}\label{dgp}
	
\begin{lem}\label{lem:distcon}
	Let  $(f_h,\bm{u_h},p_h) \in C^1([0,T];\mathcal{Z}_h\times U_h\times P_h)$ be the dG-dG approximation obtained by solving the discrete system \eqref{bh} and \eqref{ch} with initial datum $f_h(0) = \mathcal{P}_h(f_0)$. Then, for all $t  \geq 0$,
	\begin{equation}\label{distmass}
		 \int_\Omega f_h(t,x,v)\,{\rm d}x\,{\rm d}v = \int_\Omega f_0(x,v)\,{\rm d}x\,{\rm d}v \quad  \mbox{when} \quad k \geq 0
	\end{equation}
\begin{equation}\label{distmomentum}
	\int_\Omega vf_h(t,x,v)\,{\rm d}x\,{\rm d}v = \int_\Omega vf_0(x,v)\,{\rm d}x\,{\rm d}v\quad  \mbox{when} \quad k \geq 1
\end{equation}
\begin{equation}\label{distenergy}
		\begin{aligned}
			\frac{1}{2}\int_\Omega\vert v\vert^2 f_h(t,x,v)\,{\rm d}x\,{\rm d}v + \int_0^t a_h(\bm{u_h}(s),\bm{u_h}(s))\,{\rm d}s + \int_0^ts_h(p_h(s),p_h(s))\,{\rm d}s 
			\\+ \int_0^t\int_\Omega\vert\bm{u_h}(s) - v\vert^2f_h(s,x,v)\,{\rm d}x\,{\rm d}v\,{\rm d}s 
			\\= \frac{1}{2}\int_\Omega\vert v\vert^2f_0(x,v)\,{\rm d}x\,{\rm d}v \quad \mbox{when} \quad k \geq 2.
		\end{aligned}
	\end{equation} 
\end{lem}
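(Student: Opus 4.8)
The plan is to test the discrete Vlasov equation \eqref{bh} with carefully chosen polynomial test functions $\psi_h$ and exploit the structure of the numerical fluxes \eqref{flux}. For the mass conservation \eqref{distmass}, I would take $\psi_h \equiv 1$, which is admissible in $\mathcal{Z}_h$ for every $k \geq 0$. Then $\nabla_x \psi_h = \nabla_v \psi_h = 0$, so the two volume terms in $\mathcal{B}_{h,R}$ vanish, and the interior-edge flux terms telescope: when summed over all elements $R$, each interior edge is visited twice with opposite normals $\bm{n}^- = -\bm{n}^+$, and since the numerical fluxes $\widehat{v\cdot\bm{n}f_h}$ and $\reallywidehat{(\bm{u_h}-v)\cdot\bm{n}f_h}$ are single-valued on each edge, the contributions cancel pairwise. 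The boundary edges in $\Gamma_x^\partial$ cancel by periodicity, and those in $\Gamma_v^\partial$ vanish by the compact-support assumption $\mathrm{supp}\, f_h(t,x,\cdot) \subset \Omega_v$. Hence $\sum_R \mathcal{B}_{h,R}(\bm{u_h};f_h,1) = 0$, so $\frac{d}{dt}\int_\Omega f_h \,\mathrm{d}x\,\mathrm{d}v = 0$, and integrating in time together with $f_h(0) = \mathcal{P}_h(f_0)$ (whose integral equals that of $f_0$ since constants are in $\mathcal{Z}_h$) gives the result.

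\textbf{Momentum conservation.} For \eqref{distmomentum} I would test componentwise with $\psi_h = v_j$, $j = 1,2$, which requires $k \geq 1$ so that the linear function $v_j$ lies in $\mathcal{Z}_h$. Now $\nabla_x \psi_h = 0$ kills the first volume term and (after telescoping as above) the $\Gamma_x^0$ flux term, while $\nabla_v \psi_h = e_j$ is constant. The surviving terms are $-\int_R f_h(\bm{u_h}-v)\cdot e_j\,\mathrm{d}v\,\mathrm{d}x$ summed over $R$, plus the $\Gamma_v^0$ flux term $\int_{T^x}\int_{\partial T^v} \reallywidehat{(\bm{u_h}-v)\cdot\bm{n}f_h}\, v_j\,\mathrm{d}s(v)\,\mathrm{d}x$. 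The latter does \emph{not} fully telescope because $v_j$ is not single-valued-then-cancelling in the naive way; instead I would combine the volume term and the edge term via a discrete integration-by-parts (summation by parts) identity in the $v$-variable, using that the flux is consistent and conservative, to rewrite the sum as $-\int_\Omega f_h (\bm{u_h}-v)_j\,\mathrm{d}x\,\mathrm{d}v$ exactly (the jump/upwind part $\tfrac{1}{2}|(\bm{u_h}-v)\cdot\bm{n}|[\![f_h]\!]$ contracted against $\{v_j\}$ on an edge cancels with its twin since $[\![f_h]\!]$ flips sign while $\{v_j\}$ and $|(\bm{u_h}-v)\cdot\bm{n}|$ do not, and similarly the central part telescopes). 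One then observes $\int_\Omega f_h(\bm{u_h}-v)_j\,\mathrm{d}x\,\mathrm{d}v = \int_{\Omega_x} (\rho_h \bm{u_h} - \rho_h V_h)_j\,\mathrm{d}x$ by the definitions \eqref{rh}, and this vanishes upon testing the discrete Stokes momentum equation \eqref{ch1} with the constant vector $\bm{\phi_h} = e_j \in U_h$, because $a_h(\bm{u_h},e_j) = 0$ and $b_h(e_j,p_h) = 0$ (constants have zero gradient and zero jumps). Therefore $\frac{d}{dt}\int_\Omega v_j f_h\,\mathrm{d}x\,\mathrm{d}v = 0$, and the conclusion follows as before.

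\textbf{Energy identity.} For \eqref{distenergy} I would test \eqref{bh} with $\psi_h = \tfrac{1}{2}|v|^2$, which needs $k \geq 2$. The $x$-transport volume term vanishes since $\nabla_x(|v|^2) = 0$, and the $\Gamma_x^0$ flux term telescopes to zero. For the $v$-terms, $\nabla_v(\tfrac12|v|^2) = v$, so the volume term contributes $-\int_\Omega f_h(\bm{u_h}-v)\cdot v\,\mathrm{d}x\,\mathrm{d}v$ after the same summation-by-parts bookkeeping on the $\Gamma_v^0$ edges (the upwind dissipation term now pairs $\tfrac12|(\bm{u_h}-v)\cdot\bm{n}|[\![f_h]\!]$ against $\{\tfrac12|v|^2\}$ and cancels across each edge). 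Hence $\frac{d}{dt}\,\tfrac12\int_\Omega |v|^2 f_h\,\mathrm{d}x\,\mathrm{d}v = \int_\Omega f_h(\bm{u_h}-v)\cdot v\,\mathrm{d}x\,\mathrm{d}v$. Next I would test the discrete Stokes system: take $\bm{\phi_h} = \bm{u_h}$ in \eqref{ch1} and $q_h = p_h$ in \eqref{ch2} and add, to obtain $a_h(\bm{u_h},\bm{u_h}) + s_h(p_h,p_h) + (\rho_h\bm{u_h},\bm{u_h}) = (\rho_h V_h,\bm{u_h})$, i.e. $a_h(\bm{u_h},\bm{u_h}) + s_h(p_h,p_h) = \int_\Omega f_h(\bm{u_h}-v)\cdot\bm{u_h}\,\mathrm{d}x\,\mathrm{d}v$ using \eqref{rh}. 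Subtracting, $\frac{d}{dt}\,\tfrac12\int_\Omega|v|^2 f_h\,\mathrm{d}x\,\mathrm{d}v + a_h(\bm{u_h},\bm{u_h}) + s_h(p_h,p_h) = -\int_\Omega f_h(\bm{u_h}-v)\cdot(\bm{u_h}-v)\,\mathrm{d}x\,\mathrm{d}v = -\int_\Omega |\bm{u_h}-v|^2 f_h\,\mathrm{d}x\,\mathrm{d}v$, and integrating from $0$ to $t$ gives \eqref{distenergy}.

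\textbf{Main obstacle.} The routine parts are the telescoping of interior $x$-edge fluxes and the boundary cancellations; the delicate point is the discrete integration-by-parts in the $v$-variable connecting the volume term $-\int_R f_h(\bm{u_h}-v)\cdot\nabla_v\psi_h$ with the upwind flux term $\int_{\partial T^v}\reallywidehat{(\bm{u_h}-v)\cdot\bm{n}f_h}\,\psi_h$, verifying that the artificial-diffusion (jump) contribution contracted against $\{\psi_h\}$ genuinely cancels edge-by-edge for $\psi_h \in \{1, v_j, \tfrac12|v|^2\}$ and that no boundary-of-$\Omega_v$ term survives. One has to be careful that $\psi_h = v_j$ or $|v|^2$ restricted to a cell is in $\mathbb{P}^k$ so that it is a legitimate test function — this is exactly where the thresholds $k \geq 1$ and $k \geq 2$ enter — and that $\bm{u_h}$, though only piecewise polynomial in $x$, is constant in $v$, so all $v$-edge manipulations treat it as a frozen coefficient. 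I would organize this as a short lemma on the flux identity $\sum_R \mathcal{B}_{h,R}(\bm{u_h};f_h,\psi_h) = -\int_\Omega f_h(\bm{u_h}-v)\cdot\nabla_v\psi_h\,\mathrm{d}x\,\mathrm{d}v$ valid whenever $\psi_h$ is globally continuous in $v$ (which the three chosen test functions are, being smooth), and then the three conservation statements follow by plugging in.
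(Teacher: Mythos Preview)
Your approach is correct and coincides with the paper's: the paper does not spell out a proof but refers to \cite[Lemmas 3.3--3.5]{hutridurga2023discontinuous}, where exactly this strategy---testing \eqref{bh} with $\psi_h=1$, $\psi_h=v_j$, $\psi_h=\tfrac12|v|^2$ and combining with \eqref{ch1}--\eqref{ch2} tested by $\bm{\phi_h}=e_j$ and $(\bm{\phi_h},q_h)=(\bm{u_h},p_h)$---is carried out.

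Two small clean-ups: (i) In the momentum step you overstate the difficulty of the $\Gamma_v^0$ flux term. Since $v_j$ (and likewise $\tfrac12|v|^2$) is \emph{globally continuous} in $v$, its jump vanishes on every interior $v$-edge; the numerical flux being single-valued (conservative) then gives $\widehat{F}\cdot\bm{n}^-\psi_h^- + \widehat{F}\cdot\bm{n}^+\psi_h^+ = 0$ edge-by-edge, so these terms telescope exactly as in the mass case---no separate integration-by-parts lemma is needed. Your ``flux identity'' at the end is precisely this observation. (ii) In the energy step you have a sign slip: testing \eqref{ch1}--\eqref{ch2} with $(\bm{u_h},p_h)$ gives $a_h(\bm{u_h},\bm{u_h})+s_h(p_h,p_h) = -\int_\Omega f_h(\bm{u_h}-v)\cdot\bm{u_h}$, not $+$; you then \emph{add} (not subtract) this to the Vlasov identity to obtain \eqref{distenergy}. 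Your final displayed equation is correct, only the intermediate narration needs fixing.
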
	

The identities \eqref{distmass} and \eqref{distmomentum} are the discrete version of the mass and momentum conservation properties \eqref{contmass} and \eqref{contmomentum}, respectively, that hold true in the continuum setting . The identity \eqref{distenergy} is the discrete version of the energy identity \eqref{contenergy} from the continuum setting. However, unlike the continuum setting, we are unable to prove that the discrete system \eqref{bh} and \eqref{ch} has the positive preserving property. Hence, we do not, at present, have a discrete version of energy dissipation \eqref{contenergy}. 

The proof of the identities in the Lemma \ref{lem:distcon} are quite straightforward. A similar set of identities were proved for a dG scheme approximating solutions to the Vlasov-viscous Burgers' system. Please consult \cite[Lemmas $3.3 - 3.5$]{hutridurga2023discontinuous} for proofs.   

	Next, we prove a stability estimate for the dG scheme \eqref{bh} approximating solutions to the Vlasov equation.
	
	\begin{lem}\label{fhbd}
		Let $k \geq 0$ and let $f_h$ be the dG approximation to $f,$ satisfying $\eqref{bh}.$ Then, 
		\begin{equation*}
			\max_{t \in [0,T]}\|f_h\|_{0,\mathcal{T}_h} \leq e^{T}\|f_0\|_{0,\mathcal{T}_h} \quad \mbox{for any} \quad T > 0. 
		\end{equation*}
	\end{lem}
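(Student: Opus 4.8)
The plan is to test the dG scheme \eqref{bh} with the choice $\psi_h = f_h$ and track the evolution of $\tfrac12\|f_h(t)\|_{0,\mathcal{T}_h}^2$. With this choice the time-derivative term becomes $\tfrac12\frac{\mathrm d}{\mathrm dt}\|f_h\|_{0,\mathcal{T}_h}^2$, so the crux is to bound $\sum_{R}\mathcal{B}_{h,R}(\bm{u_h};f_h,f_h)$ from below. The idea is the standard ``energy'' estimate for upwind dG: summing the volume terms $-\int_R f_h v\cdot\nabla_x f_h$ and $-\int_R f_h(\bm{u_h}-v)\cdot\nabla_v f_h$ over all elements, integrating by parts on each element, and reassembling with the numerical flux contributions on the edges, the transport-in-$x$ part (whose field $v$ is divergence-free in $x$) and the upwind stabilization in $x$ combine to a nonnegative quantity, while the transport-in-$v$ part picks up the divergence $\nabla_v\cdot(\bm{u_h}-v) = -2$ (in two velocity dimensions), producing a term $+\int_R |f_h|^2$ per element after the symmetrization, i.e. $+\|f_h\|_{0,\mathcal{T}_h}^2$ overall, plus a nonnegative upwind-jump term in $v$.

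Concretely, the first step is the elementwise integration-by-parts identity: for a scalar $g$ and a (possibly discontinuous, elementwise smooth) vector field $\bm{a}$,
\[
-\int_R g\,\bm{a}\cdot\nabla_x g \,{\rm d}x\,{\rm d}v = \tfrac12\int_R (\nabla_x\cdot\bm{a})\,g^2\,{\rm d}x\,{\rm d}v - \tfrac12\int_{\partial R} (\bm{a}\cdot\bm{n})\,(g^-)^2 \,{\rm d}s,
\]
applied once with $\bm{a}=v$ (so $\nabla_x\cdot v = 0$) on the $x$-faces and once with $\bm{a}=\bm{u_h}-v$ (so $\nabla_v\cdot(\bm{u_h}-v)=-2$ since $\bm{u_h}$ is independent of $v$) on the $v$-faces. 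The second step is to add, on each interior edge, the flux terms coming from \eqref{flux}; using $\{g\}g^- - \tfrac12(g^-)^2 = \tfrac12\{g\}\,[\![g]\!]\cdot\bm{n}$-type algebra together with the identity $[\![g^2/2]\!]$-telescoping across the two elements sharing the edge, the jump penalties $\tfrac{|v\cdot\bm{n}|}{2}[\![f_h]\!]$ and $\tfrac{|(\bm{u_h}-v)\cdot\bm{n}|}{2}[\![f_h]\!]$ yield exactly the nonnegative contributions $\tfrac12\sum_{\Gamma^0}\int |v\cdot\bm{n}|\,[\![f_h]\!]^2$ and $\tfrac12\sum_{\Gamma^0}\int |(\bm{u_h}-v)\cdot\bm{n}|\,[\![f_h]\!]^2$ (the convective parts on interior edges cancel by $\bm{n}^- = -\bm{n}^+$). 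The boundary edges in $x$ contribute zero by periodicity, and those in $v$ contribute zero because $f_h$ is supported away from $\partial\Omega_v$. Collecting everything gives
\[
\sum_{R}\mathcal{B}_{h,R}(\bm{u_h};f_h,f_h) = \|f_h\|_{0,\mathcal{T}_h}^2 + (\text{nonnegative jump terms}) \ \ge\ \|f_h\|_{0,\mathcal{T}_h}^2 .
\]
Wait — the sign: the $\nabla_v\cdot(\bm{u_h}-v) = -2$ term gives $\tfrac12(-2)\int_R f_h^2 = -\int_R f_h^2$ on the \emph{left} side of the IBP identity, hence when moved to $\mathcal{B}_{h,R}$ it appears as $+\int_R f_h^2$; so indeed $\sum_R\mathcal{B}_{h,R}(\bm{u_h};f_h,f_h)\ge \|f_h\|_{0,\mathcal{T}_h}^2$, which only \emph{helps}. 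Actually for a lower bound of the form needed we just need $\sum_R\mathcal{B}_{h,R}(\bm{u_h};f_h,f_h) \ge -C\|f_h\|_{0,\mathcal{T}_h}^2$, and here we even get a favorable sign; to be safe and to match the stated constant $e^{T}$ one keeps the crude bound $\tfrac12\frac{\mathrm d}{\mathrm dt}\|f_h\|_{0,\mathcal{T}_h}^2 \le \|f_h\|_{0,\mathcal{T}_h}^2$, so in fact the rate is governed by the velocity dimension and $e^{T}$ is comfortable (even $e^{T/2}$-ish bounds would follow, but $e^T$ is what is claimed).

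The final step is Gr\"onwall: from $\frac{\mathrm d}{\mathrm dt}\|f_h\|_{0,\mathcal{T}_h}^2 \le 2\|f_h\|_{0,\mathcal{T}_h}^2$ one gets $\|f_h(t)\|_{0,\mathcal{T}_h}^2 \le e^{2t}\|f_h(0)\|_{0,\mathcal{T}_h}^2$, and since $f_h(0)=\mathcal{P}_h(f_0)$ with $\|\mathcal{P}_h(f_0)\|_{0,\mathcal{T}_h}\le\|f_0\|_{0,\mathcal{T}_h}$ by $L^2$-stability of the projection \eqref{L2stability}, taking square roots and the maximum over $[0,T]$ yields $\max_{t\in[0,T]}\|f_h\|_{0,\mathcal{T}_h}\le e^{T}\|f_0\|_{0,\mathcal{T}_h}$. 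The main obstacle is purely bookkeeping: carefully reassembling the element-boundary integrals into interior-edge jump/average expressions and checking that the upwind penalty terms land with the correct (nonnegative) sign, while confirming that the $\bm{u_h}$-dependent convective term in $\nabla_v$ contributes nothing to the divergence (as $\bm{u_h}=\bm{u_h}(t,x)$ is $v$-independent) — this is exactly the computation already carried out for the Vlasov--viscous Burgers' system in \cite[Lemmas 3.3--3.5]{hutridurga2023discontinuous} and for the Vlasov--Poisson system in \cite{ayuso2009discontinuous}, so one may either reproduce it or cite it. No smallness of $h$ and no information about the sign of $\rho_h$ is needed here, since the Stokes' coupling enters only through $\bm{u_h}$ in the transport field and that field's $v$-divergence is unaffected by it.
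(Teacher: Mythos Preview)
Your approach is exactly the paper's: test \eqref{bh} with $\psi_h=f_h$, integrate by parts on each element, reassemble the interior-edge contributions into nonnegative upwind jump terms using $[\![f_h^2]\!]=2\{f_h\}[\![f_h]\!]$, and close with Gr\"onwall. One correction to your sign bookkeeping in the ``Wait --- the sign'' paragraph: the divergence contribution $\nabla_v\cdot(\bm{u_h}-v)=-2$ lands in $\sum_R\mathcal{B}_{h,R}(\bm{u_h};f_h,f_h)$ with a \emph{minus} sign, i.e.\ $\sum_R\mathcal{B}_{h,R}(\bm{u_h};f_h,f_h)=-\|f_h\|_{0,\mathcal{T}_h}^2+(\text{nonneg jumps})$, exactly as in the paper's identity \eqref{aa}; there is no ``moving to the other side'' since $-\int_R f_h(\bm{u_h}-v)\cdot\nabla_v f_h$ is already a term of $\mathcal{B}_{h,R}$. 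With the correct sign one gets $\tfrac12\tfrac{{\rm d}}{{\rm d}t}\|f_h\|_{0,\mathcal{T}_h}^2=\|f_h\|_{0,\mathcal{T}_h}^2-(\text{nonneg})\le\|f_h\|_{0,\mathcal{T}_h}^2$, so the $e^T$ growth is not a ``crude'' relaxation but the actual rate produced by this computation; your final Gr\"onwall step and use of $\|\mathcal{P}_h f_0\|_{0,\mathcal{T}_h}\le\|f_0\|_{0,\mathcal{T}_h}$ are correct.
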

	
	\begin{proof}
		Choosing $\psi_h = f_h$ in \eqref{bh} yields
		\begin{equation*}
			\begin{aligned}
				&\frac{1}{2}\frac{{\rm d}}{{\rm d}t}\|f_h\|^2_{0,\mathcal{T}_h} - \sum_{R \in \mathcal{T}_h}\left( \frac{1}{2}\int_{R}\left(v\cdot \nabla_xf_h^2 + (\bm{u_h} - v)\cdot\nabla_vf_h^2\right)\,{\rm d}x\,{\rm d}v\right) 
				\\
				&-\sum_{T^v \in \mathcal{T}_h^v}\int_{T^v}\int_{\Gamma_x}\{vf_h\}_\alpha\cdot[\![f_h]\!]\,{\rm d}s(x)\,{\rm d}v - \sum_{T^x \in \mathcal{T}^x_{h}}\int_{T^x}\int_{\Gamma_v}\{(\bm{u_h} - v)f_h\}_\beta\cdot[\![f_h]\!]\,{\rm d}s(v)\,{\rm d}x = 0.
			\end{aligned}
		\end{equation*}
		After applying integration by parts in the second and in the third terms and after using the identity $[\![f_h^2]\!] = 2\{f_h\}[\![f_h]\!]$ with periodic boundary condition in $x$ and compact support in $v$, it follows that
		\begin{equation}\label{aa}
			\begin{aligned}
				&\frac{1}{2}\frac{{\rm d}}{{\rm d}t}\|f_h\|^2_{0,\mathcal{T}_h} - \|f_h\|^2_{0,\mathcal{T}_h} + \sum_{T^v \in \mathcal{T}^v_{h}}\int_{T^v}\int_{\Gamma_x^0}\frac{\mid v\cdot \bm{n}\mid}{2}[\![f_h]\!]\cdot[\![f_h]\!]\,{\rm d}s(x)\,{\rm d}v 
				\\
				&+ \sum_{T^x \in \mathcal{T}^x_{h}}\int_{T^x}\int_{\Gamma_v^0}\frac{\mid (\bm{u_h} - v)\cdot \bm{n}\mid}{2}[\![f_h]\!]\cdot[\![f_h]\!]\,{\rm d}s(v)\,{\rm d}x = 0.
			\end{aligned}
		\end{equation}
	Observe that the last two terms on the left hand side of the above equality are non-negative and hence can be dropped. An integration in time yields the desired result.
	\end{proof}
	\noindent
	As a consequence of Lemma \ref{fhbd}, it follows 
	\begin{equation}\label{rho1}
		\|\rho_h\|_{0,\mathcal{T}^x_{h} }\leq C(T)L\|f_0\|_{0,\mathcal{T}_h}, \quad \mbox{and} \quad \|\rho_hV_h\|_{0,\mathcal{T}^x_{h} }\leq C(T)L^2\|f_0\|_{0,\mathcal{T}_h}.
	\end{equation}
	
	
	\noindent
	For our subsequent use, we need the following lemma.
	\begin{lem}\label{rhoh2}
		Let $f_h$ be the dG approximation to the continuum solution $f$, obtained by solving \eqref{bh}. Let $\rho_h, V_h$ be the discrete local density, the discrete local macroscopic velocity associated with $f_h$ defined as in \eqref{rh}. Let $\rho, V$ be the local density, the local macroscopic velocity associated with $f$. Then   
		\begin{equation*}
			\begin{aligned}
				\|\rho - \rho_h\|_{0,\mathcal{T}^x_{h}} &\leq 2L\|f - f_h\|_{0,\mathcal{T}_h}, 
				\\
				\|\rho - \rho_h\|_{L^\infty(\mathcal{T}^x_{h})} &\leq 4L^2\|f - f_h\|_{L^\infty(\mathcal{T}_h)},
				\\
				\|\rho V - \rho_h V_h\|_{0,\mathcal{T}^x_{h}} &\leq 4L^2\|f - f_h\|_{0,\mathcal{T}_h}.
			\end{aligned}
		\end{equation*}		
	\end{lem}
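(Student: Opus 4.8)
The plan is to estimate each of the three quantities directly from the integral definitions of $\rho,\rho_h,\rho V,\rho_h V_h$, exploiting that all integrations in the velocity variable are over the bounded box $\Omega_v = [-L,L]^2$ and that $f_h$ (being in $\mathcal{Z}_h$ with mesh $\mathcal{T}_h^v$ covering $\Omega_v$) as well as $f$ are supported in $\Omega_x\times\Omega_v$. The key observation is that $\rho - \rho_h = \int_{\Omega_v}(f - f_h)\,{\rm d}v$ pointwise in $x$ (the sum over $T^v\in\mathcal{T}_h^v$ in the definition of $\rho_h$ just reassembles the integral over $\Omega_v$), so everything reduces to controlling an integral in $v$ over a set of measure $|\Omega_v| = (2L)^2 = 4L^2$.

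First, for the $L^2$ bound: fix $x$ and apply Cauchy--Schwarz in $v$ over $\Omega_v$, giving $|\rho(x) - \rho_h(x)|^2 \le |\Omega_v|\int_{\Omega_v}|f - f_h|^2\,{\rm d}v = 4L^2\int_{\Omega_v}|f-f_h|^2\,{\rm d}v$; integrating over $\Omega_x$ and summing over the $x$-cells gives $\|\rho - \rho_h\|_{0,\mathcal{T}_h^x}^2 \le 4L^2\|f - f_h\|_{0,\mathcal{T}_h}^2$, hence the stated bound with constant $2L$ after taking square roots. Second, for the $L^\infty$ bound: $|\rho(x) - \rho_h(x)| \le \int_{\Omega_v}|f - f_h|\,{\rm d}v \le |\Omega_v|\,\|f - f_h\|_{L^\infty(\mathcal{T}_h)} = 4L^2\|f - f_h\|_{L^\infty(\mathcal{T}_h)}$, uniformly in $x$, which is exactly the claim. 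Third, for the momentum term, write $\rho V - \rho_h V_h = \int_{\Omega_v} v(f - f_h)\,{\rm d}v$ (again the sum over $T^v$ reassembles the integral, and $\rho V = \int v f\,{\rm d}v$, $\rho_h V_h = \int v f_h\,{\rm d}v$ by the definitions), note $|v| \le \sqrt{2}\,L$ on $\Omega_v$ — or more crudely each component is bounded by $L$ — and repeat the Cauchy--Schwarz argument: $|\rho V(x) - \rho_h V_h(x)| \le \big(\int_{\Omega_v}|v|^2\,{\rm d}v\big)^{1/2}\big(\int_{\Omega_v}|f-f_h|^2\,{\rm d}v\big)^{1/2}$, and since $\big(\int_{\Omega_v}|v|^2\,{\rm d}v\big)^{1/2} \le \sqrt{2}L\cdot 2L = 2\sqrt 2 L^2 \le 4L^2$, integrating in $x$ yields $\|\rho V - \rho_h V_h\|_{0,\mathcal{T}_h^x} \le 4L^2\|f - f_h\|_{0,\mathcal{T}_h}$.

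There is really no substantive obstacle here; the only point requiring a line of care is the bookkeeping that $\rho_h$ and $\rho_h V_h$, defined cellwise as sums over $T^v\in\mathcal{T}_h^v$, coincide with the honest integrals $\int_{\Omega_v}f_h\,{\rm d}v$ and $\int_{\Omega_v}vf_h\,{\rm d}v$, and that $f$ vanishes outside $\Omega_v$ so that $\rho = \int_{\Omega_v}f\,{\rm d}v$ as well — once this is noted the three estimates are immediate consequences of the Cauchy--Schwarz inequality and the bound $|\Omega_v| = 4L^2$. I would also remark that the constants are deliberately not sharp (e.g. one could use $\sqrt{|\Omega_v|} = 2L$ in place of $2L$ and $2\sqrt2 L^2$ in place of $4L^2$); the stated forms are chosen for uniformity and suffice for the error analysis in Section~\ref{err}.
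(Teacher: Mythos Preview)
Your proof is correct and follows exactly the approach indicated in the paper, which states that the estimates ``are a mere consequence of the Cauchy--Schwarz inequality'' and refers to \cite[Lemma~3.7]{hutridurga2023discontinuous} for details. Your write-up in fact supplies precisely those details: the pointwise identities $\rho-\rho_h=\int_{\Omega_v}(f-f_h)\,{\rm d}v$ and $\rho V-\rho_hV_h=\int_{\Omega_v}v(f-f_h)\,{\rm d}v$, followed by Cauchy--Schwarz (or the trivial $L^1$--$L^\infty$ bound for the second estimate) on the box $\Omega_v$ of measure $4L^2$.
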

The estimate in the above lemma are a mere consequence of the Cauchy-Schwarz inequality. A similar set of estimates were proved for a dG scheme approximating solutions to the Vlasov-viscous Burgers' system. Please consult \cite[Lemma $3.7$]{hutridurga2023discontinuous} for the proofs.	
%

	\section{A priori Error Estimates}\label{err}
	This section discusses some a priori error estimates for the discrete solution.
	\subsection{Error estimates for Stokes' system}
	
	The error equation for the Stokes' problem is
	\begin{equation*}\label{er}
		\begin{aligned}
			\mathcal{A}_{h}\left((\bm{u - u_h},p - p_h),(\bm{\phi_h},q_h)\right) + \left(\rho\bm{u} - \rho_h\bm{u_h}, \bm{\phi_h}\right) = \left(\rho V - \rho_h V_h, \bm{\phi_h}\right).
		\end{aligned}
	\end{equation*}
	To estimate $\bm{u - u_h}$, we rewrite it as
	\begin{equation*}
		\begin{aligned}
			\bm{u} - \bm{u_h} &= \bm{u} - \bm{\widetilde{u}_h} + \bm{\tilde{u}_h} - \bm{u_h}
			= \bm{u} -\bm{\mathcal{P}_xu} + \bm{\mathcal{P}_xu} - \bm{\widetilde{u}_h} + \bm{\widetilde{u}_h} - \bm{u_h}.
		\end{aligned}
	\end{equation*}
	Here, $\bm{\widetilde{u}_h}$ is the solution of an auxiliary problem \eqref{ads} given below. Projection estimate gives the estimate of $\|\bm{u} - \bm{\mathcal{P}_xu}\|_{\bm{L^2}}$. So, to estimate $\|\bm{u} - \bm{u_h}\|_{\bm{L^2}}$, it is sufficient to estimate $\|\bm{\mathcal{P}_xu} - \bm{\widetilde{u}_h}\|_{\bm{L^2}}$ and $\|\bm{\widetilde{u}_h} - \bm{u_h}\|_{\bm{L^2}}$. We will obtain these estimates in Lemmas \ref{tb1} and \ref{tb}, respectively.

	We now consider the following auxiliary discrete Stokes' problem for the unknown $(\bm{\tilde{u}_h},p_h) \in U_h\times P_h$:  
	\begin{equation}\label{ads}
		\mathcal{A}_{h}((\bm{\widetilde{u}_h},\widetilde{p}_h),(\bm{\phi_h},q_h)) + \left(\rho\bm{\widetilde{u}_h}, \bm{\phi_h}\right) = \left(\rho V, \bm{\phi_h}\right) \hspace{5mm} \forall\hspace{1mm}(\bm{\phi_h},q_h) \in U_h\times P_h,
	\end{equation}	
	where, $\mathcal{A}_{h}((\bm{\widetilde{u}_h},\widetilde{p}_h),(\bm{\phi_h},q_h))$ is defined as in \eqref{A_h}. Note that the above auxiliary problem differs from the original discrete generalized Stokes'system \eqref{A_h} in that the discrete local density $\rho_h$ and discrete local momentum $\rho_hV_h$ are replaced by the local density $\rho$ and the local momentum $\rho V$ associated with the continuum solution $f$.


	\begin{lem}\label{exs}
		The auxiliary discrete Stokes' problem \eqref{ads} satisfies the discrete inf-sup condition, i.e. there exists a constant $\alpha > 0$, independent on $h$, such that
		\begin{equation}\label{dic}
			\alpha\vertiii{(\bm{\widetilde{u}_h},\widetilde{p}_h)} \leq \sup_{(\bm{\phi_h},q_h) \in U_h\times P_h\setminus\{(0,0)\}}\frac{\mathcal{A}_h((\bm{\widetilde{u}_h},\widetilde{p}_h),(\bm{\phi_h},q_h)) + \left(\rho\bm{\widetilde{u}_h}, \bm{\phi_h}\right)}{\vertiii{(\bm{\phi_h},q_h)}}.
		\end{equation}
	Furthermore, we have
		\begin{equation}\label{ahbd1}
			|\mathcal{A}_h((\bm{\widetilde{u}_h},\widetilde{p}_h),(\bm{\phi_h},q_h)) + \left(\rho\bm{\widetilde{u}_h, \phi_h}\right)| \lesssim \vertiii{(\bm{\widetilde{u}_h},\widetilde{p}_h)}\vertiii{(\bm{\phi_h},q_h)}.
		\end{equation}
	\end{lem}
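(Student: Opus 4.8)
The plan is to adapt the standard proof of the discrete inf-sup condition \eqref{dic1} for $\mathcal{A}_h$ (cf. the argument behind \cite[Lemma 6.13, p. 253]{2}), using two structural facts about the continuum density recalled earlier: $\rho(t,\cdot)\geq 0$ for every $t\in[0,T]$ (positivity preserving) and $\rho\in L^\infty((0,T)\times\Omega_x)$ (Lemma \ref{lem:density}). The point is that, relative to \eqref{dic1}, the only new object in \eqref{ads} is the bilinear form $(\rho\,\cdot\,,\,\cdot\,)$ on $U_h\times U_h$, which is \emph{positive semi-definite} --- $(\rho\bm{\phi_h},\bm{\phi_h})=\int_{\Omega_x}\rho\,|\bm{\phi_h}|^2\,{\rm d}x\geq0$ --- and is bounded, with $h$-independent constant, by $\lesssim M\vertiii{\bm{\phi_h}}\vertiii{\bm{\psi_h}}$ for $\bm{\phi_h},\bm{\psi_h}\in U_h$, where $M:=\|\rho\|_{L^\infty((0,T)\times\Omega_x)}$ and one uses Cauchy--Schwarz together with the Poincar\'e--Friedrichs inequality. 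Hence the perturbed $(1,1)$-block $a_h(\cdot,\cdot)+(\rho\,\cdot\,,\,\cdot\,)$ is still coercive in the $\vertiii{\cdot}$-norm with the \emph{same} constant $\beta$ as in \eqref{acoercive}, and still bounded (constant enlarged by $M$), while the $b_h$- and $s_h$-blocks are untouched; this is exactly the situation in which the proof of \eqref{dic1} goes through. Note that one cannot simply take the maximiser of \eqref{dic1} and add the $\rho$-term to it, since there is no sign control on the velocity component of that maximiser --- the explicit construction of the test function has to be revisited.

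Concretely, I would recall the ingredient underlying \eqref{dic1}: for each $q_h\in P_h$ there is $\bm{w_h}\in U_h$ with $\vertiii{\bm{w_h}}\leq C_w\|q_h\|_{0,\mathcal{T}_h^x}$ and $b_h(\bm{w_h},q_h)\geq c_b\|q_h\|_{0,\mathcal{T}_h^x}^2-C_s\,s_h(q_h,q_h)^{1/2}\|q_h\|_{0,\mathcal{T}_h^x}$, with $C_w,c_b,C_s$ independent of $h$. Given $(\bm{\widetilde{u}_h},\widetilde{p}_h)\in U_h\times P_h$, choose the associated $\bm{w_h}$ for $q_h=\widetilde{p}_h$ and test the left-hand side of \eqref{ads} against $(\bm{\phi_h},q_h)=(\bm{\widetilde{u}_h}+\delta\bm{w_h},\widetilde{p}_h)$ with $\delta>0$ to be fixed. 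Using $\mathcal{A}_h((\bm{\widetilde{u}_h},\widetilde{p}_h),(\bm{\widetilde{u}_h},\widetilde{p}_h))=a_h(\bm{\widetilde{u}_h},\bm{\widetilde{u}_h})+s_h(\widetilde{p}_h,\widetilde{p}_h)$ (the $b_h$-terms cancel), the coercivity \eqref{acoercive}, the boundedness of $a_h$, the property of $\bm{w_h}$, and $(\rho\bm{\widetilde{u}_h},\bm{\widetilde{u}_h})\geq0$, one arrives at
\[
\mathcal{A}_h((\bm{\widetilde{u}_h},\widetilde{p}_h),(\bm{\phi_h},\widetilde{p}_h))+(\rho\bm{\widetilde{u}_h},\bm{\phi_h})\;\geq\;\beta\vertiii{\bm{\widetilde{u}_h}}^2+\delta c_b\|\widetilde{p}_h\|_{0,\mathcal{T}_h^x}^2+s_h(\widetilde{p}_h,\widetilde{p}_h)-\delta\mathcal{R},
\]
where the only contribution to the remainder $\mathcal{R}$ beyond what already appears in the proof of \eqref{dic1} is $(\rho\bm{\widetilde{u}_h},\bm{w_h})$, which by Cauchy--Schwarz and Poincar\'e--Friedrichs satisfies $|(\rho\bm{\widetilde{u}_h},\bm{w_h})|\leq M\|\bm{\widetilde{u}_h}\|_{\bm{L^2}}\|\bm{w_h}\|_{\bm{L^2}}\lesssim M\,\vertiii{\bm{\widetilde{u}_h}}\vertiii{\bm{w_h}}\lesssim M C_w\,\vertiii{\bm{\widetilde{u}_h}}\|\widetilde{p}_h\|_{0,\mathcal{T}_h^x}$ --- i.e. it is of the same type as the $a_h(\bm{\widetilde{u}_h},\bm{w_h})$ cross term, only with the constant enlarged by a factor proportional to $M$.

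Applying Young's inequality to each term in $\delta\mathcal{R}$ and then choosing $\delta$ small enough --- depending on $\beta$, on the constants in \eqref{acoercive}, \eqref{ahbd}, in the $\bm{w_h}$-estimate and in Poincar\'e--Friedrichs, and on $M$, but \emph{not} on $h$ --- absorbs $\delta\mathcal{R}$ into the first three terms and yields
\[
\mathcal{A}_h((\bm{\widetilde{u}_h},\widetilde{p}_h),(\bm{\phi_h},\widetilde{p}_h))+(\rho\bm{\widetilde{u}_h},\bm{\phi_h})\;\gtrsim\;\vertiii{\bm{\widetilde{u}_h}}^2+\|\widetilde{p}_h\|_{0,\mathcal{T}_h^x}^2+s_h(\widetilde{p}_h,\widetilde{p}_h)\;\gtrsim\;\vertiii{(\bm{\widetilde{u}_h},\widetilde{p}_h)}^2.
\]
Since moreover $\vertiii{(\bm{\phi_h},\widetilde{p}_h)}\leq\vertiii{(\bm{\widetilde{u}_h},\widetilde{p}_h)}+\delta\vertiii{\bm{w_h}}\leq(1+\delta C_w)\vertiii{(\bm{\widetilde{u}_h},\widetilde{p}_h)}$, dividing the previous inequality by $\vertiii{(\bm{\phi_h},\widetilde{p}_h)}$ gives \eqref{dic} with an $h$-independent constant $\alpha$ (which, unavoidably, depends on $M$). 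The boundedness \eqref{ahbd1} is immediate from \eqref{ahbd} together with $|(\rho\bm{\widetilde{u}_h},\bm{\phi_h})|\leq M\|\bm{\widetilde{u}_h}\|_{\bm{L^2}}\|\bm{\phi_h}\|_{\bm{L^2}}\lesssim M\vertiii{(\bm{\widetilde{u}_h},\widetilde{p}_h)}\vertiii{(\bm{\phi_h},q_h)}$. The step I expect to require the most care is the book-keeping in the choice of $\delta$: one must check that none of the constants that dictate how small $\delta$ must be depends on $h$, so that the term $(\rho\,\cdot\,,\,\cdot\,)$ --- which is \emph{not} itself small --- is controlled purely through its positivity and its uniform $L^\infty$-bound, and not by any perturbative smallness assumption.
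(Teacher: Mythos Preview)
Your proof is correct and uses the same two structural facts as the paper --- nonnegativity of $\rho$ and the uniform $L^\infty$-bound from Lemma~\ref{lem:density} --- but the packaging differs. You construct an explicit test pair $(\bm{\widetilde{u}_h}+\delta\bm{w_h},\widetilde{p}_h)$ via a Fortin-type velocity $\bm{w_h}$ associated to $\widetilde{p}_h$, then choose $\delta$ small (depending on $M=\|\rho\|_{L^\infty}$ but not on $h$) to absorb the cross terms, including the new one $(\rho\bm{\widetilde{u}_h},\bm{w_h})$. The paper instead works abstractly with the supremum $\mathcal{L}(\bm{\widetilde{u}_h},\widetilde{p}_h)$: it first tests on the diagonal to obtain $\vertiii{\bm{\widetilde{u}_h}}^2+s_h(\widetilde{p}_h,\widetilde{p}_h)\lesssim \mathcal{L}\cdot\vertiii{(\bm{\widetilde{u}_h},\widetilde{p}_h)}$ (here $\rho\geq0$ enters to drop $(\rho\bm{\widetilde{u}_h},\bm{\widetilde{u}_h})$), and then recovers the pressure via the inf-sup of $b_h$ alone, writing $b_h(\bm{\phi_h},\widetilde{p}_h)=\mathcal{A}_h((\bm{\widetilde{u}_h},\widetilde{p}_h),(\bm{\phi_h},0))-a_h(\bm{\widetilde{u}_h},\bm{\phi_h})$ and bounding the extra term $(\rho\bm{\widetilde{u}_h},\bm{\phi_h})$ by $\vertiii{\bm{\widetilde{u}_h}}$ using $\rho\in L^\infty$. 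Your approach avoids the two-stage inequality $\vertiii{\cdot}^2\lesssim\mathcal{L}^2+\mathcal{L}\vertiii{\cdot}$ and is arguably more transparent about where each constant comes from; the paper's approach avoids tracking $\delta$ and is closer in spirit to how one usually \emph{uses} an inf-sup condition rather than proves it. The boundedness argument \eqref{ahbd1} is identical in both.
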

	
	\begin{proof}
		Let right hand side of \eqref{dic} be denoted by $\mathcal{L}(\bm{\widetilde{u}_h},\widetilde{p}_h)$. Note that
		\begin{equation}\label{s}
			C\vertiii{\bm{\widetilde{u}_h}}^2 + \sum_{F \in \Gamma_x}h_x\|[\![\widetilde{p}_h]\!]\|^2_{\bm{L^2}} \leq \mathcal{L}(\bm{\widetilde{u}_h},\widetilde{p}_h)\vertiii{(\bm{\widetilde{u}}_h,\widetilde{p}_h)}.
		\end{equation}
		Note that $b_h(\bm{\phi_h},p_h)$ satisfies the discrete inf-sup condition (for detailed proof, see \cite[Lemma 6.10, p. 251]{2}), i.e. 
		\begin{equation*}
			\|\widetilde{p}_h\|_{0,\mathcal{T}_h^x} \lesssim \sup_{\bm{\phi_h} \in U_h \setminus \{0\}}\frac{b_h(\bm{\phi_h},\widetilde{p}_h)}{\vertiii{\bm{\phi_h}}} + \left(\sum_{F \in \Gamma_x}h_x\|[\![\widetilde{p}_h]\!]\|^2_{\bm{L^2}}\right)^\frac{1}{2}.
		\end{equation*}
		Taking $q_h = 0$ in \eqref{A_h}, it follows that
		\begin{equation*}
			b_h(\bm{\phi_h},\widetilde{p}_h) = \mathcal{A}_{h}((\bm{\widetilde{u}_h},\widetilde{p}_h),(\bm{\phi_h},0)) - a_h(\bm{\widetilde{u}_h}, \bm{\phi_h}), 
		\end{equation*}
		and hence, there holds
		\begin{equation*}
			\begin{aligned}
				\|\widetilde{p}_h\|_{0,\mathcal{T}_h^x} \lesssim&\,\, \sup_{\bm{\phi_h} \in U_h\times P_h \setminus \{(0,0)\}}\left(\frac{\mathcal{A}_h((\bm{\widetilde{u}_h},\widetilde{p}_h),(\bm{\phi_h},0))}{\vertiii{(\bm{\phi_h},0)}} - \frac{a_h(\bm{\widetilde{u}_h,\phi_h})}{\vertiii{\bm{\phi_h}}}\right) 
				\\
				&+ \left(\sum_{F \in \Gamma_x}h_x\|[\![\widetilde{p}_h]\!]\|^2_{\bm{L^2}}\right)^\frac{1}{2}
				\\
				\lesssim&\,\, \sup_{\bm{\phi_h}\in U_h \setminus \{0\}}\frac{\vert a_h(\bm{\widetilde{u}_h,\phi_h})\vert}{\vertiii{\bm{\phi_h}}} + \sup_{\bm{\phi_h}\in U_h \setminus \{0\}}\frac{\vert(\rho\bm{\widetilde{u}_h,\phi_h})\vert}{\vertiii{\bm{\phi_h}}}
				\\
				&+ \mathcal{L}(\bm{\widetilde{u}_h},\widetilde{p}_h) + \left(\sum_{F \in \Gamma_x}h_x\|[\![\widetilde{p}_h]\!]\|^2_{\bm{L^2}}\right)^\frac{1}{2}
				\\
				\lesssim&\,\, \vertiii{\bm{\widetilde{u}_h}} + \mathcal{L}(\bm{\widetilde{u}_h},\widetilde{p}_h) + \left(\sum_{F \in \Gamma_x}h_x\|[\![\widetilde{p}_h]\!]\|^2_{\bm{L^2}}\right)^\frac{1}{2}.		
			\end{aligned}
		\end{equation*}
	Squaring on both sides yields
		\begin{equation*}
			\begin{aligned}
				\|\widetilde{p}_h\|^2_{0,\mathcal{T}_h^x} &\lesssim \mathcal{L}^2(\bm{\widetilde{u}_h},\widetilde{p}_h) + \vertiii{\bm{\widetilde{u}_h}}^2 + \sum_{F \in \Gamma_x}h_x\|[\![\widetilde{p}_h]\!]\|^2_{\bm{L^2}}
				\\
				&\lesssim \mathcal{L}^2(\bm{\widetilde{u}_h},\widetilde{p}_h) + \mathcal{L}(\bm{\widetilde{u}_h},\widetilde{p}_h)\vertiii{(\bm{\widetilde{u}_h},\widetilde{p}_h)},
			\end{aligned}
		\end{equation*}
	where we have use \eqref{s}.\\
		This along with \eqref{s} yields
		\begin{equation*}
			\vertiii{(\bm{\widetilde{u}_h},\widetilde{p}_h)}^2 \lesssim \mathcal{L}^2(\bm{\widetilde{u}_h},\widetilde{p}_h) + \mathcal{L}(\bm{\widetilde{u}_h},\widetilde{p}_h)\vertiii{(\bm{\widetilde{u}_h},\widetilde{p}_h)},
		\end{equation*}
	from which \eqref{dic} follows.	A use of \eqref{ahbd} with the fact that $\rho \in L^\infty(\Omega_x)$ (Lemma \ref{lem:density}) shows the boundedness result.
		This completes the proof.
	\end{proof}
	
	Note that the local density $\rho \geq 0$. Thanks to the inf-sup condition and the boundedness property proved in Lemma \ref{exs}, an application of the Lax-Milgram lemma demonstrate the existence of a unique solution $(\bm{\widetilde{u}_h},\widetilde{p}_h) \in U_h \times P_h$ for the auxiliary problem \eqref{ads}.

To estimate $\bm{u - \widetilde{u}_h}$, we use the following identity:
	\begin{equation}\label{eu}
		\mathcal{A}_h\left((\bm{u - \widetilde{u}_h},p - \widetilde{p}_h),(\bm{\phi_h},q_h)\right) + \left(\rho\left(\bm{u - \widetilde{u}_h}\right), \bm{\phi_h}\right) = 0. 
	\end{equation}	
	Using $L^2$-projections, we rewrite
	\begin{equation}\label{thetauetau}
		\begin{aligned}
			\bm{u - \widetilde{u}_h} &= \left(\bm{u - \mathcal{P}_xu}\right) + \left(\bm{\mathcal{P}_xu - \widetilde{u}_h}\right) = \bm{\theta_u - \eta_u},
			\\
			p - \widetilde{p}_h &= \left(p - \mathcal{P}_xp\right) + \left(\mathcal{P}_xp - \widetilde{p}_h\right) = \theta_p - \eta_p,
		\end{aligned}
	\end{equation}
	where,
	\begin{equation*}
		\begin{aligned}
			\bm{\theta_u} = \bm{\mathcal{P}_xu - \widetilde{u}_h}, \quad \bm{\eta_u} = \bm{\mathcal{P}_xu - u}, \quad \theta_p = \mathcal{P}_xp - \widetilde{p}_h,\quad  \eta_p = \mathcal{P}_xp - p.
		\end{aligned}
	\end{equation*}
	Using \eqref{thetauetau}, we rewrite \eqref{eu} as
	\begin{equation}\label{etheta}
		\mathcal{A}_h\left(\left(\bm{\theta_u},\theta_p\right),\left(\bm{\phi_h},q_h\right)\right) + \left(\rho\bm{\theta_u},\bm{\phi_h}\right) = \mathcal{A}_h\left(\left(\bm{\eta_u},\eta_p\right),\left(\bm{\phi_h},q_h\right)\right) + \left(\rho\bm{\eta_u},\bm{\phi_h}\right).
	\end{equation}


	\begin{lem}\label{tb1}
		Let $(\bm{u},p)$ denote the unique solution of the Stokes' equation \eqref{contstokes}. Let $(\bm{\widetilde{u}_h},\widetilde{p}_h) \in U_h \times P_h$ be the solution of the auxiliary problem \eqref{ads}. Assume $(\bm{u},p) \in L^\infty([0,T];\bm{H^{k+1}}) \times L^\infty([0,T];H^k)$. Then, there exists a positive constant $C$, independent of $h$, such that 
		\begin{equation*}
			\|\bm{u - \widetilde{u}_h}\|_{\bm{L^2}} + h_x\vertiii{\bm{u - \widetilde{u}_h}}  + h_x\|p - \widetilde{p}_h\|_{0,\mathcal{T}_{h}^x} \leq Ch^{k+1}_x.
		\end{equation*}
	\end{lem}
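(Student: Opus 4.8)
The plan is to combine the inf-sup stability of the auxiliary problem (Lemma~\ref{exs}) with the $L^2$-projection splitting \eqref{thetauetau}--\eqref{etheta} to get the energy-norm estimates at order $h_x^k$, and then to upgrade the $\bm{L^2}$ bound on $\bm{u}-\bm{\widetilde{u}_h}$ to order $h_x^{k+1}$ by an Aubin--Nitsche duality argument built around the dual generalized Stokes' problem.

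\textbf{Energy-norm estimates for $(\bm{\theta_u},\theta_p)$.} First I would apply the inf-sup inequality \eqref{dic} to the pair $(\bm{\theta_u},\theta_p)$: there is $(\bm{\phi_h},q_h)\in U_h\times P_h$ with $\vertiii{(\bm{\phi_h},q_h)}=1$ such that $\alpha\vertiii{(\bm{\theta_u},\theta_p)}\le \mathcal{A}_h((\bm{\theta_u},\theta_p),(\bm{\phi_h},q_h))+(\rho\bm{\theta_u},\bm{\phi_h})$. By the error identity \eqref{etheta} the right-hand side equals $\mathcal{A}_h((\bm{\eta_u},\eta_p),(\bm{\phi_h},q_h))+(\rho\bm{\eta_u},\bm{\phi_h})$. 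The first term I would bound by the boundedness property \eqref{ahbd}, extended to the (non-conforming) projection errors --- here one uses the trace inequality \eqref{traceeqn} to control the edge averages of $\nabla\bm{\eta_u}$, and the $L^2$-orthogonality of $\bm{\eta_u},\eta_p$ against polynomials of degree $\le k$ to discard the volume contributions in $b_h$ --- and the second term by $\|\rho\|_{\bm{L^\infty}}\|\bm{\eta_u}\|_{\bm{L^2}}\|\bm{\phi_h}\|_{\bm{L^2}}$, using Lemma~\ref{lem:density} and the Poincar\'e--Friedrichs inequality. Standard $L^2$-projection estimates for $\bm{\eta_u},\eta_p$ then give $\vertiii{(\bm{\theta_u},\theta_p)}\lesssim h_x^{k}(\|\bm{u}\|_{\bm{H^{k+1}}}+\|p\|_{H^k})$, and the triangle inequality together with the projection bounds on $\bm{\eta_u},\eta_p$ yields $\vertiii{\bm{u}-\bm{\widetilde{u}_h}}+\|p-\widetilde{p}_h\|_{0,\mathcal{T}_h^x}\lesssim h_x^{k}$, which accounts for the two terms carrying the factor $h_x$ in the claimed bound.

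\textbf{Duality argument for $\|\bm{u}-\bm{\widetilde{u}_h}\|_{\bm{L^2}}$.} Write $\bm{\psi}:=\bm{u}-\bm{\widetilde{u}_h}$ and consider the dual generalized Stokes' problem: find periodic $(\bm{z},r)$ with $-\Delta_x\bm{z}+\rho\bm{z}+\nabla_x r=\bm{\psi}$ and $\nabla_x\cdot\bm{z}=0$. Since $\rho\ge 0$ (continuum positivity) and $\rho\in\bm{L^\infty}$ (Lemma~\ref{lem:density}), this problem is well posed --- $\bm{H^1}$ well-posedness by coercivity on divergence-free fields, then a bootstrap using $\rho\bm{z}\in\bm{L^2}$ and standard periodic Stokes' regularity --- and satisfies $\|\bm{z}\|_{\bm{H^2}}+\|r\|_{H^1}\lesssim\|\bm{\psi}\|_{\bm{L^2}}$. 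Testing the dual equation against $\bm{\psi}$, integrating by parts element by element and using the dG-consistency of $\mathcal{A}_h$ (legitimate because $\bm{z}\in\bm{H^2}$, $r\in H^1$, so all jumps of $\bm{z}$ vanish), one gets $\|\bm{\psi}\|_{\bm{L^2}}^2=\mathcal{A}_h((\bm{\psi},p-\widetilde{p}_h),(\bm{z},r))+(\rho\bm{\psi},\bm{z})$; subtracting the Galerkin orthogonality \eqref{eu} tested with $(\bm{\mathcal{P}_x z},\mathcal{P}_x r)\in U_h\times P_h$ gives $\|\bm{\psi}\|_{\bm{L^2}}^2=\mathcal{A}_h((\bm{\psi},p-\widetilde{p}_h),(\bm{z}-\bm{\mathcal{P}_x z},r-\mathcal{P}_x r))+(\rho\bm{\psi},\bm{z}-\bm{\mathcal{P}_x z})$. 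Bounding the first term by the energy-norm estimate for $(\bm{\psi},p-\widetilde{p}_h)$ ($\lesssim h_x^{k}$) times the $O(h_x)$ energy-norm approximation of $(\bm{z},r)$ (whose $\bm{H^2}\times H^1$ norm is $\lesssim\|\bm{\psi}\|_{\bm{L^2}}$), and the second by $\|\rho\|_{\bm{L^\infty}}\|\bm{\psi}\|_{\bm{L^2}}\|\bm{z}-\bm{\mathcal{P}_x z}\|_{\bm{L^2}}\lesssim h_x^{2}\|\bm{\psi}\|_{\bm{L^2}}^2$, a kick-back argument (absorbing the last term, possible once $h_x$ is small enough) yields $\|\bm{\psi}\|_{\bm{L^2}}\lesssim h_x^{k+1}$, completing the proof.

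\textbf{Main obstacle.} The delicate point is the duality step: one must identify the correct adjoint of the $\rho$-perturbed dG form, secure full $\bm{H^2}\times H^1$ elliptic regularity for the dual generalized Stokes' problem although $\rho$ is merely $\bm{L^\infty}$ and only nonnegative, and carefully track the interelement consistency terms and the mesh-dependent norms when the non-conforming projection errors are inserted into $\mathcal{A}_h$. Once the energy estimate and this duality identity are in place, the remaining manipulations are routine.
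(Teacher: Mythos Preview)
Your proof is correct and follows essentially the same route as the paper: inf-sup stability from Lemma~\ref{exs} applied to \eqref{etheta} for the $O(h_x^k)$ energy-norm estimate, then an Aubin--Nitsche duality against the $\rho$-perturbed Stokes' problem (with full $\bm{H^2}\times H^1$ regularity) for the $O(h_x^{k+1})$ $\bm{L^2}$ bound. The only cosmetic difference is that the paper runs the duality with the discrete part $\bm{\theta_u}=\bm{\mathcal{P}_xu}-\bm{\widetilde{u}_h}$ as the dual forcing rather than the full error $\bm{u}-\bm{\widetilde{u}_h}$, and bounds the resulting expression directly via the combined boundedness \eqref{ahbd1} (which already absorbs the $\rho$-term), thereby sidestepping the small kick-back you perform at the end.
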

	
	\begin{proof}
		Note that
		\[
		\vertiii{\bm{u - \widetilde{u}_h}} \leq \vertiii{\bm{\theta_u}} + \vertiii{\bm{\eta_u}}.
		\]
		Projection estimate yields $\vertiii{\bm{\eta_u}} \leq Ch^k$. Hence, it suffices to estimate $\vertiii{\bm{\theta_u}}$.\\
		Employing the discrete inf-sup condition and the boundedness property(proved in Lemma \ref{exs}) in the equation \eqref{etheta}, we obtain
		\begin{equation*}
			\alpha\vertiii{\left(\bm{\theta_u},\theta_p\right)} \leq \vertiii{\left(\bm{\eta_u},\eta_p\right)} . 
		\end{equation*}
		A use of the projection estimate yields
		\begin{equation}\label{tup}
			\vertiii{\left(\bm{\theta_u},\theta_p\right)} \leq Ch^k_x.
		\end{equation}
		Similarly, as
		\begin{equation}\label{ull2}
			\|\bm{u - \widetilde{u}_h}\|_{\bm{L^2}} \leq \|\bm{\eta_u}\|_{\bm{L^2}} + \|\bm{\theta_u}\|_{\bm{L^2}},
		\end{equation}
		and as we have $\|\bm{\eta_u}\|_{\bm{L^2}} \leq Ch^{k+1}_x$ from the projection estimate, it is enough to estimate $\|\bm{\theta_u}\|_{\bm{L^2}}$.\\
		Let $(\bm{\zeta},\xi) \in [\bm{H^1 \cap L_0^2}] \times L^2 $ denote the solution of the Stokes' problem \eqref{contstokes} with forcing term $\bm{\theta_u} \in \bm{L^2}$.
		Then, owing to Stokes' regularity \cite{giga1991abstract,amrouche1991existence,ladyzhenskaya1969mathematical}, we have
		\begin{equation}\label{reg}
			\|\bm{\zeta}\|_{\bm{H^2}} + \|\xi\|_{H^1} \leq C\|\bm{\theta_u}\|_{\bm{L^2}}.
		\end{equation} 
	Moreover, since $\bm{\zeta} \in \bm{H^2 \cap L^2_0}, \nabla\cdot\bm{\zeta} = 0, [\![\nabla\zeta_i]\!] = 0, i = 1,2$ across all $F \in \Gamma_x^0$ and $[\![\bm{\zeta}]\!] = 0$ across all $F \in \Gamma_x$, we obtain
		\begin{equation*}
			a_h(\bm{\theta_u,\zeta}) = a_h(\bm{\zeta,\theta_u}) = -\left(\Delta_x\bm{\zeta}, \bm{\theta_u}\right)_{\Omega_x},
		\end{equation*}
		\begin{equation*}
			\begin{aligned}
				b_h(\bm{\zeta},\theta_p) = 0.
			\end{aligned}
		\end{equation*}
		Since $\xi \in H^1(\Omega_x), [\![\xi]\!]\cdot\bm{n} = 0$ across all $F \in \Gamma_x^0$, we obtain
		\begin{equation*}
			\begin{aligned}		
				b_h(\bm{\theta_u},\xi) &= \left(\bm{\theta_u},\nabla_x\xi\right)_{\Omega_x},
				\\
				s_h(\theta_p,\xi) &= 0.
			\end{aligned}
		\end{equation*}
		Hence,
		\begin{equation*}
			\begin{aligned}
				\|\bm{\theta_u}\|^2_{\bm{L^2}} &= \left(\left(-\Delta_x\bm{\zeta} + \nabla_x\xi + \rho\bm{\zeta}\right),\bm{\theta_u}\right)_{\Omega_x}
				\\
				&= \mathcal{A}_h((\bm{\theta_u},\theta_p),(\bm{\zeta},\xi)) + \left(\rho\bm{\theta_u, \zeta}\right).
			\end{aligned}
		\end{equation*}
		Note that $\mathcal{A}_h((\bm{\theta_u}, \theta_p),(\bm{\mathcal{P}_x\zeta},\mathcal{P}_xp)) + \left(\rho\bm{\theta_u, \mathcal{P}_x\zeta}\right) = 0$.\\
		Using the boundedness property and the projection estimates, we obtain
		\begin{equation*}
			\begin{aligned}
				\|\bm{\theta_u}\|^2_{\bm{L^2}} &= \mathcal{A}_h((\bm{\theta_u},\theta_p),(\bm{\zeta - \mathcal{P}_x\zeta}, \xi - \mathcal{P}_x\xi)) + \left(\rho\bm{\theta_u, \zeta - \mathcal{P}_x\zeta}\right)
				\\
				&\lesssim \vertiii{(\bm{\theta_u},\theta_p)}\vertiii{(\bm{\zeta - \mathcal{P}_x\zeta}, \xi - \mathcal{P}_x\xi)} 
				\\
				&\lesssim h_x\vertiii{(\bm{\theta_u},\theta_p)}\left(\|\bm{\zeta}\|_{\bm{H^2}} + \|\xi\|_{H^1}\right)
				\\
				&\lesssim h_x\vertiii{(\bm{\theta_u},\theta_p)}\|\bm{\theta_u}\|_{\bm{L^2}},
			\end{aligned}
		\end{equation*}
	where we have used \eqref{reg} in the final step.
	Hence we get
		\begin{equation*}
			\|\bm{u - \widetilde{u}_h}\|_{\bm{L^2}} \leq Ch^{k+1}_x.
		\end{equation*}   	
		Observe that 
		\[
		\|p - \widetilde{p}_h\|_{0,\mathcal{T}_{h}^x} \leq \|\theta_p\|_{0,\mathcal{T}_{h}^x} + \|\eta_p\|_{0,\mathcal{T}_{h}^x}.
		\]
		From the projection estimate, we have $\|\eta_p\|_{0,\mathcal{T}_{h}^x} \leq Ch^k_x$ and from \eqref{tup} $\|\theta_p\|_{0,\mathcal{T}_{h}^x} \leq Ch^k_x$.
		This completes the proof.
	\end{proof}


	Now to find the estimate on the term $\bm{\widetilde{u}_h - u_h}$, we use	
	\begin{equation}\label{ahti}
		\mathcal{A}_{h}\left((\bm{\widetilde{u}_h - u_h}, \widetilde{p}_h - p_h),(\bm{\phi_h},q_h)\right) + \left(\rho\bm{\widetilde{u}_h} - \rho_h\bm{u_h}, \bm{\phi_h}\right)  = \left(\rho V - \rho_h V_h, \bm{\phi_h}\right).
	\end{equation}
	
	For the rest of this paper, we assume that there are positive constants $K, h_0$ and $K^*$, with $K^* \geq 2K$ such that
	\begin{equation*}
		\|\bm{u}\|_{L^\infty([0,T];\bm{L^\infty})} \leq K,
	\end{equation*}
	and
	\begin{equation}\label{uh}
		\|\bm{u_h}\|_{L^\infty([0,T];\bm{L^\infty})} \leq K^*, \quad \mbox{for}\,\,\,0 < h \leq h_0.
	\end{equation}

	\begin{lem}\label{tb}
		Let $(\bm{\widetilde{u}_h},\widetilde{p}_h) \in U_h \times P_h$ be solution of the auxiliary problem \eqref{ads} and let $(\bm{u_h},p_h) \in U_h \times P_h$ be the solution to the discrete generalized Stokes' problem \eqref{ch}. Then,
		\begin{equation}\label{eq:ul222}
			\vertiii{\bm{\widetilde{u}_h - u_h}} + \|\widetilde{p}_h - p_h\|_{0,\mathcal{T}_h^x} \leq C(K^*)\|f - f_h\|_{0,\mathcal{T}_h}.
		\end{equation}		
	\end{lem}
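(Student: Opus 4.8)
The plan is to use the error equation \eqref{ahti} for the pair $(\bm{\widetilde{u}_h - u_h}, \widetilde{p}_h - p_h)$, then to split the awkward term $\left(\rho\bm{\widetilde{u}_h} - \rho_h\bm{u_h}, \bm{\phi_h}\right)$ so as to isolate the ``coercive'' piece $\left(\rho_h(\bm{\widetilde{u}_h - u_h}),\bm{\phi_h}\right)$, moving everything else to the right-hand side. Writing $\bm{e} := \bm{\widetilde{u}_h - u_h}$ and $\varepsilon := \widetilde{p}_h - p_h$, I would rewrite $\rho\bm{\widetilde{u}_h}-\rho_h\bm{u_h} = \rho_h\bm{e} + (\rho-\rho_h)\bm{\widetilde{u}_h}$, so that \eqref{ahti} becomes
\begin{equation*}
\mathcal{A}_h\bigl((\bm{e},\varepsilon),(\bm{\phi_h},q_h)\bigr) + \left(\rho_h\bm{e},\bm{\phi_h}\right) = \left(\rho V - \rho_h V_h,\bm{\phi_h}\right) - \left((\rho-\rho_h)\bm{\widetilde{u}_h},\bm{\phi_h}\right).
\end{equation*}
Since $\mathcal{A}_h$ alone satisfies the discrete inf-sup condition \eqref{dic1} and is bounded \eqref{ahbd}, I would apply \eqref{dic1} to $(\bm{e},\varepsilon)$: there is $(\bm{\phi_h},q_h)$ with $\vertiii{(\bm{\phi_h},q_h)}=1$ realizing (up to a factor) the supremum, giving
\begin{equation*}
\alpha\vertiii{(\bm{e},\varepsilon)} \le \mathcal{A}_h\bigl((\bm{e},\varepsilon),(\bm{\phi_h},q_h)\bigr) = -\left(\rho_h\bm{e},\bm{\phi_h}\right) + \left(\rho V-\rho_hV_h,\bm{\phi_h}\right) - \left((\rho-\rho_h)\bm{\widetilde{u}_h},\bm{\phi_h}\right).
\end{equation*}

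The three terms on the right are then estimated one at a time. For $\left(\rho V-\rho_hV_h,\bm{\phi_h}\right)$, I would use Cauchy--Schwarz together with the third estimate of Lemma \ref{rhoh2}, $\|\rho V-\rho_hV_h\|_{0,\mathcal{T}_h^x}\le 4L^2\|f-f_h\|_{0,\mathcal{T}_h}$, and the Poincar\'e--Friedrichs inequality $\|\bm{\phi_h}\|_{0,\Omega_x}\lesssim\vertiii{\bm{\phi_h}}\le\vertiii{(\bm{\phi_h},q_h)}=1$. For $\left((\rho-\rho_h)\bm{\widetilde{u}_h},\bm{\phi_h}\right)$, I would bound $|(\rho-\rho_h)\bm{\widetilde{u}_h}|$ using H\"older, putting $\bm{\widetilde{u}_h}$ in $\bm{L^\infty}$: from Lemma \ref{tb1}, $\bm{\widetilde{u}_h}$ is close to $\bm{u}\in\bm{L^\infty}$, and an inverse estimate on $\bm{u-\widetilde{u}_h}$ (via Lemma \ref{L:uinf} / the norm comparison \eqref{normcompeqn}) shows $\|\bm{\widetilde{u}_h}\|_{\bm{L^\infty}}\le C(K)$ for $h$ small; then Cauchy--Schwarz and the first estimate of Lemma \ref{rhoh2}, $\|\rho-\rho_h\|_{0,\mathcal{T}_h^x}\le 2L\|f-f_h\|_{0,\mathcal{T}_h}$, finish it. Finally, for the term $\left(\rho_h\bm{e},\bm{\phi_h}\right)$, which is genuinely problematic because $\rho_h$ need not be non-negative, I would write $\rho_h = \rho + (\rho_h-\rho)$, bound $(\rho\bm{e},\bm{\phi_h})$ by $\|\rho\|_{L^\infty}\|\bm{e}\|_{0}\vertiii{\bm{\phi_h}}$ using Lemma \ref{lem:density}, and $((\rho_h-\rho)\bm{e},\bm{\phi_h})$ using H\"older with $\bm{e}\in\bm{L^\infty}$ (again an inverse inequality: $\|\bm{e}\|_{\bm{L^\infty}}\lesssim h_x^{-1}\|\bm{e}\|_0$ on each cell) and $\|\rho-\rho_h\|_0\le 2L\|f-f_h\|_0$. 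Collecting terms, one arrives at
\begin{equation*}
\alpha\vertiii{(\bm{e},\varepsilon)} \le C(K^*)\|f-f_h\|_{0,\mathcal{T}_h} + C\|\rho\|_{L^\infty}\,\|\bm{e}\|_{0,\Omega_x} + C\,h_x^{-1}\|f-f_h\|_{0,\mathcal{T}_h}\,\|\bm{e}\|_{0,\Omega_x}.
\end{equation*}

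The main obstacle is absorbing the term $C\|\rho\|_{L^\infty}\|\bm{e}\|_0$ (and the $h_x^{-1}$-weighted term) back into the left side. Here I would invoke the Poincar\'e--Friedrichs inequality $\|\bm{e}\|_{0,\Omega_x}\lesssim\vertiii{\bm{e}}\le\vertiii{(\bm{e},\varepsilon)}$ and then note this cannot work unconditionally — it is exactly the point where a smallness hypothesis is needed, but in fact a cleaner route avoids the $\|\rho\|_{L^\infty}$ term altogether: rather than splitting $\rho_h\bm{e}=\rho\bm{e}+\dots$, I would test \emph{directly} in a way that keeps $\left(\rho_h\bm{e},\bm{e}\right)$ and observe this term, though not sign-definite, satisfies $|(\rho_h\bm{e},\bm{e})|\le\|\rho_h\|_{L^\infty(\mathcal{T}_h^x)}\|\bm{e}\|_0^2\le \|\rho_h\|_{L^\infty}\,C\,h_x^{-2}\|\bm{e}\|_0\cdot\|\bm{e}\|_0$… which still needs control. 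The honest resolution, and the one I expect the authors use, is: apply the inf-sup bound to get $\vertiii{(\bm{e},\varepsilon)}$ controlled by $\|f-f_h\|_0$ plus $C(1+h_x^{-1}\|f-f_h\|_0)\|\bm{e}\|_0$, then use $\|\bm{e}\|_0\lesssim\vertiii{\bm{e}}$ and \emph{assume} (consistent with the paper's standing smallness convention on $h$, and with a bootstrap on $\|f-f_h\|_0$ which will itself be shown small) that the coefficient $C h_x^{-1}\|f-f_h\|_{0,\mathcal{T}_h}$ is $\le \alpha/2$; the remaining non-small constant multiplying $\|\bm{e}\|_0$ must be handled by keeping the $\left(\rho\bm{e},\bm{e}\right)$-type contribution on the left using that $\rho\ge0$ — i.e. one should test the \emph{auxiliary-minus-discrete} equation cleverly so that the good sign of $\rho$ (not $\rho_h$) is exploited, exactly as \eqref{ads} was engineered. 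Modulo that bookkeeping, \eqref{eq:ul222} follows, with $C(K^*)$ absorbing $L$, $\|\rho\|_{L^\infty}$, $\|\bm{u}\|_{\bm{L^\infty}}$ and the inf-sup constant.
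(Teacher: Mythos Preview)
Your decomposition is the wrong one, and that is why you end up chasing your tail. You write
\[
\rho\bm{\widetilde{u}_h}-\rho_h\bm{u_h}=\rho_h(\bm{\widetilde{u}_h-u_h})+(\rho-\rho_h)\bm{\widetilde{u}_h},
\]
which leaves $\rho_h$ (possibly sign-changing) attached to the error on the left, forcing you to use only the bare inf-sup \eqref{dic1} and then fight to absorb $(\rho_h\bm{e},\bm{\phi_h})$. The paper instead splits the \emph{other} way,
\[
\rho\bm{\widetilde{u}_h}-\rho_h\bm{u_h}=\rho(\bm{\widetilde{u}_h-u_h})+(\rho-\rho_h)\bm{u_h},
\]
so that \eqref{ahti} becomes
\[
\mathcal{A}_h\bigl((\bm{e},\varepsilon),(\bm{\phi_h},q_h)\bigr)+\bigl(\rho\,\bm{e},\bm{\phi_h}\bigr)=\bigl((\rho_h-\rho)\bm{u_h},\bm{\phi_h}\bigr)+\bigl(\rho V-\rho_hV_h,\bm{\phi_h}\bigr).
\]
Now the left-hand side is exactly the bilinear form of the auxiliary problem \eqref{ads}, with the \emph{continuum} density $\rho\ge 0$, so the strengthened inf-sup condition of Lemma~\ref{exs} applies directly and gives $\alpha\vertiii{(\bm{e},\varepsilon)}$ controlled by the right-hand side --- no absorption, no bootstrap, no smallness of $h$ needed at this step.

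The two right-hand terms are then immediate: $\|\rho V-\rho_hV_h\|_{0,\mathcal{T}_h^x}\le 4L^2\|f-f_h\|_{0,\mathcal{T}_h}$ by Lemma~\ref{rhoh2}, and $\bigl|((\rho_h-\rho)\bm{u_h},\bm{\phi_h})\bigr|\le \|\bm{u_h}\|_{\bm{L^\infty}}\|\rho-\rho_h\|_{0,\mathcal{T}_h^x}\|\bm{\phi_h}\|_{\bm{L^2}}\le 2LK^*\|f-f_h\|_{0,\mathcal{T}_h}\vertiii{\bm{\phi_h}}$, using the standing assumption \eqref{uh} on $\|\bm{u_h}\|_{\bm{L^\infty}}$ rather than an $L^\infty$ bound on $\bm{\widetilde{u}_h}$. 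This yields \eqref{eq:ul222} in two lines. You do hint at this at the very end (``good sign of $\rho$ (not $\rho_h$)''), but never actually implement it; the argument you wrote out does not close.
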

	
	\begin{proof}
		The equation \eqref{ahti} can be rewritten as	
		\begin{equation*}
			\begin{aligned}
				\mathcal{A}_h&\left((\bm{\widetilde{u}_h - u_h}, \widetilde{p}_h - p_h),(\bm{\phi_h},q_h)\right) + \left(\rho\left(\bm{\widetilde{u}_h} - \bm{u_h}\right), \bm{\phi_h}\right) 
				\\
				&= \left(\left(\rho_h - \rho\right)\bm{u_h}, \bm{\phi_h}\right) + \left(\rho V - \rho_h V_h, \bm{\phi_h}\right).
			\end{aligned}
		\end{equation*}
	Then, \eqref{eq:ul222} is a consequence of the discrete inf-sup condition and the boundedness property proved in Lemma \ref{exs} and the $\|\cdot\|_{0,\mathcal{T}_h}$ bounds of $\left(\rho - \rho_h\right)$ and $\left(\rho V - \rho_hV_h\right)$ from Lemma \ref{rhoh2}. The dependence of the constant in \eqref{eq:ul222} on $K^*$ is due to our assumption \eqref{uh}.
	\end{proof}

	Putting together the estimate of Lemma \ref{tb1} and Lemma \ref{tb} gives the following estimate. 
	
	\begin{thm}\label{ul2estimate}
		Let $(\bm{u},p)$ denote the unique solution of the Stokes' equation \eqref{contstokes} and let $(\bm{u_h},p_h) \in U_h\times P_h$ solve the discrete generalized Stokes' problem \eqref{ch}. Assume that $(\bm{u},p) \in L^\infty([0,T];\bm{H^{k+1}}) \times L^\infty([0,T];H^k(\Omega_x))$. Then, there exists a positive constant $C$ independent of $h$, such that
		\begin{equation*}
			\begin{aligned}
				\|\bm{u-u_h}\|_{\bm{L^2}} + h_x\vertiii{\bm{u - u_h}} + h_x\|p - p_h\|_{0,\mathcal{T}_h^x} \leq Ch^{k+1}_x + C(K^*_0)\|f - f_h\|_{0,\mathcal{T}_h}.
			\end{aligned}
		\end{equation*} 
	\end{thm}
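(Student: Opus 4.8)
The plan is to combine the two intermediate estimates from Lemmas~\ref{tb1} and \ref{tb} via the triangle inequality, using the decomposition
\begin{equation*}
	\bm{u} - \bm{u_h} = \left(\bm{u} - \bm{\widetilde{u}_h}\right) + \left(\bm{\widetilde{u}_h} - \bm{u_h}\right),
\end{equation*}
and likewise for the pressure $p - p_h = (p - \widetilde{p}_h) + (\widetilde{p}_h - p_h)$. First I would write
\begin{equation*}
	\|\bm{u-u_h}\|_{\bm{L^2}} \leq \|\bm{u - \widetilde{u}_h}\|_{\bm{L^2}} + \|\bm{\widetilde{u}_h - u_h}\|_{\bm{L^2}},
\end{equation*}
bound the first term by $Ch_x^{k+1}$ using Lemma~\ref{tb1}, and bound the second term by first passing from the $\bm{L^2}$-norm to the $\vertiii{\cdot}$-norm via the Poincar\'e--Friedrichs type inequality (so $\|\bm{\widetilde{u}_h - u_h}\|_{\bm{L^2}} \lesssim \vertiii{\bm{\widetilde{u}_h - u_h}}$) and then invoking Lemma~\ref{tb}, which gives $\vertiii{\bm{\widetilde{u}_h - u_h}} \leq C(K^*)\|f - f_h\|_{0,\mathcal{T}_h}$.

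Next I would treat the energy-type seminorm term $h_x\vertiii{\bm{u - u_h}}$: again split as $\vertiii{\bm{u-u_h}} \leq \vertiii{\bm{u - \widetilde{u}_h}} + \vertiii{\bm{\widetilde{u}_h - u_h}}$, use the bound $\vertiii{\bm{u - \widetilde{u}_h}} \leq Ch_x^k$ from Lemma~\ref{tb1} (so that multiplying by $h_x$ yields $Ch_x^{k+1}$), and use Lemma~\ref{tb} for the remaining piece. The pressure term $h_x\|p - p_h\|_{0,\mathcal{T}_h^x}$ is handled identically: Lemma~\ref{tb1} gives $\|p - \widetilde{p}_h\|_{0,\mathcal{T}_h^x} \leq Ch_x^k$ and Lemma~\ref{tb} gives $\|\widetilde{p}_h - p_h\|_{0,\mathcal{T}_h^x} \leq C(K^*)\|f - f_h\|_{0,\mathcal{T}_h}$. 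Collecting the three estimates and absorbing the $\|f-f_h\|_{0,\mathcal{T}_h}$-contributions into a single constant (which I would relabel $C(K^*_0)$ to match the statement) produces the claimed inequality.

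Honestly, there is no real obstacle here: the theorem is purely a bookkeeping assembly of results already established, and the only point requiring a moment's care is ensuring that each factor of $h_x$ in front of a seminorm or pressure term is matched against an $h_x^k$ bound (not an $h_x^{k+1}$ bound) from Lemma~\ref{tb1}, so that every term carrying the ``optimal'' rate ends up as $h_x^{k+1}$, while the factor of $h_x$ in front of $\vertiii{\bm{\widetilde{u}_h - u_h}}$ and $\|\widetilde{p}_h - p_h\|_{0,\mathcal{T}_h^x}$ is simply discarded (bounded by $h_0$ times the constant) since those pieces are already controlled by $\|f-f_h\|_{0,\mathcal{T}_h}$ with an $h$-independent constant. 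I would also remark that the constant $C(K^*_0)$ inherits its dependence on $K^*$ through assumption~\eqref{uh}, exactly as in Lemma~\ref{tb}.
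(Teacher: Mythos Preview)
Your proposal is correct and matches the paper's approach exactly: the paper simply states that the theorem follows by ``putting together the estimate of Lemma~\ref{tb1} and Lemma~\ref{tb},'' and you have spelled out precisely that triangle-inequality assembly, including the Poincar\'e--Friedrichs step to pass from $\|\bm{\widetilde{u}_h-u_h}\|_{\bm{L^2}}$ to $\vertiii{\bm{\widetilde{u}_h-u_h}}$ and the matching of $h_x$ factors.
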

	

	\subsection{Error estimates for Vlasov equation }

The error equation for the Vlasov equation is
\begin{equation}
	\begin{aligned}
		\left(\partial_t\left(f - f_h\right),\psi_h\right) + \sum_{R\in\mathcal{T}_h}\left(\mathcal{B}_{h,R}(\bm{u};f,\psi_h) - \mathcal{B}_{h,R}(\bm{u_h};f_h,\psi_h)\right) = 0 \quad \forall \quad \psi \in \mathcal{Z}_h
	\end{aligned}
\end{equation}
	
where $\mathcal{B}_{h,R}(\cdot;\cdot,\cdot)$ is defined in \eqref{bh}.

	Setting $e_f = f - f_h$, we rewrite the above equation as  
	\begin{equation}\label{error}
		\begin{aligned}
			\left(\partial_t e_f,\psi_h\right) + a_h^0(e_f,\psi_h) + \mathcal{N}(\bm{u};f,\psi_h) - \mathcal{N}^h(\bm{u_h};f_h,\psi_h) = 0
			\quad\forall\, \psi_h \in \mathcal{Z}_h , 
		\end{aligned}
	\end{equation}
	where
	\begin{equation}\label{ae}
		a_h^0(e_f,\psi_h) = -\sum_{R \in \mathcal{T}_{h}}\int_{R}e_f v\cdot \nabla_x\psi_h\,{\rm d}v\,{\rm d}x
		+ \sum_{T^v \in \mathcal{T}_{h}^v}\int_{T^v}\int_{\Gamma_x}\{v e_f\}_\alpha\cdot[\![\psi_h]\!]\,{\rm d}s(x)\,{\rm d}v,
	\end{equation}	
	\begin{equation}\label{N}
		\mathcal{N}(\bm{u};f,\psi_h) = -\sum_{R \in \mathcal{T}_{h}}\int_{R} f (\bm{u} - v)\cdot\nabla_v\psi_h\,{\rm d}v\,{\rm d}x \,\, + \sum_{T^x \in \mathcal{T}_{h}^x}\int_{T^x}\int_{\Gamma_v}f(\bm{u} - v)\cdot [\![\psi_h]\!]\,{\rm d}s(v)\,{\rm d}x \, ,
	\end{equation}
	and
	\begin{equation}\label{Nh}
		\begin{aligned}
			\mathcal{N}^h(\bm{u_h};f_h,\psi_h) = &-\sum_{R \in \mathcal{T}_{h}}\int_{R} f_h (\bm{u_h} - v)\cdot\nabla_v\psi_h\,{\rm d}v\,{\rm d}x 
			\\
			&+ \sum_{T^x \in \mathcal{T}_{h}^x}\int_{T^x}\int_{\Gamma_v}\{(\bm{u_h} - v) f_h\}_\beta\cdot[\![\psi_h]\!]\,{\rm d}s(v)\,{\rm d}x \, .
		\end{aligned}
	\end{equation}

	\subsection{Special Projection:}
	
	To define the projection operator, first we define one dimensional projection operator $\pi^{\pm}$ whose definition is based on the projection defined in \cite[Section 3.1]{schotzau2000time}. Let $\pi^+ : H^{\frac{1}{2}+\epsilon} \rightarrow X_h$ be the projection operator defined by
	\begin{equation}\label{pi+}
		\int_{T^x}(\pi^+(w) - w)q_h\,{\rm d}x = 0 \quad \forall\,\, q_h \in \mathbb{P}^{k-1}(T^x),
	\end{equation}
	together with the matching condition:
	\begin{equation*}\label{pii}
		\pi^+(w(x^+)) = w(x^+),
	\end{equation*}
and let $\pi^- : H^{\frac{1}{2}+\epsilon} \rightarrow X_h$ be the projection operator defined as
\begin{equation}\label{pi-}
	\int_{T^x}(\pi^+(w) - w)q_h\,{\rm d}x = 0 \quad \forall\,\, q_h \in \mathbb{P}^{k-1}(T^x),
\end{equation}
together with the matching condition:
\begin{equation*}\label{pii}
	 \pi^-(w(x^-)) = w(x^-).
\end{equation*}
The projection estimate for $\pi^\pm$ are available(refer to \cite{schotzau2000time}) 
	\begin{equation*}
		\|w - \pi^\pm(w)\|_{0,T^x} \leq Ch^{k+1}|w|_{k+1,T^x},
	\end{equation*}
	where $C$ is a constant depending only on the shape-regularity of the mesh and the polynomial degree.\\
	Inspired by the projection operator defined in \cite{ayuso2009discontinuous,de2012discontinuous}, we define projection operator $\Pi_h : \mathcal{C}^0(\Omega) \rightarrow \mathcal{Z}_h$ as follows: Let $R = T^x \times T^v$ be an arbitrary element of $\mathcal{T}_h$ and let $w \in \mathcal{C}^0({R})$. The restriction of $\Pi_h(w)$ to $R$ is defined by 
	\begin{equation}\label{pipi}
		\Pi_h(w) =
		\left\{
		\begin{aligned}
			&(\tilde{\Pi}_x\otimes\tilde{\Pi}_v)(w)\quad\mbox{if sign}\left(\left(\bm{u} - v\right)\cdot \bm{n}\right) = \mbox{constant},
			\\
			&(\tilde{\Pi}_x \otimes \tilde{\mathcal{P}}_v)(w) \quad\mbox{if sign}\left(\left(\bm{u} - v\right)\cdot \bm{n}\right) \neq \mbox{constant},
		\end{aligned}
		\right.
	\end{equation}
	where $\tilde{\Pi}_x : \mathcal{C}^0(\Omega_x) \rightarrow X_h$ and $\tilde{\Pi}_v : \mathcal{C}^0(\Omega_v) \rightarrow V_h$ are the $2$-dimensional projection operators as follows: (similar to \cite{lesaint1974finite,4})
	Taking $v = [v_1, v_2]^t$ and $u = [u_1, u_2]^t$,
	\begin{equation*}\label{pix}
		\tilde{\Pi}_x(w) = 
		\left\{
		\begin{aligned}
			&\pi_{x,1}^- \otimes \pi_{x,2}^- \quad\mbox{if}\quad v_1 > 0, v_2 > 0,
			\\
			&\pi_{x,1}^+ \otimes \pi_{x,2}^- \quad\mbox{if}\quad v_1 < 0, v_2 > 0,
			\\
			&\pi_{x,1}^+ \otimes \pi_{x,2}^+ \quad\mbox{if}\quad v_1 < 0, v_2 < 0,
			\\
			&\pi_{x,1}^- \otimes \pi_{x,2}^+ \quad\mbox{if}\quad v_1 > 0, v_2 < 0,
		\end{aligned}
		\right.
	\end{equation*}
	and
	\begin{equation*}\label{piv}
		\tilde{\Pi}_v(w) = 
		\left\{
		\begin{aligned}
			&\pi_{v,1}^- \otimes \pi_{v,2}^- \quad\mbox{if}\quad u_1 -v_1 > 0, u_2 - v_2 > 0,
			\\
			&\pi_{v,1}^+ \otimes \pi_{v,2}^- \quad\mbox{if}\quad u_1 -v_1 < 0, u_2 - v_2 > 0,
			\\
			&\pi_{v,1}^- \otimes \pi_{v,2}^+ \quad\mbox{if}\quad u_1 -v_1 > 0, u_2 - v_2 < 0,
			\\
			&\pi_{v,1}^+ \otimes \pi_{v,2}^+ \quad\mbox{if}\quad u_1 -v_1 < 0, u_2 - v_2 < 0,
		\end{aligned}
		\right.
	\end{equation*}
	where the subscript $i$ in $\pi^\pm_{r,i} (r = x \hspace{1mm}\mbox{or}\hspace{1mm} v)$ refers the fact that projection is along the $i$th component in the $r$ space.\\
	In \eqref{pipi}, $\tilde{\mathcal{P}}_v: L^2(\Omega_v) \rightarrow V_h$, which accounts for the cases where $(\bm{u} - v)\cdot \bm{n}$ changes sign across any single element $T^x \times \partial T^v$ is given by
	\begin{equation*}
		\tilde{\mathcal{P}}_v(w) = 
		\left\{
		\begin{aligned}
			&[\mathcal{P}_{v,1}\otimes\pi_{v,2}^\pm](w) \quad\mbox{if sign}(u_1 - v_1) \neq \hspace{1mm}\mbox{constant and sign}(u_2 - v_2) = \mbox{constant},
			\\
			&[\pi_{v,1}^\pm \otimes \mathcal{P}_{v,2}](w) \quad\mbox{if sign}(u_1 - v_1) = \mbox{constant and sign}(u_2 - v_2) \neq \mbox{constant},
			\\
			&[\mathcal{P}_{v,1}\otimes\mathcal{P}_{v,2}](w)\quad\mbox{if sign}(u_1 - v_1) \neq \mbox{constant and sign}(u_2 - v_2) \neq \mbox{constant},
		\end{aligned}
		\right.
	\end{equation*}
	where, $\mathcal{P}_{v,i}, i = 1,2,$ stands for the standard one-dimensional projection along the $v_i$ direction.

	The following Lemma deals with the approximation properties of projection operator $\Pi_h$, whose proof is similar to \cite[Lemma $4.1$, p. $19$]{de2012discontinuous}. 
	\begin{lem}\label{westimate}
		Let $w \in H^{k+1}(R), s \geq 0$ and let $\Pi_h$ be the projection operator defined through \eqref{pi+}-\eqref{pipi}. Then, for all $e \in (\partial T^x \times T^v) \cup (T^x \times \partial T^v)$, there holds
		\begin{equation}\label{w}
			\|w - \Pi_h(w)\|_{0,R} + h^\frac{1}{2}\|w - \Pi_h(w)\|_{0,e} \leq Ch^{k+1}\|w\|_{k+1,R}.
		\end{equation}
	\end{lem}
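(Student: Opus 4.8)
The plan is to exploit the tensor‑product structure of $\Pi_h$ and reduce \eqref{w} to one–dimensional approximation and trace estimates for its building blocks, the Radau–type projectors $\pi^{\pm}$ and the one–dimensional $L^2$–projectors $\mathcal{P}_{v,i}$. On a fixed element $R=T^x\times T^v$, in every sign configuration appearing in \eqref{pipi}, the operator $\Pi_h|_R$ is a product $\pi_1\otimes\pi_2\otimes\pi_3\otimes\pi_4$ of four one–dimensional operators — two acting in the $x$–variables, two in the $v$–variables, each being either some $\pi^{\pm}$ or some $\mathcal{P}_{v,i}$ — so it suffices to treat one generic such product, the final constant being the maximum over the finitely many configurations (including the mixed branch built on $\tilde{\mathcal P}_v$).

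First I would pass to the reference cell $\hat R=\hat T^x\times\hat T^v$ via the affine maps of $T^x$ and $T^v$, so that the scaled operator is $\hat\Pi=\hat\pi_1\otimes\cdots\otimes\hat\pi_4$. Two facts about $\hat\Pi$ are needed: (i) it reproduces the tensor–product polynomial space $\mathbb{P}^{k}(\hat T^x)\times\mathbb{P}^{k}(\hat T^v)$, which is immediate since each $\hat\pi_i$ reproduces $\mathbb{P}^k$ of the corresponding interval — for $\pi^\pm$ by \eqref{pi+}--\eqref{pi-} together with the matching condition, for $\mathcal{P}_{v,i}$ by definition — and tensorizing preserves reproduction; and (ii) $\hat\Pi$ is bounded on $H^{k+1}(\hat R)$. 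Granting (i)--(ii), the operator $I-\hat\Pi$ is a bounded linear map $H^{k+1}(\hat R)\to H^{1}(\hat R)$ that annihilates $\mathbb{P}^{k}(\hat R)$, so the Bramble--Hilbert/Deny--Lions lemma gives $\|\hat w-\hat\Pi\hat w\|_{0,\hat R}+\|\hat w-\hat\Pi\hat w\|_{0,\hat e}\lesssim |\hat w|_{k+1,\hat R}$ for every face $\hat e$ of $\hat R$, the trace on $\hat e$ being controlled by the $H^1(\hat R)$–norm. Scaling back, the volume term picks up $h^{k+1}$ and the face term $h^{k+1/2}$ (the extra half power being the Jacobian of the lower–dimensional face map, where quasi‑uniformity is used so that $h_x,h_v\simeq h$), which is precisely \eqref{w} after bounding the seminorm by the full norm.

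The crux is point (ii). For the $L^2$–projection factors this is classical (indeed they are $L^p$–stable, cf. \eqref{L2stability}). The delicate factors are the $\pi^{\pm}$: because of the pointwise matching condition they are \emph{not} $L^2$–stable, but their defining functionals — pairing against $\mathbb{P}^{k-1}(\hat I)$ and evaluation at an endpoint — are each continuous on $H^{k+1}(\hat I)\hookrightarrow C^0(\hat I)$ and unisolvent on $\mathbb{P}^{k}(\hat I)$, so $\hat\pi^{\pm}:H^{k+1}(\hat I)\to\mathbb{P}^{k}(\hat I)$ is bounded; propagating this boundedness through the tensor product one variable at a time, using that each factor maps into a finite–dimensional space on which all norms are equivalent, yields the $H^{k+1}(\hat R)$–boundedness of $\hat\Pi$. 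I expect this stability bookkeeping — together with checking that the bound is uniform across the configurations in \eqref{pipi} and in the branch involving $\tilde{\mathcal P}_v$ — to be the only genuinely delicate part; the scaling and Bramble--Hilbert steps are routine. As an alternative to the reference–cell route one may telescope $I-\Pi_h=\sum_{j}(\pi_1\cdots\pi_{j-1})(I-\pi_j)$ directly on $R$ and insert the one–dimensional estimate $\|w-\pi^{\pm}w\|_{0,T^x}\lesssim h^{k+1}|w|_{k+1,T^x}$ recalled just before the lemma, with its trace companion following from a scaled trace inequality; but this still needs the same stability of the leading factors $\pi_1\cdots\pi_{j-1}$, so it does not circumvent the main difficulty.
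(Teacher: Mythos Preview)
The paper does not prove this lemma at all; it simply states that the proof is ``similar to \cite[Lemma 4.1, p.~19]{de2012discontinuous}'' and moves on. Your outline---mapping to the reference element, exploiting that each sign configuration in \eqref{pipi} makes $\Pi_h|_R$ a tensor product of four one-dimensional projectors (each a $\pi^{\pm}$ or an $L^2$-projection), verifying polynomial reproduction and $H^{k+1}$-boundedness of $\hat\Pi$, invoking Bramble--Hilbert, and then scaling---is precisely the standard argument used in the cited reference and in the earlier Vlasov--Poisson analyses \cite{ayuso2009discontinuous,de2012discontinuous}. Your identification of the only nontrivial point (stability of the Radau-type factors $\hat\pi^{\pm}$, which fails in $L^2$ but holds on spaces embedding into $C^0$, and its propagation through the tensor product) is also accurate, and the uniformity over the finitely many sign cases, including the $\tilde{\mathcal P}_v$ branch, is indeed just bookkeeping.
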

	
	Summing estimates \eqref{w} from Lemma \ref{westimate}, over elements of the partition $\mathcal{T}_h$, we obtain
	\begin{equation}\label{wprojection}
		\begin{aligned}
			\|w - \Pi_h(w)\|_{0,\mathcal{T}_h} &+ h^{\frac{1}{2}}\|w - \Pi_h(w)\|_{0,\Gamma_x\times \mathcal{T}^v_{h}}
			\\
			& + h^{\frac{1}{2}}\|w - \Pi_h(w)\|_{0,\mathcal{T}^x_{h}\times\Gamma_v} \leq Ch^{k+1}\|w\|_{k+1,\Omega}.
		\end{aligned}
	\end{equation}
	

	Using the special projection, split $f - f_h$ as
	\begin{equation}\label{p}
		e_f := f - f_h := \left(\Pi_h(f) - f_h\right) - \left(\Pi_h(f) - f\right) := \theta_f - \eta_f,
	\end{equation}
	where
	\begin{equation*}
		\theta_f = \Pi_h(f) - f_h \quad \mbox{and} \quad \eta_f = \Pi_h(f) - f.
	\end{equation*}

	\begin{lem}\label{lemn}
		Let $\bm{u} \in C^0(\Omega_x), f \in C^0(\Omega)$ and $f_h \in \mathcal{Z}_h$ with $k \geq 0$. Then, the following identity holds  
		\begin{align*}
			\mathcal{N}(\bm{u};f,\theta_f) - \mathcal{N}^h(\bm{u_h};f_h,\theta_f) &= \sum_{T^x \in \mathcal{T}_{h}^x}\int_{T^x}\int_{\Gamma_v}\frac{|(\bm{u_h} - v)\cdot \bm{n}|}{2}\vert[\![\theta_f]\!]\vert^2\,{\rm d}s(v)\,{\rm d}x
			\\
			& + \sum_{R \in \mathcal{T}_{h}}\int_{R} \left((\bm{u-u_h})\cdot\nabla_v f\,\theta_f - \theta^2_f\right)\,{\rm d}v\,{\rm d}x + \mathcal{K}^2(\bm{u_h} - v,f,\theta_f),
		\end{align*}
		where
		\begin{equation}\label{K2}
			\mathcal{K}^2(\bm{u_h} - v,f,\theta_f) = \sum_{R \in \mathcal{T}_{h}}\int_R \eta_f (\bm{u_h} - v)\cdot \nabla_v \theta_f\,{\rm d}v\,{\rm d}x - \sum_{T^x \in \mathcal{T}_{h}^x}\int_{T^x}\int_{\Gamma_v}\{(\bm{u_h} - v)\eta_f\}_\beta\cdot[\![\theta_f]\!]\,{\rm d}s(v)\,{\rm d}x\, .
		\end{equation}
	\end{lem}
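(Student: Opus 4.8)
The plan is to establish the identity by a standard discontinuous Galerkin integration-by-parts manipulation, performed in two stages after inserting the intermediate form $\mathcal{N}(\bm{u_h};f,\theta_f)$, built from the discrete velocity but the exact (continuous) distribution $f$:
\[
\mathcal{N}(\bm{u};f,\theta_f) - \mathcal{N}^h(\bm{u_h};f_h,\theta_f) = \bigl[\mathcal{N}(\bm{u};f,\theta_f) - \mathcal{N}(\bm{u_h};f,\theta_f)\bigr] + \bigl[\mathcal{N}(\bm{u_h};f,\theta_f) - \mathcal{N}^h(\bm{u_h};f_h,\theta_f)\bigr].
\]
In the first bracket the $v$-dependence drops out of the difference, leaving
\[
\mathcal{N}(\bm{u};f,\theta_f) - \mathcal{N}(\bm{u_h};f,\theta_f) = -\sum_{R\in\mathcal{T}_h}\int_R f(\bm{u}-\bm{u_h})\cdot\nabla_v\theta_f\,{\rm d}v\,{\rm d}x + \sum_{T^x}\int_{T^x}\int_{\Gamma_v}f(\bm{u}-\bm{u_h})\cdot[\![\theta_f]\!]\,{\rm d}s(v)\,{\rm d}x .
\]
I would then integrate the volume term by parts in $v$, using that $\bm{u},\bm{u_h}$ are independent of $v$ (so $\nabla_v\cdot(f(\bm{u}-\bm{u_h})) = (\bm{u}-\bm{u_h})\cdot\nabla_v f$) and the continuity of $f$ across every interior $v$-edge, which makes $f(\bm{u}-\bm{u_h})$ single valued there; the interface integral then collapses to $\sum_{T^x}\int_{T^x}\int_{\Gamma_v^0}f(\bm{u}-\bm{u_h})\cdot[\![\theta_f]\!]$ and cancels the surviving flux term, while the boundary edges contribute nothing since $f$ is compactly supported in $v$. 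Thus the first bracket equals $\sum_R\int_R(\bm{u}-\bm{u_h})\cdot\nabla_v f\,\theta_f\,{\rm d}v\,{\rm d}x$, one of the three terms in the claim.

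For the second bracket I would set $\bm{c} := \bm{u_h}-v$ and use $f-f_h = \theta_f-\eta_f$, $\Pi_h f = f+\eta_f$, and $[\![f]\!]=0$. Its volume part is $-\sum_R\int_R\theta_f\,\bm{c}\cdot\nabla_v\theta_f + \sum_R\int_R\eta_f\,\bm{c}\cdot\nabla_v\theta_f$, while the linearity of $g\mapsto\{\bm{c}g\}_\beta$ together with $\{\bm{c}f\}_\beta = \bm{c}f$ for continuous $f$ gives $f\,\bm{c} - \{\bm{c}f_h\}_\beta = \{\bm{c}\theta_f\}_\beta - \{\bm{c}\eta_f\}_\beta$ on each edge of $\Gamma_v$. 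The $\eta_f$-contributions, namely $\sum_R\int_R\eta_f\,\bm{c}\cdot\nabla_v\theta_f - \sum_{T^x}\int_{T^x}\int_{\Gamma_v}\{\bm{c}\eta_f\}_\beta\cdot[\![\theta_f]\!]$, are exactly $\mathcal{K}^2(\bm{u_h}-v,f,\theta_f)$ as in \eqref{K2}, so that it remains to verify
\[
-\sum_R\int_R\theta_f\,\bm{c}\cdot\nabla_v\theta_f\,{\rm d}v\,{\rm d}x + \sum_{T^x}\int_{T^x}\int_{\Gamma_v}\{\bm{c}\theta_f\}_\beta\cdot[\![\theta_f]\!]\,{\rm d}s(v)\,{\rm d}x = \sum_{T^x}\int_{T^x}\int_{\Gamma_v}\frac{\vert(\bm{u_h}-v)\cdot\bm{n}\vert}{2}\vert[\![\theta_f]\!]\vert^2\,{\rm d}s(v)\,{\rm d}x - \sum_R\int_R\theta_f^2\,{\rm d}v\,{\rm d}x .
\]

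To prove this last identity I would write $-\sum_R\int_R\theta_f\,\bm{c}\cdot\nabla_v\theta_f = -\tfrac{1}{2}\sum_R\int_R\bm{c}\cdot\nabla_v\theta_f^2$ and integrate by parts in $v$; since $\nabla_v\cdot\bm{c} = \nabla_v\cdot(\bm{u_h}-v) = -2$, this yields $-\tfrac{1}{2}\sum_R\int_{T^x}\int_{\partial T^v}(\bm{c}\cdot\bm{n})\theta_f^2 - \sum_R\int_R\theta_f^2$, the last summand being precisely the $-\theta_f^2$ term in the statement. The residual boundary expression is then reorganised edge by edge, using $\{\bm{c}\theta_f\}_\beta\cdot[\![\theta_f]\!] = (\bm{c}\cdot[\![\theta_f]\!])\{\theta_f\} + \tfrac{\vert\bm{c}\cdot\bm{n}\vert}{2}\vert[\![\theta_f]\!]\vert^2$ and $[\![\theta_f^2]\!] = 2\{\theta_f\}[\![\theta_f]\!]$: on each interior $v$-edge the mean-flux contribution $(\bm{c}\cdot[\![\theta_f]\!])\{\theta_f\}$ cancels against the interface term $-\tfrac{1}{2}\bm{c}\cdot[\![\theta_f^2]\!]$ produced by the integration by parts, so that only the dissipative term $\tfrac{\vert\bm{c}\cdot\bm{n}\vert}{2}\vert[\![\theta_f]\!]\vert^2$ survives, and the boundary edges again vanish by compact support in $v$. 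Collecting the two brackets then yields the claimed identity.

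I expect the main obstacle to be bookkeeping rather than analysis: when element-boundary integrals are summed into edge integrals one must keep $\bm{n}^- = -\bm{n}^+$, the averages, the weighted averages $\{\cdot\}_\beta$ and the jumps mutually consistent, and one must track which quantities are continuous across a $v$-edge (namely $\bm{u}$, $\bm{u_h}$, $v$, hence $\bm{c}$, and $f$) versus which ones jump (namely $f_h$, $\theta_f$, $\eta_f$). The edge-by-edge step isolating the single term $\tfrac{\vert(\bm{u_h}-v)\cdot\bm{n}\vert}{2}\vert[\![\theta_f]\!]\vert^2$ is where sign errors are easiest to make; otherwise the identity is exact and no estimate is needed.
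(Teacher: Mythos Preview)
Your proposal is correct and follows precisely the standard argument the paper invokes by citing \cite[Lemma~4.5]{de2012discontinuous} with $E=\bm{u}-v$, $E_h=\bm{u_h}-v$: insert $\mathcal{N}(\bm{u_h};f,\theta_f)$, integrate by parts in $v$ in each bracket, and use the continuity of $\bm{u_h}-v$ and of $f$ across $v$-edges together with $[\![\theta_f^2]\!]=2\{\theta_f\}[\![\theta_f]\!]$ to isolate the upwind term and the $-\theta_f^2$ contribution coming from $\nabla_v\cdot(\bm{u_h}-v)=-2$. The bookkeeping you flag is indeed the only delicate point, and you have handled it correctly.
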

	The proof of the above Lemma is similar to the proof of Lemma $4.5$ in \cite{de2012discontinuous} by replacing $E = \bm{u} - v$ and $E_h = \bm{u_h} - v$. 
	
	
	After choosing $\psi_h = \theta_f$ in error equation \eqref{error} and a use of \eqref{p} and Lemma \ref{lemn}, we can rewrite our error equation as
	\begin{equation}\label{err1}
		\begin{aligned}
			&\left(\partial_t\theta_f, \theta_f\right) + \sum_{T^v \in \mathcal{T}_{h}^v}\int_{T^v}\int_{\Gamma_x}\frac{|v\cdot \bm{n}|}{2}\vert[\![\theta_f]\!]\vert^2\,{\rm d}s(x)\,{\rm d}v  + \sum_{T^x \in \mathcal{T}_{h}^x}\int_{T^x}\int_{\Gamma_v}\frac{|(\bm{u_h} - v)\cdot \bm{n}|}{2}\vert[\![\theta_f]\!]\vert^2\,{\rm d}s(v)\,{\rm d}x
			\\
			& = \left(\partial_t\eta_f, \theta_f\right) - \mathcal{K}^1(v,\eta_f,\theta_f) - \sum_{R \in \mathcal{T}_{h}}\int_{R} \left((\bm{u-u_h})\cdot\nabla_v f\,\theta_f - \theta^2_f\right)\,{\rm d}v\,{\rm d}x - \mathcal{K}^2(\bm{u_h} - v,f,\theta_f) ,
		\end{aligned}
	\end{equation}	
	where,
	\begin{equation}\label{K1}
		\mathcal{K}^1(v,\eta_f,\theta_f) = \sum_{R \in \mathcal{T}_h}\int_{R} \eta_f v\cdot\nabla_x \theta_f\,{\rm d}x\,{\rm d}v - \sum_{T^v \in \mathcal{T}_{h}^v}\int_{T^v}\int_{\Gamma_x}\{v \eta_f\}_\alpha\cdot[\![\theta_f]\!]\,{\rm d}s(x)\,{\rm d}v\, .
	\end{equation}

	
	To show the estimate on $e_f := \theta_f - \eta_f$, it is enough to show the estimate on $\theta_f$ since from Lemma \ref{westimate} estimate on $\eta_f$ are known.\\
	
	Let $\partial T^x = e_1^\pm \cup e_2^\pm$ with $e_i^\pm$ denoting the edges of $\partial T^x$ in the $x_i$-direction and
	\begin{equation*}\label{e}
		e_i^+ = \{e \subset \partial T^x : v\cdot \bm{n} > 0\} \qquad e_i^- = \{e \subset \partial T^x : v\cdot \bm{n} < 0\} \qquad i = 1,2. 
	\end{equation*}	
	Similarly $\partial T^v = \gamma_1^\pm \cup \gamma_2^\pm$ with $\gamma_i^\pm$ denoting the edges of $\partial T^v$ in the $v_i$-direction and
	\begin{equation*}\label{g}
		\gamma_i^+ = \{\gamma \subset \partial T^v : (\bm{u_h} - v)\cdot \bm{n} > 0\} \qquad \gamma_i^- = \{\gamma \subset \partial T^v : (\bm{u_h} - v)\cdot \bm{n} < 0\} \qquad i = 1,2.
	\end{equation*}

	\begin{lem}\label{K1estimate}
		Let $k \geq 1$ and let $f \in C^0([0,T]; W^{1,\infty}(\Omega) \cap H^{k+2}(\Omega))$ be the distribution function solution of \eqref{eq:continuous-model}. Let $f_h(t) \in \mathcal{Z}_h$ be its approximation satisfying \eqref{bh} and let $\mathcal{K}^1(v,\eta_f,\theta_f)$ be defined as in \eqref{K1}. Assume that the partition $\mathcal{T}_h$ is constructed so that none of the components of $v$ vanish inside any element. Then, the following estimates holds:
		\begin{equation}\label{k1estimate}
			|\mathcal{K}^1(v,\eta_f,\theta_f)| \leq Ch^{k+1}\left(\|f\|_{k+1,\Omega} + L\|f\|_{k+2,\Omega}\right)\|\theta_f\|_{0,\mathcal{T}_h},
		\end{equation} 
		for all $t \in [0,T]$.
	\end{lem}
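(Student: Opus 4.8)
The plan is to estimate the two pieces of $\mathcal{K}^1(v,\eta_f,\theta_f)$ in \eqref{K1} separately, using the approximation property \eqref{w} of the special projection $\Pi_h$ and the key orthogonality built into $\pi^{\pm}$. First I would treat the volume term $\sum_{R}\int_{R}\eta_f\, v\cdot\nabla_x\theta_f\,{\rm d}x\,{\rm d}v$. On each element $R=T^x\times T^v$, since no component of $v$ vanishes inside $T^x$ the projection $\tilde{\Pi}_x$ reduces on that element to a fixed tensor product $\pi^{\pm}_{x,1}\otimes\pi^{\pm}_{x,2}$; by the defining property \eqref{pi+}--\eqref{pi-}, $\eta_f=\Pi_h(f)-f$ is $L^2(T^x)$-orthogonal to $\mathbb{P}^{k-1}(T^x)$ in each $x_i$-variable. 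Since $\nabla_x\theta_f|_{T^x}$ has degree at most $k-1$ in the differentiated variable, one can subtract from $v$ a piecewise-constant (in $x$) value $\bar v$ on $T^x$ and use orthogonality to kill the leading part, leaving $\int_R(\eta_f)(v-\bar v)\cdot\nabla_x\theta_f$, which by Cauchy--Schwarz, the bound $|v-\bar v|\lesssim h$ on $T^x$ (here $v$ ranges only over $\Omega_v$, so this is the $h_v$ mesh size times a derivative bound — more precisely one picks $\bar v$ constant so the bound is $\lesssim h\, \|\theta_f\|$ absorbing the factor from $T^v$), the inverse inequality \eqref{eq:inverse} to replace $\|\nabla_x\theta_f\|_{0,R}$ by $h^{-1}\|\theta_f\|_{0,R}$, and \eqref{w} to bound $\|\eta_f\|_{0,R}\le Ch^{k+1}\|f\|_{k+1,R}$. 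Summing over $R$ and using Cauchy--Schwarz in $\ell^2$ over elements gives a contribution $\lesssim h^{k+1}\|f\|_{k+1,\Omega}\|\theta_f\|_{0,\mathcal{T}_h}$.

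Next I would handle the edge term $\sum_{T^v}\int_{T^v}\int_{\Gamma_x}\{v\eta_f\}_\alpha\cdot[\![\theta_f]\!]\,{\rm d}s(x)\,{\rm d}v$. The matching conditions $\pi^+(w(x^+))=w(x^+)$ and $\pi^-(w(x^-))=w(x^-)$ are designed precisely so that on any edge the upwind value of $\eta_f$ (determined by the sign of $v\cdot\bm n$) vanishes; this is the standard mechanism from \cite{lesaint1974finite,ayuso2009discontinuous,de2012discontinuous}. Thus $\{v\eta_f\}_\alpha\cdot\bm n$ reduces to the \emph{non-upwind} contribution of $\eta_f$, which is controlled on the edge by the trace part of \eqref{w}, namely $\|\eta_f\|_{0,e}\le Ch^{k+1/2}\|f\|_{k+1,R}$, where now one uses $f\in H^{k+2}$ so that the trace estimate on the $\Gamma_x$-faces carries the extra regularity (this is where the $\|f\|_{k+2,\Omega}$ term enters, after summing faces against the full $H^{k+2}$ norm). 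Bounding $v$ by $L$ on $\Omega_v$, applying Cauchy--Schwarz on each face, the trace inequality \eqref{traceeqn} to convert $\|[\![\theta_f]\!]\|_{0,e}$ into $h^{-1/2}\|\theta_f\|_{0,R}$ (or keeping the jump norm and absorbing later), and summing over faces and over $T^v$ yields a contribution $\lesssim L\,h^{k+1}\|f\|_{k+2,\Omega}\|\theta_f\|_{0,\mathcal{T}_h}$. Combining the two estimates gives \eqref{k1estimate}.

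The main obstacle is bookkeeping the projection $\Pi_h$ correctly across elements where the sign pattern of the components of $v$ changes: the hypothesis that the mesh is constructed so no component of $v$ vanishes inside an element is exactly what guarantees that on each $R$ the operator $\tilde{\Pi}_x$ is a \emph{single} fixed tensor-product $\pi^{\pm}_{x,1}\otimes\pi^{\pm}_{x,2}$ (rather than the $\tilde{\mathcal P}_v$-type mixed case), so that both the $\mathbb{P}^{k-1}$-orthogonality in the volume term and the one-sided matching condition on each face are available simultaneously and consistently with the upwind flux weights $\alpha=\tfrac12(1\pm\operatorname{sign}(v\cdot\bm n^{\pm}))$. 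Once that structural point is pinned down, the rest is a routine assembly of the inverse, trace, and projection estimates already recorded in the excerpt; the remaining care is simply to check that the powers of $h$ (using $h=\max(h_x,h_v)$ and quasi-uniformity) and the factors of $L$ match the claimed bound.
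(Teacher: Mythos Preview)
Your outline captures the right mechanism in one dimension (Radau orthogonality for the volume term, upwind matching for the edge term), but it does not survive the tensor-product structure here. For the volume term, the test function $\bar v_i\,\partial_{x_i}\theta_f$ has degree $k-1$ in $x_i$ but degree $k$ in $x_j$ ($j\neq i$); since $\tilde\Pi_x=\pi^{\pm}_{x,1}\otimes\pi^{\pm}_{x,2}$ is orthogonal only to $\mathbb P^{k-1}$ in each separate variable, the piece of $\eta_f$ coming from $(\pi^{\pm}_{x,j}-I)$ is \emph{not} annihilated, and neither is the $v$-projection piece $I_x\otimes(\tilde\Pi_v-I)$, which carries no $x$-orthogonality at all. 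Your $\bar v$ subtraction lowers the $v$-degree of the test function but does nothing about this $x_j$-degree obstruction; a crude Cauchy--Schwarz on the leftover pieces followed by the inverse inequality leaves you at $O(h^k)$, one power short. A parallel issue afflicts the edge term: the matching condition kills the upwind trace only for the $(\pi^{\pm}_{x,i}-I)$-piece on $x_i$-faces, not for the remaining pieces of $\eta_f$; and since the upwind flux $\{v\eta_f\}_\alpha$ \emph{is} the upwind value, a vanishing upwind trace would make the face term zero outright rather than ``reduce to the non-upwind contribution.''

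The paper itself does not give a proof but refers to Lemma~4.2 of \cite{de2012discontinuous}. The argument there telescopes $\Pi_h-I$ into one-directional pieces (schematically $(\pi_{x,1}-I)\otimes\pi_{x,2}\otimes\tilde\Pi_v + I\otimes(\pi_{x,2}-I)\otimes\tilde\Pi_v + I_x\otimes(\tilde\Pi_v-I)$) and, for each piece, combines the volume and face contributions in the direction where that piece carries the Radau structure (orthogonality plus matching). The cross-terms that survive are then controlled by applying the approximation estimate to a \emph{derivative} of $f$, at the cost of one extra order of regularity. This is precisely why the bound requires $\|f\|_{k+2,\Omega}$ rather than merely $\|f\|_{k+1,\Omega}$; in your sketch the $H^{k+2}$ norm appears only via a trace bound on the edge term, whereas in the actual proof it is the device that rescues the volume cross-terms as well.
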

	
	The proof of the above Lemma is similar to the proof of Lemma $4.2$ in \cite{de2012discontinuous}. 
	

	\begin{lem}\label{K2estimate}
		Let $\mathcal{T}_h$ be a Cartesian mesh of $\Omega$, $k \geq 1$ and let $(\bm{u_h},f_h) \in U_h\times \mathcal{Z}_h$ be the solution to \eqref{bh}. Let $(\bm{u},f) \in L^\infty([0,T];\,\bm{W^{1,\infty}\cap H^{k+1}})\times L^\infty([0,T];\,W^{1,\infty}(\Omega)\cap H^{k+2}(\Omega))$ and let $\mathcal{K}^2$ is defined as in \eqref{K2}. Then, the following estimate holds
		\begin{equation}\label{k2estimate}
			\begin{aligned}
				|\mathcal{K}^2(\bm{u_h} - v,f,\theta_f)| \leq& Ch^k\|\bm{u_h - u}\|_{\bm{L^\infty}}\|f\|_{k+1,\Omega}\|\theta_f\|_{0,\mathcal{T}_h} 
				\\
				&+ Ch^{k+1}\left(\|\bm{u}\|_{\bm{W^{1,\infty}}}\|f\|_{k+1,\Omega} + \left(L + \|\bm{u}\|_{\bm{L^\infty}}\right)\|f\|_{k+2,\Omega}\right)\|\theta_f\|_{0,\mathcal{T}_h}.
			\end{aligned}
		\end{equation}		
	\end{lem}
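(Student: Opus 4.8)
The plan is to split the velocity advection field as $\bm{u_h} - v = (\bm{u_h} - \bm{u}) + (\bm{u} - v)$ and, since $\mathcal{K}^2$ is linear in this field, to decompose $\mathcal{K}^2(\bm{u_h} - v, f, \theta_f) = \mathcal{K}^2_{\rm I} + \mathcal{K}^2_{\rm II}$ with
\[
\mathcal{K}^2_{\rm I} := \sum_{R}\int_R\eta_f(\bm{u_h} - \bm{u})\cdot\nabla_v\theta_f\,{\rm d}v\,{\rm d}x - \sum_{T^x}\int_{T^x}\int_{\Gamma_v}\{(\bm{u_h} - \bm{u})\eta_f\}_\beta\cdot[\![\theta_f]\!]\,{\rm d}s(v)\,{\rm d}x,
\]
\[
\mathcal{K}^2_{\rm II} := \sum_{R}\int_R\eta_f(\bm{u} - v)\cdot\nabla_v\theta_f\,{\rm d}v\,{\rm d}x - \sum_{T^x}\int_{T^x}\int_{\Gamma_v}\{(\bm{u} - v)\eta_f\}_\beta\cdot[\![\theta_f]\!]\,{\rm d}s(v)\,{\rm d}x,
\]
where in both sums $\beta$ remains the upwind weight associated with $\bm{u_h} - v$. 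The two pieces are handled by different mechanisms and produce, respectively, the first and the second term of \eqref{k2estimate}.

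For $\mathcal{K}^2_{\rm I}$ I would factor $\|\bm{u_h - u}\|_{\bm{L^\infty}}$ out of both sums and estimate the rest by the Cauchy--Schwarz inequality. In the volume term, \eqref{wprojection} gives $\|\eta_f\|_{0,\mathcal{T}_h}\lesssim h^{k+1}\|f\|_{k+1,\Omega}$ while the inverse inequality \eqref{eq:inverse} in the $v$-variable gives $\|\nabla_v\theta_f\|_{0,\mathcal{T}_h}\lesssim h^{-1}\|\theta_f\|_{0,\mathcal{T}_h}$; in the boundary term, the edge part of \eqref{wprojection} gives $\|\eta_f\|_{0,\mathcal{T}_h^x\times\Gamma_v}\lesssim h^{k+1/2}\|f\|_{k+1,\Omega}$ while the trace inequality \eqref{traceeqn} in $v$ gives $\|[\![\theta_f]\!]\|_{0,\mathcal{T}_h^x\times\Gamma_v}\lesssim h^{-1/2}\|\theta_f\|_{0,\mathcal{T}_h}$. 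Both contributions are thus $\lesssim h^{k}\|\bm{u_h - u}\|_{\bm{L^\infty}}\|f\|_{k+1,\Omega}\|\theta_f\|_{0,\mathcal{T}_h}$, which is the first term in \eqref{k2estimate}.

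For $\mathcal{K}^2_{\rm II}$ the special projection $\Pi_h$ must be used essentially, since a naive Cauchy--Schwarz bound of the form above only yields $O(h^{k})$. The point is that on each element the branch of $\Pi_h$ is tailored to the field $\bm{u} - v$: in the constant-sign cells $\tilde\Pi_v$ is a tensor product of the one-dimensional Gauss--Radau projections $\pi^{\pm}$, whose matching conditions force $\eta_f$ to vanish on the $v$-outflow faces and whose $\mathbb{P}^{k-1}$-orthogonality cancels the leading volume contribution, and in the sign-changing cells $\tilde{\mathcal{P}}_v$ supplies the $L^2$-orthogonality of $\mathcal{P}_{v,i}$. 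Manipulating $\mathcal{K}^2_{\rm II}$ as in the treatment of the analogous term in \cite{de2012discontinuous} --- combining the volume and flux integrals, invoking these orthogonality and matching properties, splitting $\bm{u} - v$ into its part that is a polynomial of degree $\le k-1$ in each $v_i$ on the cell (on which the cancellations act, leaving residual factors $v_i - \bar v_i = O(h_v)$ with one factor of $v$ controlled by $|v|\le\sqrt{2}\,L$) and its non-polynomial-in-$x$ part (replaced by a local polynomial approximation with error $O(h_x\|\bm{u}\|_{\bm{W^{1,\infty}}})$), and using $\nabla_v\cdot(\bm{u} - v) = -2$ --- reduces $\mathcal{K}^2_{\rm II}$ to $O(h^{k+1})$ remainders. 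Bounding these by \eqref{wprojection}, \eqref{eq:inverse}, \eqref{traceeqn} and $|v|\le\sqrt{2}\,L$ (the factor $\|f\|_{k+2,\Omega}$ appearing, exactly as in \eqref{k1estimate}, from the portion of the advection that raises the polynomial degree in $v$ past the exactness of $\Pi_h$) yields $|\mathcal{K}^2_{\rm II}|\lesssim h^{k+1}\big(\|\bm{u}\|_{\bm{W^{1,\infty}}}\|f\|_{k+1,\Omega} + (L + \|\bm{u}\|_{\bm{L^\infty}})\|f\|_{k+2,\Omega}\big)\|\theta_f\|_{0,\mathcal{T}_h}$. One extra point to check: $\beta$ in $\mathcal{K}^2_{\rm II}$ is the upwind weight of $\bm{u_h} - v$ while the $\Pi_h$-branch is chosen by the sign of $(\bm{u} - v)\cdot\bm{n}$; these disagree only on edge portions where $|(\bm{u} - v)\cdot\bm{n}|\le\|\bm{u_h - u}\|_{\bm{L^\infty}}$, so that discrepancy is absorbed into a term of the size of the first summand of \eqref{k2estimate}. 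Adding the estimates for $\mathcal{K}^2_{\rm I}$ and $\mathcal{K}^2_{\rm II}$ gives \eqref{k2estimate}.

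The hard part is precisely the recovery of the extra power of $h$ in $\mathcal{K}^2_{\rm II}$: one must use the Gauss--Radau orthogonality and the outflow matching of $\Pi_h$ sharply enough that the would-be $O(h^{k})$ terms cancel exactly, which forces a careful separation of the polynomial-in-$v$ part of the advection field (on which the cancellation occurs) from its genuinely non-polynomial part (which produces the $\|\bm{u}\|_{\bm{W^{1,\infty}}}$ term), and a careful tracking of the $v$- (hence $L$-) dependence of every surviving term. The rest is routine bookkeeping, parallel to \cite{de2012discontinuous}.
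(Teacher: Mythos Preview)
Your proposal is correct and follows essentially the same approach as the paper, which does not give a detailed proof but simply states that the argument is similar to \cite[Lemma~4.3]{de2012discontinuous} with $E=\bm{u}-v$ and $E_h=\bm{u_h}-v$; your decomposition $\mathcal{K}^2=\mathcal{K}^2_{\rm I}+\mathcal{K}^2_{\rm II}$, the crude inverse/trace estimate for $\mathcal{K}^2_{\rm I}$, the use of the Gauss--Radau orthogonality and outflow matching of $\Pi_h$ for $\mathcal{K}^2_{\rm II}$, and the handling of the $\beta$-mismatch on sign-changing edge portions are precisely the ingredients of that referenced proof. In short, you have supplied the details the paper omits.
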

	
	The proof of the above Lemma is similar to the proof of Lemma $4.3$ in \cite{de2012discontinuous} by replacing $E = \bm{u} - v$ and $E_h = \bm{u_h} - v$.


%
%
%
	\subsection{Optimal error estimates:}

	\begin{thm}\label{fl2}
		Let $k \geq 1$ and let $f \in \mathcal{C}^1([0,T];H^{k+2}(\Omega)\cap W^{1,\infty}(\Omega))$ be the solution of the Vlasov-Stokes' problem \eqref{eq:continuous-model}-\eqref{contstokes} with $\bm{u} \in \mathcal{C}^0([0,T];\bm{H^{k+1}}\cap \bm{W^{1,\infty}})$. Further, let $(\bm{u_h},f_h) \in U_h \times \mathcal{Z}_h $ be the dG-dG approximation obtained by solving \eqref{ch} and \eqref{bh}. Then, 
		\[
		\|f(t) - f_h(t)\|_{0,\mathcal{T}_h} \leq C(K^*) h^{k+1} \quad\forall\, t \in [0,T]
		\]
		where $C(K^*)$ depends on the final time $T$, the polynomial degree $k$, the shape regularity of the partition and the continuous solution $(\bm{u},f)$. 
	\end{thm}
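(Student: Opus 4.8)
The plan is to derive a differential inequality for $\|\theta_f(t)\|^2_{0,\mathcal{T}_h}$, where $\theta_f := \Pi_h(f) - f_h$ is the component of the Vlasov error measured against the special projection (cf.\ \eqref{p}), and then close it by Gronwall's inequality. The starting point is the error identity \eqref{err1} tested with $\psi_h = \theta_f$: its left-hand side equals $\tfrac12\tfrac{d}{dt}\|\theta_f\|^2_{0,\mathcal{T}_h}$ plus two edge terms which are manifestly nonnegative and hence may be dropped. It then remains to bound the four terms on the right-hand side of \eqref{err1}.

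First, $(\partial_t\eta_f,\theta_f)$ is controlled by Cauchy--Schwarz together with the projection estimate \eqref{wprojection} applied to $\partial_t f$ (using $f\in\mathcal{C}^1([0,T];H^{k+2})$), giving a bound $\lesssim h^{k+1}\|\partial_t f\|_{k+1,\Omega}\|\theta_f\|_{0,\mathcal{T}_h}$. The term $\mathcal{K}^1(v,\eta_f,\theta_f)$ is handled directly by Lemma \ref{K1estimate}. The volume term splits as $\sum_R\int_R\theta_f^2 = \|\theta_f\|^2_{0,\mathcal{T}_h}$, which feeds straight into the Gronwall step, plus $\sum_R\int_R(\bm{u-u_h})\cdot\nabla_v f\,\theta_f$; since $\bm{u}-\bm{u_h}$ depends only on $x$ and $\nabla_v f\in\bm{L^\infty}$, Cauchy--Schwarz in $v$ and then in $x$ bounds this by $\|\nabla_v f\|_{\bm{L^\infty}}|\Omega_v|^{1/2}\|\bm{u-u_h}\|_{\bm{L^2}}\|\theta_f\|_{0,\mathcal{T}_h}$, and Theorem \ref{ul2estimate} together with $\|f-f_h\|_{0,\mathcal{T}_h}\le\|\theta_f\|_{0,\mathcal{T}_h}+\|\eta_f\|_{0,\mathcal{T}_h}\le\|\theta_f\|_{0,\mathcal{T}_h}+Ch^{k+1}$ converts $\|\bm{u-u_h}\|_{\bm{L^2}}$ into $C(K^*)(h^{k+1}+\|\theta_f\|_{0,\mathcal{T}_h})$. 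Finally, for $\mathcal{K}^2$ we invoke Lemma \ref{K2estimate}: every term there carries $h^{k+1}$ except the leading one, $h^k\|\bm{u_h-u}\|_{\bm{L^\infty}}\|f\|_{k+1,\Omega}\|\theta_f\|_{0,\mathcal{T}_h}$, which naively is only $O(h^k)$.

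The crux, and the main obstacle, is precisely this last $\mathcal{K}^2$ contribution: to recover the optimal order one cannot bound $\|\bm{u_h-u}\|_{\bm{L^\infty}}$ crudely by the constant $K+K^*$. Instead I would apply Lemma \ref{L:uinf} componentwise to obtain $\|\bm{u-u_h}\|_{\bm{L^\infty}}\lesssim h_x\|\bm{u}\|_{\bm{W^{1,\infty}}}+h_x^k\|\bm{u}\|_{\bm{H^{k+1}}}+h_x^{-1}\|\bm{u-u_h}\|_{\bm{L^2}}$, and then reuse Theorem \ref{ul2estimate} on the last term, so that $h^k\|\bm{u_h-u}\|_{\bm{L^\infty}}\lesssim h^{k+1}+h^{k-1}\|f-f_h\|_{0,\mathcal{T}_h}\lesssim h^{k+1}+h^{k-1}\|\theta_f\|_{0,\mathcal{T}_h}$. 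Multiplying by the remaining $\|\theta_f\|_{0,\mathcal{T}_h}$ yields $h^{k+1}\|\theta_f\|_{0,\mathcal{T}_h}+h^{k-1}\|\theta_f\|^2_{0,\mathcal{T}_h}$, and since $k\ge1$ and $h\le1$ the second piece is $\le\|\theta_f\|^2_{0,\mathcal{T}_h}$, hence absorbable by Gronwall. Throughout, every surviving cross term of the form $h^{k+1}\|\theta_f\|_{0,\mathcal{T}_h}$ is split by Young's inequality as $\tfrac12 h^{2(k+1)}+\tfrac12\|\theta_f\|^2_{0,\mathcal{T}_h}$; the self-referential appearance of $\|\theta_f\|_{0,\mathcal{T}_h}$ inside the Stokes error (through the coupling) is exactly what forces the careful tracking here.

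Collecting the estimates gives $\tfrac{d}{dt}\|\theta_f\|^2_{0,\mathcal{T}_h}\le C_1\|\theta_f\|^2_{0,\mathcal{T}_h}+C_2h^{2(k+1)}$ with $C_1,C_2=C(K^*)$ depending on $T$, $k$, the mesh regularity and norms of $(\bm{u},f)$. Since $f_h(0)=\mathcal{P}_h(f_0)$, the initial error is $\theta_f(0)=\Pi_h(f_0)-\mathcal{P}_h(f_0)$, so $\|\theta_f(0)\|_{0,\mathcal{T}_h}\le\|\Pi_h(f_0)-f_0\|_{0,\mathcal{T}_h}+\|f_0-\mathcal{P}_h(f_0)\|_{0,\mathcal{T}_h}\le Ch^{k+1}$ by \eqref{wprojection} and the $L^2$-projection estimate. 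Gronwall's inequality then yields $\|\theta_f(t)\|_{0,\mathcal{T}_h}\le C(K^*)h^{k+1}$ for all $t\in[0,T]$, and the triangle inequality $\|f-f_h\|_{0,\mathcal{T}_h}\le\|\theta_f\|_{0,\mathcal{T}_h}+\|\eta_f\|_{0,\mathcal{T}_h}$ together with \eqref{wprojection} completes the proof. (If one prefers not to take \eqref{uh} as a standing hypothesis, the identical computation is run inside a continuation/bootstrap argument in which the inverse inequality \eqref{eq:inverse} and the freshly derived bound on $\|\bm{u-u_h}\|$ are used to show $\|\bm{u_h}\|_{\bm{L^\infty}}$ remains below $K^*$ for $h$ sufficiently small.)
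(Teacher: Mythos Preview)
Your proposal is correct and follows essentially the same route as the paper: both drop the nonnegative edge terms in \eqref{err1}, bound $(\partial_t\eta_f,\theta_f)$, $\mathcal{K}^1$, and the drift term exactly as you describe, and treat the critical $\mathcal{K}^2$ contribution by combining Lemma \ref{K2estimate} with Lemma \ref{L:uinf} and Theorem \ref{ul2estimate} so that the dangerous $h^k\|\bm{u}-\bm{u_h}\|_{\bm{L^\infty}}$ factor becomes $h^{k+1}+h^{k-1}\|\theta_f\|_{0,\mathcal{T}_h}$, the latter absorbable for $k\ge1$. Your explicit handling of $\theta_f(0)$ and the parenthetical bootstrap remark are slightly more detailed than the paper (which defers the closing of \eqref{uh} to the subsequent Theorem \ref{fi}), but the argument is otherwise the same.
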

	
	\begin{proof}
		Note that
		\begin{equation*}
			\|f - f_h\|_{0,\mathcal{T}_h} \leq \|\eta_f\|_{0,\mathcal{T}_h} + \|\theta_f\|_{0,\mathcal{T}_h}.
		\end{equation*}
		Since from equation \eqref{wprojection}, the estimate of $\|\eta_f\|_{0,\mathcal{T}_h}$ is known, it is enough to estimate $\|\theta_f\|_{0,\mathcal{T}_h}$.\\
		It follows from \eqref{err1} that
		\begin{equation}\label{estimate}
			\begin{aligned}
				\frac{1}{2}\frac{{\rm d}}{{\rm d}t}\|\theta_f\|^2_{0,\mathcal{T}_h} &+ \frac{1}{2}\||v\cdot \bm{n}|^\frac{1}{2}[\![\theta_f]\!]\|^2_{0,\Gamma_x\times \mathcal{T}^v_{h}} + \frac{1}{2}\||(\bm{u_h} - v)\cdot \bm{n}|^\frac{1}{2}[\![\theta_f]\!]\|^2_{0,\mathcal{T}^x_{h}\times \Gamma_v}
				\\
				&= \left(\partial_t\eta_f, \theta_f\right) - \sum_{R \in \mathcal{T}_{h}}\int_R(\bm{u-u_h})\cdot \nabla_v f \theta_f\,{\rm d}v\,{\rm d}x - \mathcal{K}^1(v,\eta_f,\theta_f) 
				\\
				&\quad - \mathcal{K}^2(\bm{u_h} - v,f,\theta_f) + \sum_{R \in \mathcal{T}_{h}}\int_R\theta^2_f\,{\rm d}v\,{\rm d}x
				\\
				&=: I_1 - I_2 - \mathcal{K}^1 - \mathcal{K}^2 + I_3,
			\end{aligned}
		\end{equation}
		where
		\[
		I_1 := \left(\partial_t\eta_f, \theta_f\right), \quad I_2 := \sum_{R \in \mathcal{T}_{h}}\int_R(\bm{u-u_h})\cdot \nabla_v f \theta_f\,{\rm d}v\,{\rm d}x \quad \mbox{and}\quad		
		I_3 := \sum_{R \in \mathcal{T}_{h}}\int_R\theta^2_f\,{\rm d}v\,{\rm d}x.
		\]
		Note that
		\begin{equation}\label{I1estimate}
			\begin{aligned}
				|I_1| &\leq \|\partial_t\eta_f\|_{0,\mathcal{T}_h}\|\theta_f\|_{0,\mathcal{T}_h} \\
				&\leq Ch^{k+1}\|\partial_tf\|_{k+1,\mathcal{T}_h}\|\theta_f\|_{0,\mathcal{T}_h}
				\leq Ch^{2k+2}\|\partial_tf\|^2_{k+1,\mathcal{T}_h} + C\|\theta_f\|^2_{0,\mathcal{T}_h},
			\end{aligned}
		\end{equation}
	where we have used the projection estimate \eqref{wprojection} in the second step. Note that it is here that we require $f$ to be $\mathcal{C}^1$ in the time variable.\\ 
		Observe that 
		\begin{equation}\label{I2estimate}
			\begin{aligned}
				|I_2| & = |\sum_{R \in \mathcal{T}_{h}}\int_R(\bm{u-u_h})\cdot \nabla_v f \theta_f\,{\rm d}v\,{\rm d}x|
				\\
				&\leq \|\bm{u - u_h}\|_{\bm{L^2}}\|\nabla_v f\|_{L^\infty(\Omega)}\|\theta_f\|_{0,\mathcal{T}_h}
				\\
				&\leq \|\bm{u - u_h}\|^2_{\bm{L^2}}\|\nabla_v f\|_{L^\infty(\Omega)} + \|\nabla_v f\|_{L^\infty(\Omega)}\|\theta_f\|^2_{0,\mathcal{T}_h}
				\\
				&\leq C(K^*)h^{2k+2}\|f\|_{W^{1,\infty}(\Omega)}\|f\|_{k+1,\Omega}^2 + C_0(K^*)\|\theta_f\|^2_{0,\mathcal{T}_h},
			\end{aligned}
		\end{equation}
	where we have used the estimate from the Theorem \ref{ul2estimate}. Note that it is here in \eqref{I2estimate} that we require 
	$f$ to be in $W^{1,\infty}$ in the $v$ variable.\\
	The estimate for $\mathcal{K}^1$ in Lemma \ref{K1estimate} yields
		\begin{equation}\label{K1Estimate}
			|\mathcal{K}^1| \leq Ch^{2k+2}\left(\|f\|_{k+1,\Omega} + L\|f\|_{k+2,\Omega}\right)^2 + C\|\theta_f\|^2_{0,\mathcal{T}_h}.
		\end{equation}
		To deal with $\mathcal{K}^2$, we observe that the bound \eqref{k2estimate} in Lemma \ref{K2estimate} yields
		\begin{equation}\label{K2Estimate}
			\begin{aligned}
				|\mathcal{K}^2| &\leq Ch^k\|\bm{u - u_h}\|_{\bm{L^\infty}}\|f\|_{k+1,\Omega}\|\theta_f\|_{0,\mathcal{T}_h} 
				\\
				&+ Ch^{k+1}\left(\|\bm{u}\|_{\bm{W^{1,\infty}}}\|f\|_{k+1,\Omega} + \left(L + \|\bm{u}\|_{\bm{L^\infty}}\right)\|f\|_{k+2,\Omega}\right)\|\theta_f\|_{0,\mathcal{T}_h}
				\\
				&\leq Ch^k\left(h\|\bm{u}\|_{\bm{W^{1,\infty}}} + h^k\|\bm{u}\|_{\bm{H^{k+1}}} + h^{-1}\|\bm{u - u_h}\|_{\bm{L^2}}\right)\|f\|_{k+1,\Omega}\|\theta_f\|_{0,\mathcal{T}_h} 
				\\
				&+ Ch^{k+1}\left(\|\bm{u}\|_{\bm{W^{1,\infty}}}\|f\|_{k+1,\Omega} + \left(L + \|\bm{u}\|_{\bm{L^\infty}}\right)\|f\|_{k+2,\Omega}\right)\|\theta_f\|_{0,\mathcal{T}_h}
				\\
				&\leq Ch^k\left(h\|\bm{u}\|_{\bm{W^{1,\infty}}} + h^k\|\bm{u}\|_{\bm{H^{k+1}}} + h^kC(K^*)\right)\|f\|_{k+1,\Omega}\|\theta_f\|_{0,\mathcal{T}_h} 
				\\
				&+ Ch^{k+1}\left(\|\bm{u}\|_{\bm{W^{1,\infty}}}\|f\|_{k+1,\Omega} + \left(L + \|\bm{u}\|_{\bm{L^\infty}}\right)\|f\|_{k+2,\Omega}\right)\|\theta_f\|_{0,\mathcal{T}_h}
				\\
				&+ C(K^*)\|\theta_f\|^2_{0,\mathcal{T}_h}
				\\
				&\leq C_1(K^*)h^{2k+2} + C_2(K^*)\|\theta_f\|^2_{0,\mathcal{T}_h}.
			\end{aligned}
		\end{equation}
		Here, in the second step, we have used Lemma \ref{L:uinf} which require $\bm{u} \in \bm{W^{1,\infty}}$. In the third step, we employ Theorem \ref{ul2estimate} to estimate $\|\bm{u - u_h}\|_{\bm{L^2}}$.
		
		Substituting the estimates \eqref{I1estimate}-\eqref{K2Estimate} into \eqref{estimate} and using the fact that the last two terms on the left hand side are non-negative, we obtain
		\begin{equation}\label{gron}
			\begin{aligned}
				\frac{{\rm d}}{{\rm d}t}\|\theta_f\|^2_{0,\mathcal{T}_h} \leq C_3(K^*)h^{2k+2} + C_4(K^*)\|\theta_f\|^2_{0,\mathcal{T}_h}.
			\end{aligned}
		\end{equation}
		A standard application of the Gr\"onwall's inequality shows
		\[
		\|\theta_f(t)\|^2_{0,\mathcal{T}_h} \leq C(K^*) h^{2k+2},
		\]
		where $C(K^*)$ is now independent of $h$ and $f_h$, and depends on $t$ and on the solution $(\bm{u},f)$ through its norm. This completes the proof.	
	\end{proof}


	\begin{thm}\label{fi}
		Let $k \geq 1$ and let $(\bm{u},p,f) \in \mathcal{C}^0([0,T];\bm{H^{k+1}\cap W^{1,\infty}}) \times \mathcal{C}^0([0,T];\\H^k(\Omega_x)) \times \mathcal{C}^1([0,T];H^{k+2}(\Omega)\cap W^{1,\infty}(\Omega)) $ be the solution of the Vlasov-Stokes' equation \eqref{eq:continuous-model}-\eqref{contstokes}. Further, let $(\bm{u_h},p_h,f_h) \in \mathcal{C}^0([0,T];U_h) \times \mathcal{C}^0([0,T];P_h) \times \mathcal{C}^1([0,T];\mathcal{Z}_h)$ be the dG-dG approximation solution obtained by solving \eqref{bh}-\eqref{ch}. Then, there holds 
		\[
		\|f(t) - f_h(t)\|_{0,\mathcal{T}_h} + \|(\bm{u - u_h})(t)\|_{\bm{L^2}} + h\|p - p_h\|_{0,\mathcal{T}_h^x} \leq Ch^{k+1} \quad \forall\,\,t \in [0,T]
		\] 
		where $C$ depends on the final time $T$, the polynomial degree $k$, the shape regularity of the partition and the norm of $(\bm{u},f)$.
	\end{thm}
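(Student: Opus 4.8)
The plan is to obtain all three error bounds from the estimates already in hand, Theorem~\ref{fl2} for $f$ and Theorem~\ref{ul2estimate} for $(\bm u,p)$, and then to strengthen the result by removing the a priori hypothesis \eqref{uh}. Theorem~\ref{fl2} gives $\|(f-f_h)(t)\|_{0,\mathcal T_h}\le C(K^*)h^{k+1}$ directly; feeding this into Theorem~\ref{ul2estimate} yields $\|(\bm u-\bm u_h)(t)\|_{\bm L^2}\le C(K^*)h^{k+1}$ and, after dividing the pressure bound by $h_x$ and recalling that the mesh sizes $h_x$ and $h$ are comparable, $h\|(p-p_h)(t)\|_{0,\mathcal T_h^x}\le C(K^*)h^{k+1}$. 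Thus the only thing to add is the verification that $\|\bm u_h\|_{L^\infty([0,T];\bm L^\infty)}\le K^*$ holds \emph{automatically} for $h$ small with, say, $K^*=2K$; once this is secured the constant no longer depends on $K^*$ but only on $K=\|\bm u\|_{L^\infty([0,T];\bm L^\infty)}$ and the remaining stated data.

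I would carry this out by a continuation (bootstrap) argument. Fix $K^*=2K$ and let $C(K^*)$ denote the now-frozen constants furnished by Theorems~\ref{fl2} and~\ref{ul2estimate} under the hypothesis \eqref{uh}. Since $\bm u_h\in\mathcal C^0([0,T];U_h)$ by assumption, $t\mapsto\|\bm u_h(t)\|_{\bm L^\infty}$ is continuous, so $\{\tau\in[0,T]:\|\bm u_h(t)\|_{\bm L^\infty}\le K^*\ \text{for all }t\in[0,\tau]\}$ is a closed interval $[0,t^\star]$, with $t^\star>0$ to be addressed below. On $[0,t^\star]$ the hypothesis \eqref{uh} is in force, hence the conclusions of Theorems~\ref{fl2} and~\ref{ul2estimate} are valid there; in particular $\|(\bm u-\bm u_h)(t)\|_{\bm L^2}\le C(K^*)h^{k+1}$ for all $t\le t^\star$.

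The next step is to convert this $L^2$ bound into an $L^\infty$ bound on $\bm u-\bm u_h$, exactly by the argument of Lemma~\ref{L:uinf} applied componentwise. Writing $\bm u-\bm u_h=(\bm u-\bm{\mathcal P_x u})+(\bm{\mathcal P_x u}-\bm u_h)$, the first term is bounded by $C\big(h\|\bm u\|_{\bm W^{1,\infty}}+h^k\|\bm u\|_{\bm H^{k+1}}\big)$ by standard approximation, while the inverse inequality \eqref{eq:inverse}, the triangle inequality and a projection estimate give $\|\bm{\mathcal P_x u}-\bm u_h\|_{\bm L^\infty}\le Ch_x^{-1}\|\bm{\mathcal P_x u}-\bm u_h\|_{\bm L^2}\le Ch^{-1}\big(h^{k+1}\|\bm u\|_{\bm H^{k+1}}+\|\bm u-\bm u_h\|_{\bm L^2}\big)\le C(K^*)h^{k}$. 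Since $k\ge1$ every exponent here is at least $1$, so $\|(\bm u-\bm u_h)(t)\|_{\bm L^\infty}\le C(K^*)h$ on $[0,t^\star]$, whence $\|\bm u_h(t)\|_{\bm L^\infty}\le K+C(K^*)h$ there. Choosing $h_0$ small enough that $C(K^*)h_0<K$, we get the \emph{strict} inequality $\|\bm u_h(t)\|_{\bm L^\infty}<2K=K^*$ on $[0,t^\star]$ for every $h\le h_0$; by continuity this is incompatible with $t^\star<T$, so $t^\star=T$. Hence \eqref{uh} holds unconditionally for $h\le h_0$, the constant $C(K^*)$ is a fixed number depending only on $K$ and the stated data, and reading off the $f$-, $\bm u$- and $p$-estimates of Theorems~\ref{fl2} and~\ref{ul2estimate} gives the claim.

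The main obstacle is making this continuation argument genuinely non-circular: the constants $C(K^*)$ must be frozen \emph{before} $h_0$ is chosen, and one must secure the base case $t^\star>0$, i.e. $\|\bm u_h(0)\|_{\bm L^\infty}<K^*$, from the fact that $\bm u_h(0)$ is the discrete Stokes solution determined by $f_h(0)=\mathcal P_h f_0$ together with the assumed continuity $\bm u_h\in\mathcal C^0([0,T];U_h)$ — this is delicate because even at $t=0$ the $L^2$ error bound carries the constant $C(K^*)$. A secondary, bookkeeping point is to check that the single power of $h$ lost in the inverse inequality is precisely compensated by the extra power in the $L^2$-rate $h^{k+1}$, which is exactly why $k\ge1$ (rather than $k\ge0$) is needed here.
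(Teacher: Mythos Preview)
Your approach is essentially the same as the paper's: combine Theorems~\ref{fl2} and~\ref{ul2estimate} to get the three error bounds with constants depending on $K^*$, then invoke Lemma~\ref{L:uinf} to turn the $L^2$ velocity bound into an $L^\infty$ bound and thereby close the bootstrap $\|\bm u_h\|_{\bm L^\infty}\le 2K=K^*$ for $h$ small. The paper's argument is in fact terser and leaves the continuation step implicit (it simply writes $\|\bm u_h\|_{L^\infty([0,T];\bm L^\infty)}\le C(K^*)h+K\le 2K$), whereas you have spelled out the continuity-in-time mechanism and flagged the $t=0$ base case; your version is a more careful rendering of the same idea.
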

	
	\begin{proof}
		As a consequence of Theorems \ref{ul2estimate} and \ref{fl2}, we arrive
		\begin{equation*}
			\|(\bm{u - u_h})(t)\|_{\bm{L^2}} \leq C(K^*)h^{k+1}, \qquad
			h\|p - p_h\|_{0,\mathcal{T}_h^x} \leq Ch^{k+1},
		\end{equation*}
		and
		\begin{equation*}
			\|(f - f_h)(t)\|_{0,\mathcal{T}_h} \leq C(K^*)h^{k+1}.
		\end{equation*}
		Now to complete the proof we need to show $\bm{u_h}$ is indeed bounded. More precisely, we need to show \eqref{uh}.
		Note that, for small $h$ and $k \geq 1$ from Lemma \ref{L:uinf}, we obtain
		\begin{equation*}
			\begin{aligned}
				\|\bm{u_h}\|_{L^\infty([0,T];\bm{L^\infty})} &\leq \|\bm{u - u_h}\|_{L^\infty([0,T];\bm{L^\infty})} + \|\bm{u}\|_{L^\infty([0,T];\bm{L^\infty})}
				\\
				&\leq C(K^*)h + \|\bm{u}\|_{L^\infty([0,T];\bm{L^\infty})}
				\\
				&\leq 2K \leq K^*.
			\end{aligned}
		\end{equation*}
		This completes the proof.
	\end{proof}

\begin{rem}
	Thanks to the estimate in Theorem \ref{fi}, Lemma \ref{tb} yields a super-convergence result:
	\begin{equation*}
		\vertiii{\left(\bm{\widetilde{u}_h - u_h}\right)(t)} \leq Ch^{k+1} \quad \forall \quad t \in (0,T].
	\end{equation*} 
	Since for $1 \leq p < \infty$, Sobolev embedding says
	\begin{equation*}
		\|\left(\bm{\widetilde{u}_h - u_h}\right)(t)\|_{\bm{L^p}} \leq C\vertiii{\left(\bm{\widetilde{u}_h - u_h}\right)(t)} \quad \forall \quad t \in (0,T],
	\end{equation*}
	we have 
	\begin{equation*}
		\|\left(\bm{\widetilde{u}_h - u_h}\right)(t)\|_{\bm{L^p}} \leq Ch^{k+1} \quad \forall \quad t \in (0,T].
	\end{equation*}
	Again as $\Omega_x \subset \R^2$, the Sobolev embedding (refer \cite[Lemma $6.4$, p. $88$]{thomee2007galerkin}) implies
	\begin{equation*}
		\begin{aligned}
			\|\left(\bm{\widetilde{u}_h - u_h}\right)(t)\|_{\bm{L^\infty}} &\leq C \left(log\left(\frac{1}{h}\right)\right)\vertiii{\left(\bm{\widetilde{u}_h - u_h}\right)(t)}
			\\
			&\leq C \left(log\left(\frac{1}{h}\right)\right)h^{k+1} \quad \forall \quad t \in (0,T].
		\end{aligned}
	\end{equation*}
\end{rem}


	\subsection{Existence and uniqueness of the discrete system \eqref{bh} and \eqref{ch}}\label{sec:exst}

	\begin{lem}\label{exst}
		There exists a unique solution $\left(\bm{u_h}, p_h, f_h\right) \in \mathcal{C}^1([0,T;,U_h\times P_h \times \mathcal{Z}_h)$ to the discrete system \eqref{bh} and \eqref{ch}. 
	\end{lem}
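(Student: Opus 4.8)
The plan is to set this up as a fixed-point / ODE argument. Since $U_h$, $P_h$, $\mathcal{Z}_h$ are finite-dimensional, the equations \eqref{bh}--\eqref{ch} constitute a (possibly implicit) system of ODEs in the coefficient vector of $f_h$, coupled to the algebraic constraint \eqref{ch} defining $(\bm{u_h},p_h)$ from $f_h$. The first step is to argue that, given $f_h \in \mathcal{Z}_h$ (hence given $\rho_h$, $\rho_h V_h$), the discrete generalized Stokes' system \eqref{ch} has a unique solution $(\bm{u_h},p_h)\in U_h\times P_h$ \emph{provided $h$ is small}. This is the only place the smallness of $h$ enters: the bilinear form $\mathcal{A}_h(\cdot,\cdot) + (\rho_h\,\cdot\,,\cdot)$ is a bounded perturbation of $\mathcal{A}_h$, which is inf-sup stable by \eqref{dic1}; one writes the perturbed problem as $\mathcal{A}_h((\bm{u_h},p_h),(\bm{\phi_h},q_h)) = (\rho_hV_h - \rho_h\bm{u_h},\bm{\phi_h})$ and applies a Banach fixed-point argument in $U_h\times P_h$, where the contraction constant is controlled by $\|\rho_h\|_{L^\infty(\mathcal{T}_h^x)}$ times an inverse-inequality factor $h^{-1}$ coming from bounding the $L^2$ pairing by the $\vertiii{\cdot}$-norm; alternatively, since $\|\rho_h\|_{0,\mathcal{T}_h^x}\le C(T)L\|f_0\|_{0,\mathcal{T}_h}$ by \eqref{rho1} one uses a norm-comparison estimate \eqref{normcompeqn} to absorb the perturbation into the inf-sup constant for $h\le h_0$. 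This yields a solution map $\Phi: f_h \mapsto (\bm{u_h},p_h)$ that is (locally) Lipschitz as a map $\mathcal{Z}_h \to U_h\times P_h$, since $f_h \mapsto (\rho_h,\rho_hV_h)$ is linear and the dependence of the solution of the perturbed Stokes' system on its data/coefficients is smooth.

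The second step is to substitute $\bm{u_h}=\bm{u_h}(f_h)$ into \eqref{bh}. Choosing a basis of $\mathcal{Z}_h$ and testing \eqref{bh} against each basis function turns \eqref{bh} into $M\,\dot{c}(t) = G(c(t))$ where $c(t)$ is the coefficient vector of $f_h(t)$, $M$ is the (constant, invertible) mass matrix, and $G$ is the vector field assembled from $\sum_R \mathcal{B}_{h,R}(\bm{u_h}(f_h);f_h,\cdot)$. The form $\mathcal{B}_{h,R}$ depends \emph{bilinearly} on $(\bm{u_h},f_h)$ together with the numerical-flux terms, which involve $|(\bm{u_h}-v)\cdot\bm{n}|$ and the sign function; these are Lipschitz (though not $C^1$) in $\bm{u_h}$, hence Lipschitz in $f_h$ via the Lipschitz map $\Phi$. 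Therefore $G$ is locally Lipschitz on $\mathbb{R}^{\dim\mathcal{Z}_h}$, and the Picard--Lindelöf theorem gives a unique maximal $\mathcal{C}^1$ solution $c(t)$ on some interval $[0,T_*)$ with $c(0)$ the coefficient vector of $\mathcal{P}_h(f_0)$.

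The third step is to show $T_* > T$, i.e. global existence on the prescribed interval, which follows from an a priori bound: the stability estimate of Lemma \ref{fhbd} gives $\|f_h(t)\|_{0,\mathcal{T}_h}\le e^T\|f_0\|_{0,\mathcal{T}_h}$, hence $\|\rho_h\|_{0,\mathcal{T}_h^x}$ and $\|\rho_hV_h\|_{0,\mathcal{T}_h^x}$ stay bounded by \eqref{rho1}, and then the inf-sup stability of the perturbed Stokes' system bounds $\vertiii{(\bm{u_h},p_h)}$ in terms of these data. Thus $|c(t)|$ cannot blow up in finite time, so the maximal solution extends to all of $[0,T]$, and uniqueness is inherited from Picard--Lindelöf together with the uniqueness in Step 1. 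Finally, once $f_h\in\mathcal{C}^1([0,T];\mathcal{Z}_h)$ and the Lipschitz map $\Phi$ is applied, one checks $(\bm{u_h},p_h)\in\mathcal{C}^1([0,T];U_h\times P_h)$: differentiating \eqref{ch} in $t$ shows $(\partial_t\bm{u_h},\partial_t p_h)$ solves the same perturbed Stokes' system with right-hand side data $\partial_t(\rho_hV_h-\rho_h\bm{u_h})$, which is continuous in $t$ since $\partial_t f_h$ is; invertibility of the system then gives the claimed $\mathcal{C}^1$ regularity.

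The main obstacle is Step 1 --- controlling the $(\rho_h\bm{u_h},\bm{\phi_h})$ perturbation without coercivity or positivity of $\rho_h$. The perturbation is not small in an absolute sense (it scales like $\|\rho_h\|_{L^\infty}$, which we cannot bound independently of the data), so one must exploit the scaling $\|\rho_h\|_{L^\infty(\mathcal{T}_h^x)}\le Ch_x^{-1}\|\rho_h\|_{0,\mathcal{T}_h^x}$ from \eqref{normcompeqn} carefully; this is exactly why the smallness condition is a \emph{smallness of $h$} relative to $\|f_0\|_{0,\mathcal{T}_h}$, and the bookkeeping of these constants is the technically delicate part.
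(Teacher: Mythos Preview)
Your overall architecture---solve the Stokes block for given $f_h$, feed the resulting $\bm{u_h}(f_h)$ into \eqref{bh} to obtain a locally Lipschitz ODE, invoke Picard--Lindel\"of, then globalize via the $L^2$ stability of Lemma~\ref{fhbd}---matches the paper's. The gap is in Step~1.

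Your proposed control of the perturbation $(\rho_h\bm{u_h},\bm{\phi_h})$ cannot yield smallness as $h\to 0$. The quantity $\|\rho_h\|_{0,\mathcal{T}_h^x}$ is of order one (bounded by \eqref{rho1} but not tending to zero), and the norm-comparison estimate $\|\rho_h\|_{L^\infty}\lesssim h_x^{-1}\|\rho_h\|_{0}$ only makes the perturbation \emph{larger} as $h\to 0$. There is no mechanism in your argument by which the contraction constant, or the perturbation of the inf-sup constant $\alpha$, becomes less than one for small $h$; your closing remark that ``this is exactly why the smallness condition is a smallness of $h$'' is precisely backwards.

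The paper's device is different and essential: it does \emph{not} compare $\mathcal{A}_h + (\rho_h\cdot,\cdot)$ to $\mathcal{A}_h$, but to $\mathcal{A}_h + (\rho\,\cdot,\cdot)$, where $\rho\ge 0$ is the \emph{continuum} local density. Since $\rho\ge 0$, this auxiliary bilinear form is inf-sup stable by Lemma~\ref{exs}. One then rewrites the homogeneous discrete Stokes problem as
\[
\mathcal{A}_h\big((\bm{u_h},p_h),(\bm{\phi_h},q_h)\big) + (\rho\,\bm{u_h},\bm{\phi_h}) = \big((\rho-\rho_h)\bm{u_h},\bm{\phi_h}\big),
\]
and the right-hand side is small because $\|\rho-\rho_h\|_{0,\mathcal{T}_h^x}\lesssim \|f-f_h\|_{0,\mathcal{T}_h}\le Ch^{k+1}$ by Lemma~\ref{rhoh2} and the a~priori error estimate of Theorem~\ref{fi}. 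This gives $\alpha\vertiii{(\bm{u_h},p_h)}\le Ch^{k+1}\vertiii{(\bm{u_h},p_h)}$, hence $(\bm{u_h},p_h)=(0,0)$ for $h$ small. The smallness of $h$ thus enters through the \emph{error} $\rho-\rho_h$, not through $\rho_h$ itself; without this comparison to the continuum density your Step~1 does not close. (The paper's argument, in turn, invokes Theorem~\ref{fi}, which was established under the standing assumption \eqref{uh}; a fully rigorous presentation would embed this in a continuation/bootstrap, but the indispensable idea---compare $\rho_h$ to $\rho$ rather than to~$0$---is what your proposal is missing.)
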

	
	\begin{proof}
		From \eqref{rho1}, we have that $\rho_h, \rho_h V_h \in L^2(\Omega_x)$. Since for a given $\rho_h$, \eqref{ch} leads to a system of linear algebraic equations, it is enough to prove that the corresponding homogeneous system given by 
		\begin{equation*}
			\mathcal{A}_h\left(\left(\bm{u_h}, p_h\right),\left(\bm{\phi_h}, q_h\right)\right) + \left(\rho_h\bm{u_h,\phi_h}\right)= 0
		\end{equation*} 
		has identically zero solution, i.e. $(\bm{u_h},p_h) \equiv (0,0)$.\\
		The above homogeneous system can be rewritten as 
		\begin{equation*}
			\mathcal{A}_h\left(\left(\bm{u_h}, p_h\right),\left(\bm{\phi_h}, q_h\right)\right) + \left(\rho\bm{u_h,\phi_h}\right)= \left(\left(\rho - \rho_h\right)\bm{u_h,\phi_h}\right)
		\end{equation*}
		A use of discrete inf-sup condition \eqref{dic}, the H\"older inequality, Lemma \ref{rhoh2} and Theorem \ref{fi} gives
		\begin{equation*}
			\begin{aligned}
				\alpha\vertiii{(\bm{u_h},p_h)}\, &\leq \sup_{(\bm{\phi_h},q_h) \in U_h \times P_h\setminus\{(0,0)\}}\frac{\mathcal{A}_h\left(\left(\bm{u_h},p_h\right),\left(\bm{\phi_h},q_h\right)\right) + \left(\rho\bm{u_h, \phi_h}\right)}{\vertiii{\left(\bm{\phi_h},q_h\right)}}
				\\
				&= \sup_{\bm{\phi_h} \in U_h\setminus\{0\}}\frac{\left((\rho - \rho_h)\bm{u_h, \phi_h}\right)}{\vertiii{\bm{\phi_h}}}
				\\
				&\leq \sup_{\bm{\phi_h} \in U_h\setminus\{0\}}\frac{\|\rho - \rho_h\|_{0,\mathcal{T}_h}\vertiii{\bm{u_h}}\vertiii{\bm{\phi_h}}}{\vertiii{\bm{\phi_h}}}
				\\
				&\lesssim \|f  - f_h\|_{0,\mathcal{T}_h}\vertiii{\bm{u_h}} 
				\\
				&\leq Ch^{k+1}\vertiii{\bm{u_h}} \leq Ch^{k+1}\vertiii{(\bm{u_h},p_h)}.
			\end{aligned}
		\end{equation*}
		Choosing $h$ small enough so that $\left(\alpha - Ch^{k+1}\right) > 0$ implies that $(\bm{u_h},p_h) \equiv (0,0)$. This completes the proof of existence and uniqueness of solution $(\bm{u_h},p_h)$ for a given $f_h$. 
	
	 Therefore, the solution map $f_h \mapsto \bm{u_h} = S(f_h)$ is well defined. On substituting $S(f_h)$ in place of $\bm{u_h}$ in  equation \eqref{bh} leads to a system of non-linear ODE's. Since the non-linearity is locally Lipschitz, an application of the Picard's theorem yields an existence of unique solution $f_h(t)$ for $t \in (0,t_n^*)$ for some $t_n^*$. Now, using Lemma \ref{fhbd} for boundedness of $f_h$, we can extend our solution to the complete interval $[0,T]$ using continuation arguments. This concludes the proof. 
\end{proof}

\begin{rem}
	With appropriate modifications, it is possible to extend the present analysis to $3$D Vlasov-Stokes' system with order of convergence 
	\[
	\|f(t) - f_h(t)\|_{0,\mathcal{T}_h} + \|\bm{(u - u_h)}(t)\|_{\bm{L^2}} = O(h^{k+1}) \quad \mbox{for} \quad k \geq 2.
	\]
\end{rem}
To expand it a bit in $3$D, norm comparison inequality becomes 
\begin{equation}\label{normcompeqn1}
		\|w_h\|_{L^p(T^x)} \leq Ch_x^{\frac{3}{p}-\frac{3}{q}}\|w_h\|_{L^q(T^x)}.
\end{equation}
Hence, on using \eqref{normcompeqn1}, it follows that
\begin{align*}
		\|\bm{u - u_h}\|_{\bm{L^\infty}} &\leq \|\bm{u - \mathcal{P}_xu}\|_{\bm{L^\infty}} + \|\bm{\mathcal{P}_xu - u_h}\|_{\bm{L^\infty}}
		\\
		&\lesssim h_x\|\bm{u}\|_{\bm{W^{1,\infty}}} + h^{-\frac{3}{2}}_x\|\bm{\mathcal{P}_xu - u_h}\|_{\bm{L^2}}
		\\
		&\lesssim h_x\|\bm{u}\|_{\bm{W^{1,\infty}}} + h^{-\frac{3}{2}}_x\|\bm{u - \mathcal{P}_xu}\|_{\bm{L^2}} + h^{-\frac{3}{2}}_x\|\bm{u - u_h}\|_{\bm{L^2}}
		\\
		&\lesssim h_x\|\bm{u}\|_{\bm{W^{1,\infty}}} + h^{k-\frac{1}{2}}_x\|\bm{u}\|_{\bm{H^{k+1}}} + h^{-\frac{3}{2}}_x\|\bm{u - u_h}\|_{\bm{L^2}}.
\end{align*}
Then, the following changes in \eqref{K2Estimate} as follows
	\begin{equation*}\label{K2Estimate1}
	\begin{aligned}
		|\mathcal{K}^2| &\leq Ch^k\|\bm{u - u_h}\|_{\bm{L^\infty}}\|f\|_{k+1,\Omega}\|\theta_f\|_{0,\mathcal{T}_h} 
		\\
		&+ Ch^{k+1}\left(\|\bm{u}\|_{\bm{W^{1,\infty}}}\|f\|_{k+1,\Omega} + \left(L + \|\bm{u}\|_{\bm{L^\infty}}\right)\|f\|_{k+2,\Omega}\right)\|\theta_f\|_{0,\mathcal{T}_h}
		\\
		&\leq Ch^k\left(h\|\bm{u}\|_{\bm{W^{1,\infty}}} + h^{k-\frac{1}{2}}\|\bm{u}\|_{\bm{H^{k+1}}} + h^{-\frac{3}{2}}\|\bm{u - u_h}\|_{\bm{L^2}}\right)\|f\|_{k+1,\Omega}\|\theta_f\|_{0,\mathcal{T}_h} 
		\\
		&+ Ch^{k+1}\left(\|\bm{u}\|_{\bm{W^{1,\infty}}}\|f\|_{k+1,\Omega} + \left(L + \|\bm{u}\|_{\bm{L^\infty}}\right)\|f\|_{k+2,\Omega}\right)\|\theta_f\|_{0,\mathcal{T}_h}
		\\
		&\leq Ch^k\left(h\|\bm{u}\|_{\bm{W^{1,\infty}}} + h^{k-\frac{1}{2}}\|\bm{u}\|_{\bm{H^{k+1}}} + h^{k-\frac{1}{2}}C(K^*)\right)\|f\|_{k+1,\Omega}\|\theta_f\|_{0,\mathcal{T}_h}
		\\
		&+ Ch^{k+1}\left(\|\bm{u}\|_{\bm{W^{1,\infty}}}\|f\|_{k+1,\Omega} + \left(L + \|\bm{u}\|_{\bm{L^\infty}}\right)\|f\|_{k+2,\Omega}\right)\|\theta_f\|_{0,\mathcal{T}_h}
		\\
		& + C(K^*)h^{k-\frac{3}{2}}\|\theta_f\|^2_{0,\mathcal{T}_h}
		\\
		&\leq C_1(K^*)h^{4k-1} + C_2(K^*)h^{k-\frac{3}{2}}\|\theta_f\|^2_{0,\mathcal{T}_h} + Ch^{2k+2} + C\|\theta_f\|^2_{0,\mathcal{T}_h}.
	\end{aligned}
\end{equation*}
Note the term $C_2(K^*)h^{k-\frac{3}{2}}\|\theta_f\|^2_{0,\mathcal{T}_h}$ is bounded by $C_2(K^*)\|\theta_f\|_{0,\mathcal{T}_h}^2$ for $k \geq 2$ and rest of analysis follows as in the Theorem \ref{fl2}. Hence the results of Theorem \ref{fl2} holds for $k \geq 2$ in $3$D as well.

\section{Numerical Experiments}\label{comp}
This section deals with numerical experiments using time splitting algorithm combined with proposed dG methods for the following modified Vlasov-Stokes' system: 
\begin{equation}\label{eq:continuous-models}
	\left\{
	\begin{aligned}
		\partial_t f + v\cdot \nabla_x f + \nabla_v \cdot \Big( \left( \bm{u} - v \right) f \Big) & = F(t,x,v)  \quad\mbox{ in }(0,T)\times\Omega_x\times\Omega_v,
		\\
		f(0,x,v) & = f_{0}(x,v)\qquad  \mbox{ in }\Omega_x\times\Omega_v.
	\end{aligned}
	\right.
\end{equation}
\begin{equation}\label{contstokess}
	\left\{
	\begin{aligned}
		- \Delta_x \bm{u} + \rho \bm{u} +\nabla_x p & = \rho V + G(t,x)  \quad \mbox{ in }\Omega_x,
		\\
		\nabla_x \cdot \bm{u} & = 0  \qquad  \mbox{ in }\Omega_x,
	\end{aligned}
	\right.
\end{equation}

We have used splitting algorithm for the Vlasov equation \eqref{eq:continuous-models}. To achieve this, we use the Lie-Trotter splitting. Firstly, split the equation \eqref{eq:continuous-models} as:
\begin{equation}\label{sEq:ucontstokes}
	(a)\quad	\begin{aligned}
		\partial_t f + \nabla_v\cdot\left(\left( \bm{u} - v \right)f \right) = F(t,x,v),
	\end{aligned} 
	\qquad (b) \quad \begin{aligned}
		\partial_t f + v\cdot\nabla_x f = 0.
	\end{aligned}   
\end{equation}
We solve $(a)$ of \eqref{sEq:ucontstokes} for the full time step using the given initial data $f_0$ to obtain a solution $f^*$, then $(b)$ of \eqref{sEq:ucontstokes} is solved for the full time step using $f^*$ as the initial data to obtain a solution $f$.

For a complete discretization, 
let $\{t_n\}_{n=0}^{N}$ be a uniform partition of the time interval $[0,T]$ and $t_n = n\Delta t$ with time step $\Delta t > 0$.
Let $f_h^n \in \mathcal{Z}_h, \bm{u_h}^n \in U_h, p_h^n \in P_h, \rho_h^n$ and $\rho_h^nV_h^n$ denote the approximation of $f^n = f(t_n), \bm{u}^n = \bm{u}(t_n), p^n = p(t_n), \rho^n = \rho(t_n)$ and $\rho^nV^n = \rho(t_n)V(t_n)$, respectively. 
Find $\left( \bm{u_h}^{n+1},p_h^{n+1},f_h^{n+1}\right) \in U_h \times P_h \times \mathcal{Z}_h$, for $n = 1,\cdots, N$ such that
\begin{equation}\label{fdsvusch1}
	\begin{aligned}
		a_h(\bm{u_h}^{n+1},\bm{\phi_h}) + &\,b_h(\bm{\phi_h},p_h^{n+1}) + \left(\rho_h^n\bm{u_h}^{n+1}, \bm{\phi_h}\right)
		\\
		& = \left(\rho_h^nV_h^n + G^{n+1} , \bm{\phi_h}\right) \quad \forall \quad \bm{\phi_h} \in U_h,
	\end{aligned}
\end{equation}
\begin{equation}\label{fdsvusch2}
	- b_h(\bm{u_h}^{n+1},q_h) + s_h(p_h^{n+1},q_h) = 0 \quad \forall \quad q_h \in P_h,
\end{equation}
\begin{equation}\label{fdsvusCh}
	\left(\frac{f_h^{*} - f_h^n}{\Delta t},\psi_h\right) + \mathcal{B}^x_h\left(\bm{u_h}^{n+1};f_h^n,\psi_h\right) 
	= \left(F^n,\psi_h\right) \quad \forall \quad \psi_h \in \mathcal{Z}_h,
\end{equation}
\begin{equation}\label{fdsvusBh}
	\begin{aligned}
		\left(\frac{f_h^{n+1} - f^*_h}{\Delta t}, \Psi_h\right) +\mathcal{B}^v_h(f_h^*,\Psi_h) = 0 \quad \forall \quad \Psi_h \in \mathcal{Z}_h,
	\end{aligned}
\end{equation}
 where, $ a_h(\bm{u_h}^n,\bm{\phi_h}), b_h(\bm{\phi_h},p_h^n)$ and $s_h(p^n_h,q_h)$ is defined by \eqref{aih}-\eqref{sh} at $t = t_n$. In \eqref{fdsvusCh} and \eqref{fdsvusBh} is defined as: 
 \begin{equation*}
 	\mathcal{B}^x_h(\bm{u_h}^{n+1};f_h^n,\psi_h) := \sum_{R \in \mathcal{T}_h}\mathcal{B}^x_{h,R}(\bm{u_h}^{n+1};f_h^n,\psi_h),
 \end{equation*}
 with
 \begin{equation*}\label{s-bhdef-vus}
 	\begin{aligned}
 		\mathcal{B}^x_{h,R}(\bm{u_h}^{n+1};f_h^n,\psi_{h}) & := - \int_{T^x}\int_{T^v}f_h^n\left(\bm{u_h}^{n+1} - v\right).\nabla_v\psi_h \, {\rm d}v\,{\rm d}x 
 		\\&\qquad+ \int_{T^x}\int_{\partial T^v} \reallywidehat{\left(\bm{u_h}^{n+1} - v\right)\cdot\bm{n}f_h^n}\psi_h \, {\rm d}s(v)\,{\rm d}x
 	\end{aligned}
 \end{equation*}
and 
\begin{equation*}
	\mathcal{B}^v_h(f_h^n,\Psi_h) := \sum_{R \in \mathcal{T}_h}\mathcal{B}^v_{h,R}(f_h^n,\Psi_h)
\end{equation*}
with
\begin{equation*}\label{s-chdef-vus}
	\begin{aligned}
		\mathcal{B}^v_{h,R}(f_h^n,\Psi_{h}) & :=  - \int_{T^v}\int_{T^x}f_h^n\,v.\nabla_x\Psi_{h} \, {\rm d}x\,{\rm  d}v + \int_{T^v}\int_{\partial T^x}\reallywidehat{v\cdot\bm{n}f_h^n}\Psi_{h}\, {\rm d}s(x)\,{\rm d}v
	\end{aligned}
\end{equation*}  wherein the numerical fluxes is defined by \eqref{flux} at $t = t_n$.

 For our computation, let the nodes in $\mathcal{T}_h^x$ and $\mathcal{T}_h^v$ are $\bm{x}_l$ and $\bm{v}_m$ where $l = 1, \cdots, N_x; m = 1, \cdots, N_v$, respectively. In $\mathcal{Z}_h$ space every function can be represented as $g = \sum_{l,m}g(\bm{x}_l,\bm{v}_m)L_x^l(\bm{x})L_v^m(\bm{v})$ on $R$, here $L_x^l(\bm{x})$ and $L_v^m(\bm{v})$ are the $l$-th and $m$-th Lagrangian interpolating polynomials in $T^x$ and $T^v$, respectively.  
 
 Under this setting, we can solve the equations for $f$ in the split equations $\eqref{sEq:ucontstokes} (a), (b)$ in the reduced dimensions. For example, in equation $\eqref{sEq:ucontstokes} (a)$, we fix a nodal point in $x$-direction, say $\bm{x}_l$, then solve
 \begin{equation*}
 	\begin{aligned}
 		\partial_t f(\bm{x}_l) + \nabla_v\cdot\left(\left(\bm{u}(\bm{x}_l) - v\right)f(\bm{x}_l)\right) = F(t,\bm{x}_l,v)
 	\end{aligned}
 \end{equation*}
 in $v$-direction and obtain an update of point values of $f(\bm{x}_l,\bm{v}_m)$ for all $\bm{v}_m \in \mathcal{T}^v_{h}$.
 
 Similarly, for equation $\eqref{sEq:ucontstokes} (b)$. We fix a nodal point in $v$-direction, say $\bm{v}_m$, then solve $\partial_tf(\bm{v}_m) + \bm{v}_m\cdot\nabla_x f(\bm{v}_m) = 0$ by a dG method in the $x$-direction and obtain an update of point values of $f(\bm{x}_l,\bm{v}_m)$ for all $\bm{x}_l \in \mathcal{T}^x_{h}$.

 For the plots, we use the notations $k_x$ and $k_v$ for degree of polynomials in $x$ and $v$-variables, respectively. $h_x$ and $h_v$ be the mesh sizes for $\mathcal{T}_h^x$ and $\mathcal{T}_h^v$, respectively. In the experiments, we choose $h_x = h_v = h$. The error $f - f_h, \bm{u - u_h}$ and $p - p_h$ is calculated in $L^2(\Omega), \bm{L^2}$ and $L^2(\Omega_x)$, respectively at final time $T$ and denoted by errL2f, errL2u and errL2p, respectively.

\begin{exm}\label{exm-1}
	The first example for which we test the proposed scheme. Here, the exact solution of the problem is given by
	\begin{equation*}
		\begin{aligned}
			f(t,x,y,v_1,v_2) &= \sin(\pi(x-t))\sin(\pi(y-t))e^{(-v_1^2-v_2^2)}(1-v_1^2)(1-v_2^2)(1+v_1)(1+v_2), 
			\\
			u_1(x,y) &= -\cos(2\pi x)\sin(2\pi y), \qquad u_2(x,y) = \sin(2\pi x)\cos(2\pi y),
			\\
			p(x,y) &= 2\pi\left(\cos(2\pi y) - \cos(2\pi x)\right).
		\end{aligned}
	\end{equation*} 
Note that, the initial data
\[
f(0,x,y,v_1,v_2) = \sin(\pi x)\sin(\pi y)e^{(-v_1^2-v_2^2)}(1-v_1^2)(1-v_2^2)(1+v_1)(1+v_2). 
\]
With this we calculate F and G.
\end{exm}	 

We run the simulations for the domains $\Omega_x = [0,1]^2$ and $\Omega_v = [-1,1]^2$. The penalty parameter is choose $10$ and for $k_x = 1,1$ and $k_v = 1,2$ the final time taken is $0.1$ and for $k_x = k_v = 2$ the final time is $0.01$.

\begin{figure}
	\centering
	\includegraphics[width=6.25cm]{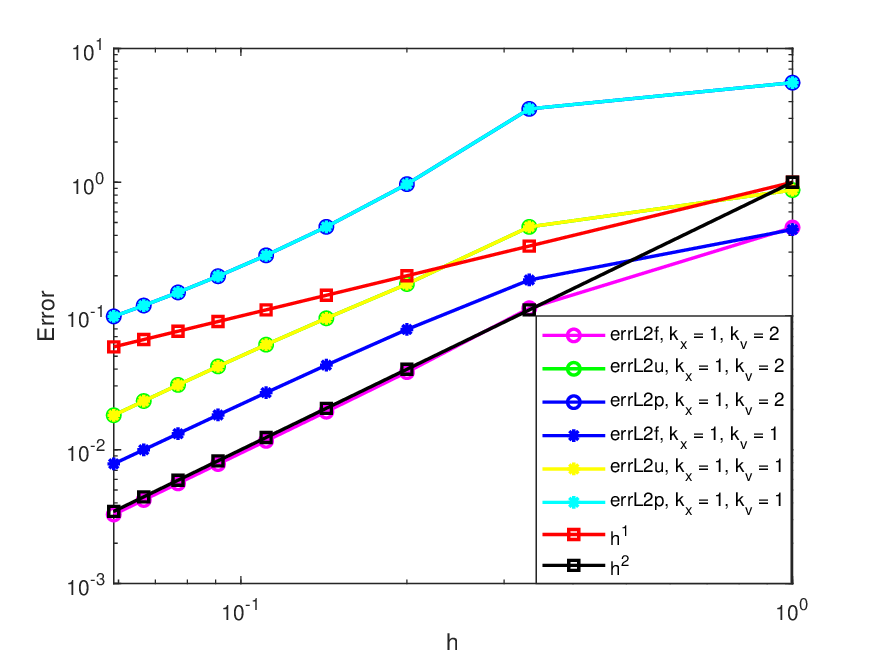}
	\includegraphics[width=6.25cm]{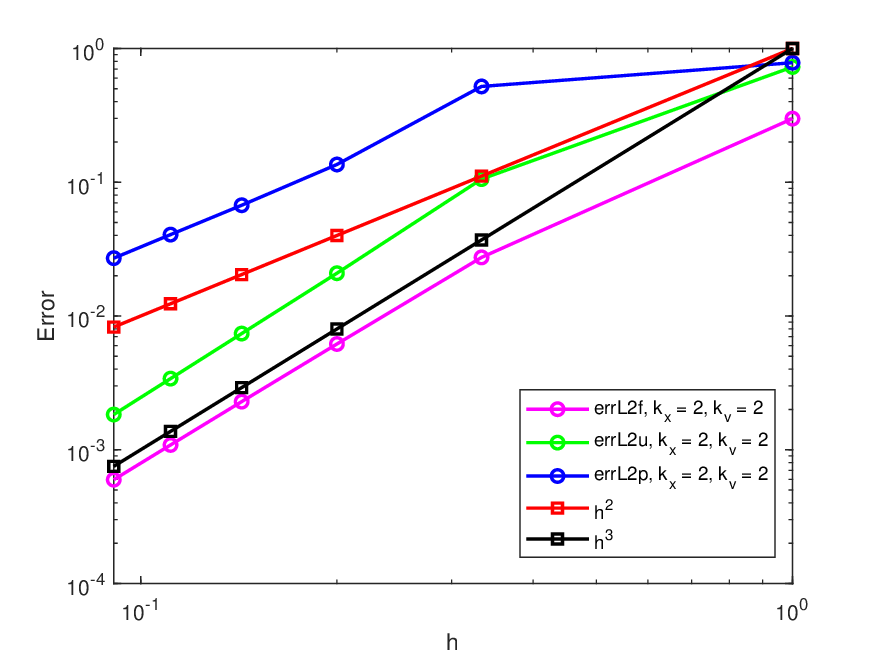}
	\caption{Convergence rate of the distribution function $f$ and the velocity $u$ for the Example \ref{exm-1};   }
	\label{fig:1}
\end{figure}

\begin{exm}\label{exm-2}
	With exact solutions of the problem given by
	\begin{equation*}
		\begin{aligned}
			f(t,x,y,v_1,v_2) &= \cos(t)\sin(2\pi x)\sin(2\pi y)e^{(-v_1^2-v_2^2)}(1-v_1^2)(1-v_2^2)(1+v_1)(1+v_2),
			\\
			u_1(x,y) &= -\cos(2\pi x)\sin(2\pi y) + \sin(2\pi y),
			\\
			u_2(x,y) &= \sin(2\pi x)\cos(2\pi y) - \sin(2\pi x),
			\\
			p(x,y) &= 2\pi\left(\cos(2\pi y) - \cos(2\pi x)\right),
		\end{aligned}
	\end{equation*} 
we calculate the initial data
\[
f(0,x,y,v_1,v_2) = \sin(2\pi x)\sin(2\pi y)e^{(-v_1^2-v_2^2)}(1-v_1^2)(1-v_2^2)(1+v_1)(1+v_2),
\] 
and the function $F$ and $G$.
\end{exm}

We run the simulations for the domains $\Omega_x = [0,1]^2$ and $\Omega_v = [-1,1]^2$. The penalty parameter is choose $10$ and for $k_x = 1,1$ and $k_v = 1,2$ the final time taken is $0.1$ and for $k_x = k_v = 2$ the final time is $0.01$.

\begin{figure}
	\centering
	\includegraphics[width=6.25cm]{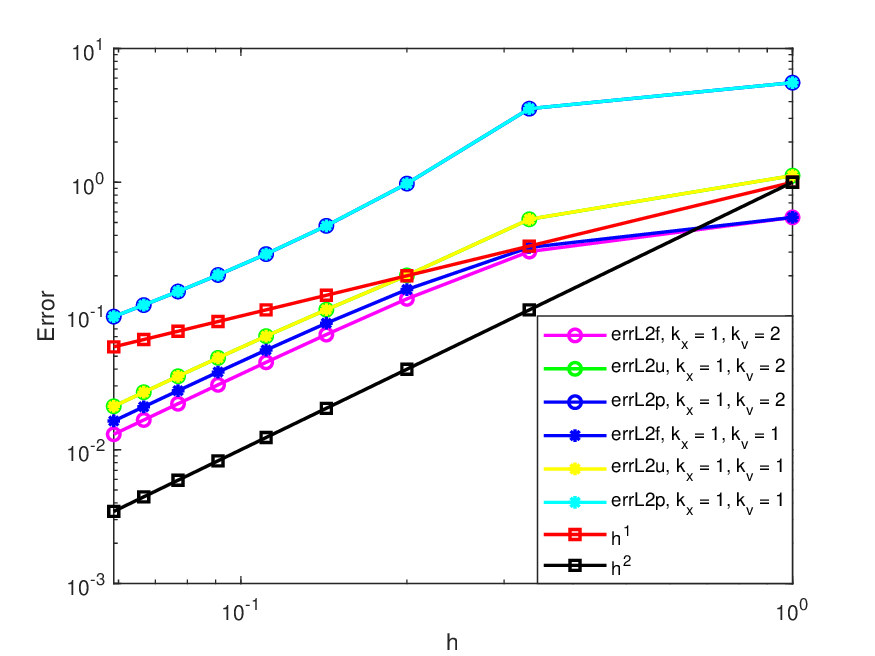}
	\includegraphics[width=6.25cm]{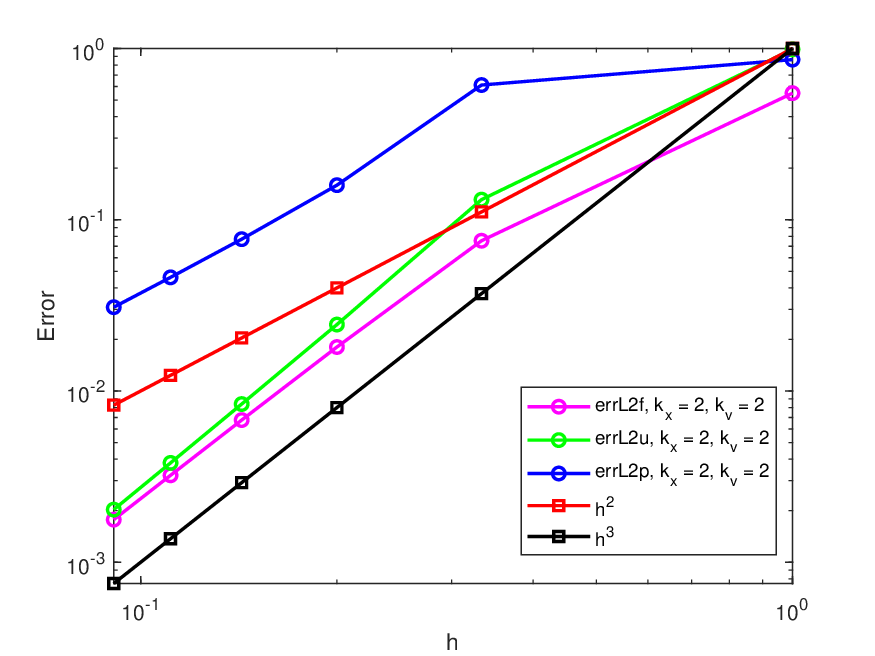}
	\caption{Convergence rate of the distribution function $f$ and the velocity $u$ for the Example \ref{exm-2};   }
	\label{fig:2}
\end{figure}

\textbf{Observations:}

\begin{itemize}
	\item From figure \ref{fig:1}(Left) and figure \ref{fig:2}(Left), we observe that by taking degree of polynomials $k_x = 1, k_v = 1$ and $k_x = 1, k_v = 2$. We obtain order of convergence for distribution function $f$ and fluid velocity $\bm{u}$ is $\min{(k_x,k_v)} + 1$ which is $2$, for pressure $p$ the order of convergence is $\min(k_x,k_v)$ which is $1$. 
	\item If we take degree of polynomials $k_x = k_v = 2$, then from figure \ref{fig:1}(Right) and figure \ref{fig:2}(Right), we can see the order of convergence for distribution function $f$ and fluid velocity $\bm{u}$ is $\min(k_x,k_v) + 1$ which is $3$, for fluid pressure $p$ the order of convergence is $\min(k_x,k_v)$ which is $2$. 
	\item  Note that it is difficult to solve the Vlasov equation in $[0,1]^2\times [-1,1]^2$ for each time $t$. But using splitting in time and with the help of a sequence of two dimensional problems, we compute the solution for each time level 
	$t = t_n$.
\end{itemize}

	\section{Conclusion}
	
This paper introduces the semi-discrete numerical method \eqref{bh}-\eqref{ch} using discontinuous Galerkin methods for the continuum problem and the conservation properties are stated in Lemma \ref{lem:distcon}. It is difficult to prove the non-negativity of the discrete density $\rho_h$. Due to this the proposed scheme is not coercive and this creates a major problem in the error estimates which is handled by a smallness condition on $h$. The optimal rate of convergence for polynomial degrees $k \geq 1$ is proved in Theorem \ref{fi}. Finally, using splitting scheme, some numerical experiments are reported.\\

	\textbf{Acknowledgements.} K.K. and H.H. thank Laurent Desvillettes for introducing them to the fluid-kinetic equations modelling the thin sprays during the Junior Trimester Program on Kinetic Theory organised at the Hausdorff Research Institute for Mathematics, Bonn. K.K. and H.H. thank the Hausdroff Institute of Mathematics, Bonn, for hosting them during the Junior Trimester program on Kinetic theory (Summer of 2019) where this work was initiated. K.K. further acknowledges the financial support of the University Grants Commission (UGC), Government of India. 
	
	\bibliography{references}
	\bibliographystyle{amsalpha}

\end{document}